\newcommand{\VECVECSIX}[6]{\left[\begin{array}{c}
			#1 \\ #2 \\#3 \\ #4 \\ #5 \\ #6 \end{array}\right]} 
\renewcommand{\epsilon}{\varepsilon}
\renewcommand{\phi}{\varphi}
\begin{document}
\title{Remarks on Exponential Stability for a Coupled System of Elasticity and Thermoelasticity with Second Sound
}

\titlerunning{Exponential Stability for a Coupled System of Elasticity and Thermoelasticity}        

\author{Manuel Rissel\thanksref{e1,addr1}
        \and
        Ya-Guang Wang\thanksref{e2,addr2} 
}

\thankstext{e1}{e-mail: manuel.rissel@sjtu.edu.cn}
\thankstext{e2}{e-mail: ygwang@sjtu.edu.cn}

\institute{School of Mathematical Sciences,	Shanghai Jiao Tong University, Shanghai, 200240, P. R. China \label{addr1}
	\and
    School of Mathematical Sciences, Center of Applied Mathematics, MOE-LSC and SHL-MAC, Shanghai Jiao Tong University, Shanghai, 200240, P. R. China \label{addr2}
}

\date{Received: date / Accepted: date}

\maketitle

\begin{abstract}
We study the large time behavior of solutions to a linear transmission problem in one space dimension. The problem at hand models a thermoelastic material with second sound confined by a purely elastic one. We shall characterize all equilibrium states of the considered system and prove that every solution approaches one designated equilibrium state with an exponential rate as time goes to infinity. Hereto, we apply methods from the theory of strongly continuous semigroups. In particular, we obtain uniform resolvent bounds for the underlying generator. This removes the largeness assumption of elastic wave speeds imposed in [Y.P. Meng and Y.G. Wang,  Anal. Appl. (Singap.) 13 (2015)] for having an exponential energy decay rate when the problem only has the trivial equilibrium. In an appendix we provide a similar exponential stability result for the case where heat conduction is modeled using Fourier's law.
\keywords{transmission problem \and thermoelasticity \and elasticity \and second sound \and exponential stability}
 \subclass{35B35,  35B40, 35M33, 47D06 }
\end{abstract}

\section{Introduction}

Many interesting applications give rise to transmission problems, which are systems of differential equations with discontinuous coefficients and transmission conditions imposed on some interfaces. In this note we are concerned with a transmission problem for a coupled system of elasticity and thermoelasticity, as illustrated in \Cref{Figure:IllustrationOfBar}. In this work, the heat conduction in thermoelasticity obeys the Cattaneo law, which transforms the classical thermoelastic system of hyperbolic-parabolic type into the thermoelastic system with second sound, a strictly hyperbolic one, cf. \cite{RackeHandbook}.  We aim to develop the  linear semigroup  theory to re-study the long time stability analysis of this problem initiated in \cite{WangMeng} via the Lyapunov argument. In particular, by the means of uniform resolvent bounds, we shall show that all equilibria are exponentially stable. While our motivation stems from the work \cite{WangMeng}, where only the model with Cattaneo's law is considered, it seems to us that the same setting but with classical Fourier's law for heat conduction has not been studied, in particular in the semigroup context, as well. For this reason, in the appendix, we also provide details on how to obtain exponential stability when Fourier's law is employed and the thermoelastic part of the bar is modeled in the classical way. The one dimensional systems investigated here serve as toy-models for the curl-free part of considerably more complicated vectorial cases and are also of interest from the perspective of control theory.

\begin{figure}[ht!]
	\centering 
	\captionsetup{justification=centering}
		\begin{tikzpicture}	
		\draw[thick,->] (0,0) -- (6.5,0);
		\draw (0,0.1) -- (0, -0.1) node[anchor=north] {$0$};
		\draw (2,0.1) -- (2, -0.1) node[anchor=north] {$L_1$};	
		\draw (5,0.1) -- (5, -0.1) node[anchor=north] {$L_2$};
		\draw (6,0.1) -- (6, -0.1) node[anchor=north] {$L_3$};
		\draw[pattern=north east lines,pattern color=black]   (0,0.2) rectangle (2,0.8);	
		\draw[pattern=dots,pattern color=black]   (2,0.2) rectangle (5,0.8);
		\draw[pattern=north east lines,pattern color=black]   (5,0.2) rectangle (6,0.8);
		\draw [fill=white] (0.75,0.3) rectangle (1.25,0.7) node[pos=.5] {E};
		\draw [fill=white] (3.25,0.3) rectangle (3.75,0.7) node[pos=.5] {TE};
		\draw [fill=white] (5.25,0.3) rectangle (5.75,0.7) node[pos=.5] {E};
		\end{tikzpicture}
		\caption{Illustration of an elastic(E)-theormoelastic(TE)-elastic bar}
		\label{Figure:IllustrationOfBar}
\end{figure}
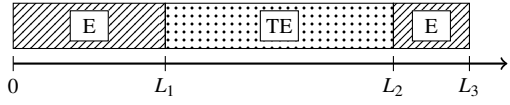

The dynamics of the elastic-thermoelastic-elastic bar under consideration, with Cattaneo's law for heat conduction, are modeled by the following set of coupled partial differential equations,
\begin{equation}\label{equation:TimeDomainSystem:eq1}
\begin{aligned}
& u_{tt} - a u_{xx} + m \theta_x  & = && 0 && \mbox{ in } && [L_1, L_2]\times\mathbb{R}_+, \\
& \theta_t + k q_x + mu_{xt} & = && 0 && \mbox{ in } && [L_1, L_2]\times\mathbb{R}_+, \\
& \tau q_t + q + k\theta_x & = && 0 && \mbox{ in } && [L_1, L_2]\times\mathbb{R}_+, \\
& v_{tt} - b v_{xx} & = && 0 && \mbox{ in } && [0, L_1] \cup [L_2, L_3] \times\mathbb{R}_+,
\end{aligned}
\end{equation}
with transmission conditions at the interfaces
\begin{equation}\label{equation:TimeDomainSystem:eq2}
\begin{aligned}
& u(L_i, \cdot) && = && v(L_i, \cdot) &&  \, && \,  && \mbox{ in } && \mathbb{R}_+\,\, && \quad (i=1,2),\\
& au_x(L_i, \cdot) && - && m \theta(L_i, \cdot) && = && bv_x(L_i, \cdot) && \mbox{ in } && \mathbb{R}_+\,\,  && \quad (i=1,2),\\
\end{aligned}
\end{equation}
and usual Dirichlet conditions for the displacement at the endpoints of the bar as well as for the heat flux at the insulated ends of the middle part,
\begin{equation}\label{equation:TimeDomainSystem:eq3}
\begin{aligned}
& q(L_1, \cdot) && = && q(L_2, \cdot) && = && 0 && \mbox{ in } && \mathbb{R}_+, && \, \\
& v(0, \cdot) && = && v(L_3, \cdot) && = && 0 && \mbox{ in } && \mathbb{R}_+, &&
\end{aligned}
\end{equation}
further complemented by initial conditions
\begin{equation}\label{equation:TimeDomainSystem:eq4}
\begin{aligned}
& u(\cdot,0)  =  u_0, ~u_t(\cdot,0) = u_1, ~\theta(\cdot, 0) = \theta_0,~ q(\cdot, 0) = q_0 && \mbox{ in } && [L_1, L_2],\\
& v(\cdot, 0)  =  v_0,~ v_t(\cdot, 0) = v_1 && \mbox{ in } &&  [0, L_1] \cup [L_2, L_3].
\end{aligned}
\end{equation}

The unknown functions to be determined are the displacement $u$, the temperature $\theta$ and the heat flux $q$ for the middle part of the bar, while $v$ is the unknown displacement of the outer parts. Moreover, $a, m, k, \tau, b > 0$ are strictly positive parameters, which are arbitrary but fixed and represent physical properties of the considered materials. The limit case $\tau = 0$ corresponds to Fourier's law of heat conduction, where $q = -k\theta_x$, regarding which we refer to \cite{RackeHandbook}, some references below and the appendix. As in  \cite{WangMeng}, we assume that the elastic speed $\sqrt{b} > 0$ is identical for the material in $[0, L_1]$ and $[L_2, L_3]$. The problem \eqref{equation:TimeDomainSystem:eq1}-\eqref{equation:TimeDomainSystem:eq4} was studied in \cite{WangMeng} by using the energy method, in which they obtained that if the initial data satisfy the constraint
\begin{equation}
\begin{aligned}\label{equation:ConditionStationarySol}
\int_{L_1}^{L_2}  \theta_0 \, dx  + m (u_0(L_2) - u_0(L_1)) = 0,
\end{aligned}
\end{equation}
and the wave speed $\sqrt{b}$ is large enough in the sense of \cite[(3.37) of Theorem 3.8]{WangMeng}, then the solution of the above problem decays exponentially to zero in the energy space when $t$ goes to infinity.

The aim of this article is to study the problem \eqref{equation:TimeDomainSystem:eq1}-\eqref{equation:TimeDomainSystem:eq4} by using a semigroup approach, and we shall deduce that the localized dissipation of the thermoelasticity in the equations 
\eqref{equation:TimeDomainSystem:eq1}-\eqref{equation:TimeDomainSystem:eq4} is strong enough to
guarantee any stationary state of the problem \eqref{equation:TimeDomainSystem:eq1}-\eqref{equation:TimeDomainSystem:eq4} being exponentially stable, and the constraint  \eqref{equation:ConditionStationarySol} being equivalent to that this problem has only the trivial equilibrium. 

Thus, in contrast to \cite{WangMeng}, there are two novelties at least in this paper, one is that by using the semigroup approach, without the largeness requirement of the wave speed $\sqrt{b}$ imposed in 
\cite{WangMeng}, we obtain that the solution of \eqref{equation:TimeDomainSystem:eq1}-\eqref{equation:TimeDomainSystem:eq4} decays to zero exponentially when the time $t$ goes to infinity under the assumption \eqref{equation:ConditionStationarySol} of the initial data, the second is that any solution of this problem goes to a steady state exponentially when $t$ goes to infinity even without the constraint \eqref{equation:ConditionStationarySol}. Besides that, it seems that exponential stability for solutions to \eqref{equation:TimeDomainSystem:eq1}-\eqref{equation:TimeDomainSystem:eq4} with $\tau = 0$ (using Fourier's law) is likewise not available in the literature up to now, in particular from a semigroup point of view. Therefore, the discussion we provide in the appendix might be of interest as well. For cases where the thermoelastic part is located in at least one end of the bar, exponential stability has been obtained in a semilinear context with Cattaneo's law by Sare, Mu\~{n}oz Rivera \& Racke \cite{RackeSareRivera} and with Fourier's law by Marzocchi, Mu\~{n}oz Rivera \&  Naso \cite{Rivera1D_threeparts}. These works both employ an energy method for treating the semilinear problem directly.

The essential difference of our approach, compared to the energy functional method used in \cite{WangMeng}, is that we work in the frequency domain and study the generator of the semigroup which governs the linear dynamics. While an energy functional approach, employed for example in \cite{Alves,Fatori,LiuChen,WangMeng,RiveraPortillo,RackeThermoelasticitySecondSound,RackeSareRivera,Bravo}, is beneficial in that it directly applies to some semilinear settings, there might appear disadvantages when necessary conditions for stability are desired or in cases where suitable Lyapunov type functionals are difficult to obtain. On the contrary, the linear semigroup approach offers several equivalent characterizations for exponential stability and some criteria for weaker forms such as polynomial or strong stability, which have also proven to be helpful for showing the lack of exponential stability, see for example \cite{Alves,Alves2,batty1,batty2,HanXu,rackeVisco,Ng_Seifert,SchnaubeltRzepnicki,waterwaves,ZhangZuazua,zuazua}. 

Regarding \eqref{equation:TimeDomainSystem:eq1}-\eqref{equation:TimeDomainSystem:eq4}, we shall observe below, see \eqref{equation:adjointFormula0}, that the generator $\mathcal{A}$ of the underlying semigroup allows a natural decomposition $\mathcal{A} = \mathcal{C} + \mathcal{E}$ with $\mathcal{C}$ skew-adjoint and $\mathcal{E}$ bounded selfadjoint. We note that therefore the dissipative effects, entering this system  only through the heatflux $q$, are rather weak. In particular, the spectrum of $\mathcal{A}$ lives in an infinite strip of thickness $\tau^{-1}$ touching the imaginary axis, see \Cref{remark:LocationOfSpectrum} and \Cref{proposition:StationarySolutions} below. This is due to the use of Cattaneo's law for modeling the thermoelastic component as a hyperbolic system in order to rule out the paradox of infinite speeds of propagation, observed for classical heat diffusion models following Fourier's law \cite{RackeHandbook}. With Fourier's law, the perturbation $\mathcal{E}$ becomes an unbounded one, introducing some parabolic features. In both cases, the nice structure of the generator relies on the natural choice of the transmission conditions \eqref{equation:TimeDomainSystem:eq2}, which guarantee a matching of the forces at the interfaces. 

For a solution $U = [u(t,x), v(t,x), \theta(t,x), q(t,x), u_t(t,x), v_t(t,x)]'$ to \eqref{equation:TimeDomainSystem:eq1} - \eqref{equation:TimeDomainSystem:eq4} the energy $U$ at time $t$ is defined by
\begin{equation}\label{equation:Energy}
\begin{aligned}
E_U(t) &:= && \frac{1}{2} \left[\int_0^{L_1} + \int_{L_2}^{L_3}\right] \left(b |v_x(t,x)|^2  + |v_t(t,x)|^2\right) \, dx \\
& && + \frac{1}{2}\int_{L_1}^{L_2}  \left( a |u_x(t,x)|^2 + |u_t(t,x)|^2 + |\theta(t,x)|^2 + \tau |q(t,x)|^2 \right) \, dx.
\end{aligned}
\end{equation}

Our main result, stated later in \Cref{theorem:MainResult}, can for the time being be formulated as below in \Cref{theorem_temp}. For the case of Fourier's law we refer to \Cref{fourier:theorem:MainResult}. 
\begin{theorem}\label{theorem_temp}
	There exist constants $M, \omega > 0$ such that, for every initial datum
	\[
		U_0 = [u_0,v_0, q_0, \theta_0, u_1, v_1]'
	\]
	with finite energy $E_{U_0}$, the corresponding finite energy solution 
	\[
		U = U(t,x) = [u(t,x), v(t,x), q(t,x), \theta(t,x)]'
	\]
	to \eqref{equation:TimeDomainSystem:eq1} - \eqref{equation:TimeDomainSystem:eq4} satsifies
	\[
	E_{U - W_0}(t) \leq  M \operatorname{e}^{-\omega t} \quad (t \geq 0),
	\]
	where $W_0$ is a stationary solution to \eqref{equation:TimeDomainSystem:eq1} - \eqref{equation:TimeDomainSystem:eq4} and uniquely determined by $U_0$.	
\end{theorem}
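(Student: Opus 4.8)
The plan is to deduce exponential stability from the Gearhart--Pr\"uss characterization, after separating off the finite set of equilibria. Let $\mathcal{H}$ be the energy space associated with \eqref{equation:Energy}, normalized so that $E_U = \tfrac12\|U\|_{\mathcal{H}}^2$, and let $\mathcal{A}=\mathcal{C}+\mathcal{E}$ be the generator, with $\mathcal{C}$ skew-adjoint and $\mathcal{E}$ bounded selfadjoint (cf.\ \eqref{equation:adjointFormula0}). Since the only dissipated quantity is the heat flux, one has $\langle\mathcal{E}U,U\rangle=-\|q\|_{L^2(L_1,L_2)}^2\le 0$, so $\mathcal{A}$ generates a contraction semigroup $(\mathcal{T}(t))_{t\ge0}$ and $\operatorname{Re}\langle\mathcal{A}U,U\rangle=\langle\mathcal{E}U,U\rangle$. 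The equilibria are precisely the elements of $\mathcal{N}:=\ker\mathcal{A}$, which I would identify using \Cref{proposition:StationarySolutions} and check to be finite dimensional; the embedding $D(\mathcal{A})\hookrightarrow\mathcal{H}$ is compact on the bounded intervals, so $\mathcal{A}$ has compact resolvent and discrete spectrum.

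Writing $P$ for the orthogonal projection onto $\mathcal{N}$, I would set $W_0:=PU_0$. The key structural fact is $\ker\mathcal{A}=\ker\mathcal{A}^*$: if $\mathcal{A}U=0$ then $\langle\mathcal{C}U,U\rangle=-\langle\mathcal{E}U,U\rangle$ is simultaneously purely imaginary and real, hence $\mathcal{E}U=0$ and $\mathcal{C}U=0$, so $\mathcal{A}^*U=(-\mathcal{C}+\mathcal{E})U=0$. Consequently both $\mathcal{N}$ and $\mathcal{N}^\perp$ reduce $\mathcal{A}$, the eigenvalue $0$ is semisimple, $\mathcal{T}(t)$ acts as the identity on $\mathcal{N}$, and $U(t)-W_0=\mathcal{T}(t)(I-P)U_0\in\mathcal{N}^\perp$. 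Since $E_{U-W_0}(t)=\tfrac12\|\mathcal{T}(t)(I-P)U_0\|_{\mathcal{H}}^2$, the theorem reduces to exponential stability of the restricted semigroup on $\mathcal{N}^\perp$ generated by $\mathcal{A}_\perp:=\mathcal{A}|_{\mathcal{N}^\perp}$. As the equilibria are parametrized by the conserved quantity $\int_{L_1}^{L_2}\theta\,dx+m(u(L_2)-u(L_1))$, the case $W_0=0$ corresponds exactly to \eqref{equation:ConditionStationarySol}, recovering the decay to zero of \cite{WangMeng}.

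By Gearhart--Pr\"uss, I must verify (i) $i\mathbb{R}\subset\rho(\mathcal{A}_\perp)$ and (ii) $\sup_{\beta\in\mathbb{R}}\|(i\beta-\mathcal{A}_\perp)^{-1}\|<\infty$. For (i), since the spectrum is discrete and $0$ has been removed, it suffices to exclude nonzero imaginary eigenvalues: solving $\mathcal{A}U=i\beta U$ with $\beta\neq0$ forces $\langle\mathcal{E}U,U\rangle=0$, hence $q\equiv0$ on $[L_1,L_2]$; feeding this back into \eqref{equation:TimeDomainSystem:eq1} yields $\theta\equiv0$ and $u\equiv0$ on the middle part, and then the transmission conditions \eqref{equation:TimeDomainSystem:eq2} provide vanishing Cauchy data for the elastic wave equation at $L_1$ and $L_2$, so that ODE uniqueness forces $v\equiv0$ and $U=0$. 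Because the resolvent is continuous in $\beta$ and bounded on compact sets, condition (ii) reduces to the high-frequency regime $|\beta|\to\infty$, and this is the heart of the matter.

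The main obstacle is the weakness and the localization of the dissipation in establishing (ii). I would argue by contradiction: if the bound fails there are $\beta_n\to\infty$ and $U_n\in\mathcal{N}^\perp$ with $\|U_n\|=1$ and $(i\beta_n-\mathcal{A})U_n=F_n\to0$; testing against $U_n$ gives $\|q_n\|_{L^2(L_1,L_2)}\to0$, which is all the dissipation directly provides. The task is then to bootstrap from $q_n\to0$, through the coupled system, to $\theta_n\to0$ and $u_n,\partial_tu_n\to0$ on $[L_1,L_2]$, and finally---via the interface terms in \eqref{equation:TimeDomainSystem:eq2}---to $v_n,\partial_tv_n\to0$ on the purely elastic parts, where no damping is present at all, contradicting $\|U_n\|=1$. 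Carrying this out requires frequency-localized multiplier (observability) estimates in which the boundary and transmission terms are controlled uniformly in $\beta_n$; obtaining such $b$-uniform bounds is precisely what removes the largeness assumption on the wave speed $\sqrt{b}$ from \cite{WangMeng}. Once (i) and (ii) are in place, Gearhart--Pr\"uss yields constants $M,\omega>0$ with $\|\mathcal{T}(t)(I-P)U_0\|_{\mathcal{H}}\le M\operatorname{e}^{-\omega t}\|(I-P)U_0\|_{\mathcal{H}}$, and squaring gives the stated energy decay of $E_{U-W_0}$.
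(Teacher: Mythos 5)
Your overall framework coincides with the paper's: generation of a contraction semigroup, the identity $\operatorname{ker}\mathcal{A}=\operatorname{ker}\mathcal{A}^*$ (which the paper derives from \eqref{equation:adjointFormula0} together with \eqref{equation:ForLocationOfSpectrum}) leading to the splitting $\mathcal{H}=\operatorname{ker}(\mathcal{A})\oplus\operatorname{range}(\mathcal{A})$ of \Cref{lemma:directsum}, restriction of $\mathcal{A}$ to the complement of the kernel, exclusion of nonzero imaginary eigenvalues (your Cauchy-data argument at the interfaces is in fact stated more explicitly than the paper's), and the contradiction set-up for the Huang--Pr\"uss resolvent condition \eqref{equation:desiredResBound}. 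Up to that point the proposal is correct and essentially identical to \Cref{Section:Wellposedness} and the opening of \Cref{Section:exponentialStability}.

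There is, however, a genuine gap at exactly the step you yourself call ``the heart of the matter''. You assert that one must bootstrap from $q_n\to0$ to $\theta_n\to0$, then to $u_n,\partial_t u_n\to0$, and finally through the interface to the elastic components, and that this ``requires frequency-localized multiplier (observability) estimates \dots controlled uniformly in $\beta_n$'' --- but you never produce these estimates, and announcing that suitable multipliers should exist is not a proof; this bootstrap is precisely the technical content of the paper. Concretely, what is missing is: (a) the duality/interpolation step $\|\theta_l\|_{L^2}^2\le\|\theta_l\|_{H^{-1}}\|\theta_l\|_{H^{1}}$, combined with Cattaneo's law to replace $\theta_{l,\,x}$ by $q_l$ and $p_l$, which upgrades the bare dissipation information $q_l\to0$ to $\theta_l\to0$ (see \eqref{equation:thetabound}); (b) the multiplier $\overline{u^1_{l,\,x}}$ applied to \eqref{eq:ResolventEqA:3}, yielding $u^1_{l,\,x}\to0$ and then, via \eqref{equation:u2bound}, $u^2_l\to0$ (\Cref{lemma:auxilliary0}); and above all (c) the specific pair of multipliers $w_1=(L_1+L_2-2x)\overline{(au^1_{l,\,x}-m\theta_l)}$ on the thermoelastic segment and $w_2=x\overline{v^1_{l,\,x}}$, $w_3=(x-L_3)\overline{v^1_{l,\,x}}$ on the elastic segments. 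The point of this choice is that the identity of \Cref{lemma:auxilliaryA} produces interface terms $B_1$ which, after rewriting by means of the transmission conditions $u^2_l(L_i)=v^2_l(L_i)$ and $au^1_{l,\,x}(L_i)-m\theta_l(L_i)=bv^1_{l,\,x}(L_i)$, dominate --- with constants depending only on the geometry, not on $l$ and not on any largeness of $b$ --- the interface terms $B_2$ that \Cref{lemma:auxilliaryB} generates for the undamped elastic energy $I_2$; this matching of boundary terms is what closes the estimate $I_2\le C I_1 + R_3$ and what removes the wave-speed assumption of the earlier literature. Without exhibiting such multipliers and verifying that all residual terms vanish as $|l|\to+\infty$, the contradiction $\|U_l\|_{\mathcal{H}}\to0$ cannot be reached, so your proposal is an accurate outline of the paper's strategy rather than a proof of \Cref{theorem_temp}.
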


The structure of the article at hand is as follows. In \Cref{Section:Wellposedness}, the well-posedness of \eqref{equation:TimeDomainSystem:eq1}-\eqref{equation:TimeDomainSystem:eq4} is settled in the framework of strongly continuous semigroups, some spectral properties of the generator are obtained and all stationary states are characterized. Then, in \Cref{Section:exponentialStability}, the uniform exponential stability result of \Cref{theorem_temp} is established. Some calculations are outsourced to \Cref{proofs}. Finally, some details regarding the classical model with Fourier's law, culminating in \Cref{fourier:theorem:MainResult}, are presented in form of an appendix.

\section{Well-posedness and stationary solutions}\label{Section:Wellposedness}
Given a linear operator $T$, its spectrum is denoted by $\sigma(T)$, the point spectrum of $T$ by $\sigma_p(T)$ and the resolvent set of $T$ by $\rho(T)$. For a complex number $\lambda$, the notation $\lambda - T$ is short for $\lambda I - T$. Furthermore $\Re \lambda$ stands for the real part of $\lambda \in \mathbb{C}$ and $\overline{\lambda}$ for its complex conjugate. We define the state space for solutions of \eqref{equation:TimeDomainSystem:eq1} - \eqref{equation:TimeDomainSystem:eq4} by
\begin{equation*}
\begin{aligned}
\mathcal{H}  & := &&   \Big\{ [u^1, v^1, \theta , q, u^2, v^2]'  \in  H^1(L_1, L_2) \times H^1((0, L_1) \cup (L_2, L_3)) \\
& &&\times L^2(L_1, L_2) \times L^2(L_1, L_2)  \times L^2(L_1, L_2) \times L^2((0, L_1) \cup (L_2, L_3)) \\
& &&\mbox{ such that } v^1(0) = v^1(L_3) = 0 \mbox{ and }  u^1(L_i) = v^1(L_i) \mbox{ for } i= 1,2   \Big\},
\end{aligned}
\end{equation*}
and introduce on $\mathcal{H}$ the inner product
\begin{equation*}
\begin{aligned}
\langle  U, \tilde{U} \rangle_{\mathcal{H}} & := && \int_{L_1}^{L_2} \left( a u^1_x \overline{\tilde{u^1_x}} + u^2 \overline{\tilde{u^2}} + \theta \overline{\tilde{\theta}} +  \tau q \overline{\tilde{q}}\right)  \, dx + \left[\int_0^{L_1} + \int_{L_2}^{L_3}\right] \left( b v^1_x \overline{\tilde{v^1}_x}  + v^2 \overline{\tilde{v^2}} \right) \, dx,
\end{aligned}
\end{equation*}
which corresponds to the energy functional $E_U(t)$ given in \eqref{equation:Energy}. Let $U := [u^1, v^1, \theta , q, u^2, v^2]'  \in \mathcal{H}$ be given and define on $[0, L_3]$ the function $w(x) := u^1(x)$ if $x \in [L_1, L_2]$, $w(x) := 	v^1(x)$ if $x \in [0, L_1] \cup [L_2, L_3]$, which is continuous by definition of $\mathcal{H}$ and belongs to $H^1_0(0, L_3)$. Then, by applying the Poincar\'e inequality to $w$, there exists a constant $C > 0$ such that
\begin{equation}\label{equation:Poincare}
\begin{aligned}
	\|u^1\|_{L^2(L_1, L_2)}^2 + \|v^1\|_{L^2((0, L_1) \cup (L_2, L_3))}^2 & = && \int_0^{L_3} |w|^2 \, dx \\
	& \leq && C \int_0^{L_3} |w_x|^2 \, dx \\
	& = && C \left( \|u^1_x\|_{L^2(L_1, L_2)}^2 + \|v^1_x\|_{L^2((0, L_1) \cup (L_2, L_3))}^2 \right),
\end{aligned}
\end{equation}
in particular, $\mathcal{H}$ together with the norm $\|\cdot\|_{\mathcal{H}}$ induced by $\langle \cdot, \cdot \rangle_{\mathcal{H}}$ is a Hilbert space.

We define the linear operator $\mathcal{A} \colon \mathcal{H} \supseteq D(\mathcal{A}) \to \mathcal{H}$ for $U =  [u^1, v^1, \theta, q, u^2, v^2]'$ via  
\[
\mathcal{A}U := \left[\begin{array}{cccccc}
0 & 0 & 0 & 0 & 1 & 0 \\ 
0 & 0 & 0 & 0 & 0 & 1\\
0 & 0 & 0 & -k \partial_x & -m\partial_x & 0 \\ 
0 & 0 & -\frac{k}{\tau}\partial_x & - \frac{1}{\tau} & 0 & 0\\
a\partial_{xx} & 0 & -m\partial_x & 0 & 0 & 0 \\ 
0 & b\partial_{xx} & 0 & 0 & 0 & 0
\end{array}\right] \VECVECSIX{u^1}{v^1}{\theta}{q}{u^2}{v^2} := \VECVECSIX{u^2}{v^2}{-(kq +mu^2)_x}{-\frac{k}{\tau}\theta_x-\frac{1}{\tau}q}{(au^1_{x} -m\theta)_x}{bv^1_{xx}},
\]
on its domain
\begin{equation*}
\begin{aligned}
	D(\mathcal{A}) := \left\{ U  \in \mathcal{H}  \,\, \left| \right. \,\, \mathcal{A}U \in \mathcal{H}, \, au^1_x(L_i) - m\theta(L_i) = bv^1_x(L_i), \, q(L_i) = 0, \, i = 1,2   \right\},
\end{aligned}
\end{equation*}
which can explicitly be described by
\begin{equation*}
\begin{aligned}
D(\mathcal{A}) & = && \Big\{ U  \in \mathcal{H} \cap \big[ H^2(L_1, L_2) \times H^2((0, L_1) \cup (L_2, L_3)) \times H^1(L_1, L_2)  \times H^1(L_1, L_2) \\
& && \times H^1(L_1, L_2) \times H^1((0, L_1) \cup (L_2, L_3)) \big]  \,\, \Big| \,\,  u^2(L_i) = v^2(L_i), \\
& &&  v^2(0) = v^2(L_3) = 0, \, au^1_x(L_i) - m\theta(L_i) = bv^1_x(L_i), \, q(L_i) = 0, \, i = 1,2   \Big\}.
\end{aligned}
\end{equation*}
In order to verify this equality, note that $\mathcal{A}U \in \mathcal{H}$ implies $u^2 \in H^1(L_1, L_2), v^2 \in H^1((0, L_1) \cup (L_2, L_3))$ and $u^2(L_i) = v^2(L_i), i= 1,2$. Moreover, $kq + mu^2 =: z \in H^1(L_1, L_2)$, hence $q = k^{-1}(z-mu^2) \in H^1(L_1, L_2)$. Now, one can read from $\mathcal{A}U \in \mathcal{H}$ that $\theta \in H^1(L_1, L_2)$ and $au^1_x - m \theta =: \tilde{z} \in H^1(L_1, L_2)$, which imply together that $u^1_x = a^{-1} (\tilde{z} - m \theta) \in H^1(L_1, L_2)$, hence $u^1 \in H^2(L_1, L_2)$. Finally, one can directly see that $v^1 \in H^2((0, L_1) \cup (L_2, L_3))$. In particular the pointwise boundary conditions are justified thanks to the Sobolev embedding $H^1(I) \hookrightarrow C(I)$ for every open $I \subseteq \mathbb{R}$. The other inclusion follows directly.

Through $u^2 = u_t$ and $v^2 = v_t$, the original equations \eqref{equation:TimeDomainSystem:eq1} - \eqref{equation:TimeDomainSystem:eq4} can be transformed into the following Cauchy problem in an abstract form
\begin{equation}\label{equation:Cauchyproblem}
\begin{aligned}
& \frac{d}{dt}U(t) & = && & \mathcal{A} U(t), && t \geq 0, \\
& U(0) & = && & U_0 \in \mathcal{H}. &&
\end{aligned}
\end{equation}

We introduce, according to  \cite[Page 145f]{EngelNagel}, two solution concepts for this Cauchy problem.  Consider a $\mathcal{H}$-valued function $U \colon \mathbb{R}_+ \to \mathcal{H}$. If $U \in C^0([0, \infty); D(\mathcal{A})) \cap C^1([0, \infty); \mathcal{H})$ and $U$ satisfies \eqref{equation:Cauchyproblem} in $\mathcal{H}$, then $U$ is said to be a classical solution to \eqref{equation:Cauchyproblem}. If $U \in C^0([0, \infty); \mathcal{H})$ with $\int_0^t U(s) \, ds \in D(\mathcal{A})$ for all $t \geq 0$ and
\[
U(t) = \mathcal{A} \int_0^t U(s) \, ds + U_0,
\]
then $U$ is called a mild solution to \eqref{equation:Cauchyproblem}. Assume that $\mathcal{A}$ generates a strongly continuous semigroup $(T(t))_{t \geq 0}$ in $\mathcal{H}$. Due to the definition of $D(\mathcal{A})$, the existence of a classical solutions is equivalent to $U_0 \in D(\mathcal{A})$. For $U_0 \in D(\mathcal{A})$, the unique classical solution to \eqref{equation:Cauchyproblem} is then given by $U(t) = T(t)U_0$. If, more general, $U_0 \in \mathcal{H}$, then the unique mild solution is given again by $U(t) = T(t)U_0$. The latter statements can be found in \cite[6.2 Proposition, 6.4 Proposition]{EngelNagel}. Regarding the correspondence of solutions to \eqref{equation:Cauchyproblem} and of solutions to \eqref{equation:TimeDomainSystem:eq1} - \eqref{equation:TimeDomainSystem:eq4} see also the discussion in \cite{LiuZheng}.

\begin{proposition}\label{proposition:Semigroup}
	The operator $\mathcal{A}$ generates a strongly continuous contraction semigroup $(T(t))_{t \geq 0}$ on $\mathcal{H}$. 
\end{proposition}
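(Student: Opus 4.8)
The plan is to invoke the Lumer--Phillips theorem, which requires three ingredients: that $D(\mathcal{A})$ is dense in $\mathcal{H}$, that $\mathcal{A}$ is dissipative, and that $\lambda - \mathcal{A}$ is surjective for some (equivalently all) $\lambda > 0$. Density is immediate from the explicit description of $D(\mathcal{A})$ as $\mathcal{H}$ intersected with a product of $H^2$ and $H^1$ spaces subject to finitely many pointwise constraints, which is dense in $\mathcal{H}$; indeed $D(\mathcal{A})$ already contains all tuples whose components are smooth and supported away from the interfaces and endpoints. The contraction property in the conclusion will then be precisely the dissipativity estimate.

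For dissipativity, I would compute $\operatorname{Re}\langle\mathcal{A}U,U\rangle_{\mathcal{H}}$ for $U\in D(\mathcal{A})$ by pairing the six components of $\mathcal{A}U$ against the inner product and integrating by parts. Pairing the displacement equations against their velocities yields terms such as $a\int_{L_1}^{L_2}\big((u^2)_x\overline{u^1_x}-u^1_x\overline{(u^2)_x}\big)\,dx$, and analogously for $v$ and for the $\theta$--$q$ coupling; each such term is purely imaginary and drops out of the real part. The only genuinely dissipative contribution is $-\tfrac1\tau q$ paired (with weight $\tau$) against $q$, producing $-\int_{L_1}^{L_2}|q|^2\,dx$. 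The structure must be used precisely in the boundary terms: integration by parts generates $[(au^1_x-m\theta)\overline{u^2}]_{L_1}^{L_2}$ from the middle part, $[bv^1_x\overline{v^2}]$ over $(0,L_1)\cup(L_2,L_3)$ from the outer parts, and a term $-[kq\overline{\theta}]_{L_1}^{L_2}$ that vanishes because $q(L_i)=0$. The remaining boundary contributions cancel exactly after inserting the transmission conditions $au^1_x(L_i)-m\theta(L_i)=bv^1_x(L_i)$ and $u^2(L_i)=v^2(L_i)$ together with the Dirichlet conditions $v^2(0)=v^2(L_3)=0$. Hence $\operatorname{Re}\langle\mathcal{A}U,U\rangle_{\mathcal{H}}=-\int_{L_1}^{L_2}|q|^2\,dx\le0$.

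For the range condition I would fix $\lambda>0$ and, given $F=[f^1,g^1,h,p,f^2,g^2]'\in\mathcal{H}$, solve $(\lambda-\mathcal{A})U=F$. The first, second and fourth component equations allow elimination of the velocities and the flux, giving $u^2=\lambda u^1-f^1$, $v^2=\lambda v^1-g^1$ and $q=(\tau p-k\theta_x)/(\lambda\tau+1)$, which reduces the system to a coupled second-order problem for $(u^1,v^1,\theta)$ carrying the transmission conditions at $L_1,L_2$ and the endpoint Dirichlet/insulation conditions. I would recast this as a sesquilinear form on the closed subspace of $H^1(L_1,L_2)\times H^1((0,L_1)\cup(L_2,L_3))\times H^1(L_1,L_2)$ encoding the continuity $u^1(L_i)=v^1(L_i)$ and the endpoint Dirichlet conditions, and apply the Lax--Milgram theorem. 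Boundedness is routine; coercivity follows from the dissipativity structure already established — testing against the solution, the $m$- and $k$-coupling cross terms are purely imaginary as above and drop out, leaving positive definite contributions $\lambda^2\|u^1\|^2 + a\|u^1_x\|^2 + \dots$ plus the nonnegative $\int|q|^2$ — together with the Poincaré inequality \eqref{equation:Poincare}, which controls the lower-order displacement norms by the $H^1$-seminorms on this constrained space. Elliptic regularity then promotes the variational solution to an element of $D(\mathcal{A})$, with the transmission conditions recovered as the natural boundary conditions of the weak formulation, and undoing the eliminations returns all six components of $U$.

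I expect the range condition, rather than dissipativity, to be the main obstacle. The delicate point is choosing the variational space so that the force-matching transmission conditions appear as \emph{natural} boundary conditions compatible with the inner product of $\mathcal{H}$, and then verifying coercivity uniformly on that space despite the interface coupling between the elastic and thermoelastic parts. Once surjectivity and dissipativity are in hand, the Lumer--Phillips theorem yields that $\mathcal{A}$ generates a strongly continuous contraction semigroup $(T(t))_{t\ge0}$ on $\mathcal{H}$.
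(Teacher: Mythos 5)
Your overall strategy --- the classical Lumer--Phillips theorem (density $+$ dissipativity $+$ range condition via Lax--Milgram) --- is viable, but your density argument contains a genuine error. You claim density is ``immediate'' because $D(\mathcal{A})$ contains all smooth tuples supported away from the interfaces and endpoints. That set is \emph{not} dense in $\mathcal{H}$: by the Poincar\'e inequality \eqref{equation:Poincare}, the $\mathcal{H}$-norm controls the full $H^1$-norms of the first two components, so the point evaluations $U \mapsto u^1(L_i)$ are continuous functionals on $\mathcal{H}$; consequently the closure of your set lies inside $\{U \in \mathcal{H} \,:\, u^1(L_1)=u^1(L_2)=0\}$, which excludes, for instance, the equilibrium $[\zeta^1,\zeta^2,\zeta^3,0,0,0]'$ of \Cref{proposition:StationarySolutions}. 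This is precisely why the paper's proof devotes its first (and longest) step to density: one needs approximating sequences whose first two components \emph{match} the interface values of the target, which the paper arranges by introducing a cutoff $\gamma \in C^{\infty}_0(0,L_3)$ equal to $u^1(L_i)$ near $L_i$ and approximating the remainder $w^1-\gamma$ by $C^\infty_0$ functions in $H^1$. Your density claim is true, but the justification you give for it fails and must be replaced by such a construction.

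Beyond that, your route genuinely differs from the paper's and is otherwise workable. The paper never verifies the range condition: it computes $\mathcal{A}^*$ explicitly, deduces closedness of $\mathcal{A}$ from the identity $\mathcal{A}+E_\tau=-\mathcal{A}^*-E_\tau$ in \eqref{equation:adjointFormula0}, shows that both $\mathcal{A}$ and $\mathcal{A}^*$ are dissipative, and then invokes the corollary of the Lumer--Phillips theorem for densely defined closed operators with dissipative adjoint. You instead keep only the dissipativity of $\mathcal{A}$ (your computation agrees with \eqref{equation:dissipation}) and solve $(\lambda-\mathcal{A})U=F$ by elimination and Lax--Milgram; this trades the adjoint computation for an elliptic solvability argument and yields closedness for free. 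One detail there needs repair: with the pairing as you describe it, the coupling terms are $-m\int_{L_1}^{L_2}\theta\,\overline{u^1_x}\,dx$ (from the $u$-equation tested against $u^1$) and $+\lambda m\int_{L_1}^{L_2} u^1_x\,\overline{\theta}\,dx$ (from the $\theta$-equation, after eliminating $u^2=\lambda u^1-f^1$, tested against $\theta$), and their sum has real part $(\lambda-1)\,m\,\Re\int_{L_1}^{L_2}u^1_x\overline{\theta}\,dx$, which is purely imaginary only when $\lambda=1$. Either fix $\lambda=1$ (a single $\lambda>0$ suffices for Lumer--Phillips) or weight the $u$- and $v$-equations by $\lambda$, i.e.\ test them against the eliminated velocities; then the cancellation you invoke holds for every $\lambda>0$ and coercivity follows as you describe.
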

\begin{proof}
	\begin{enumerate}[1)]
		\item We start by showing that $D(\mathcal{A})$ is dense in $\mathcal{H}$ and hereto arbitrarily choose $U := [u^1, v^1, \theta, q, u^2, v^2]' \in \mathcal{H}$. Then it is left to construct a sequence in $D(\mathcal{A})$ which converges to $U$ in $\mathcal{H}$. First, we define on $[0, L_3]$ the function
		\[ w^1(x) := \begin{cases} 
		u^1(x) & x \in [L_1, L_2], \\
		v^1(x) & x \in [0, L_1] \cup [L_2, L_3],
		\end{cases}
		\]
		where $w^1 \in H^1_0(0, L_3)$ by definition of $\mathcal{H}$. Moreover, let $\gamma \in C^{\infty}_0(0,L_3)$ be a smooth function, which satisfies
		\[
		\gamma(x) := \begin{cases} 
		0 & x \in [0, \frac{L_1}{4}) \cup (L_3-\frac{L_3-L_2}{4}, L_3],\\
		u^1(L_1) & x \in (L_1 - \frac{L_1}{4}, L_1 + \frac{L_2-L_1}{4}), \\
		u^1(L_2) & x \in (L_2 - \frac{L_2-L_1}{4}, L_2+\frac{L_3-L_2}{4}).
		\end{cases}
		\]
		In particular it holds $\gamma(L_i) = u^1(L_i)$, $i=1,2$ and $\gamma_x(x) = 0$ for $x \in \{0, L_1, L_2, L_3\}$. Employing the density of $C^{\infty}_0(I)$ in $H^1_0(I)$ for open $I \subseteq \mathbb{R}$, there exist sequences $(\alpha_n)_{n \in \mathbb{N}} \in C^{\infty}_0((0, L_1)\cup(L_2,L_3)), (\beta_n)_{n \in \mathbb{N}} \in C^{\infty}_0(L_1, L_2)$, such that for $n \to +\infty$
		\begin{equation*}
		\begin{aligned}
			\alpha_n & \to && (w^1 - \gamma)_{|_{(0, L_1)\cup(L_2,L_3)}} & \mbox{ in } & H^1((0, L_1)\cup(L_2,L_3)),\\
			\beta_n & \to && (w^1 - \gamma)_{|_{(L_1,L_2)}} & \mbox{ in } & H^1(L_1, L_2).
		\end{aligned}
		\end{equation*}
		Now define for each $n \in \mathbb{N}$,
		\begin{equation*}
		\begin{aligned}
		v^1_n(x) & := && \alpha_n(x) + \gamma(x) \quad (x \in [0, L_1]\cup[L_2,L_3]),\\
		u^1_n(x) & := && \beta_n(x) + \gamma(x) \quad (x \in [L_1, L_2]),
		\end{aligned}
		\end{equation*}
		which satisfy for $n \to +\infty$ that $v^1_n \to v^1$ in $H^1((0, L_1)\cup(L_2,L_3))$ as well as $u^1_n \to u^1$ in $H^1(L_1, L_2)$. Moreover, for each $n \in \mathbb{N}$, ones has $u^1_n(L_i) = v^1_n(L_i)$ and $u^1_x(L_i) = v^1_x(L_i) = 0$, $i=1,2$. Finally, with the help of the dense inclusion $C^{\infty}_0(I) \subseteq L^2(I)$ for open $I \subseteq \mathbb{R}$, one can choose 
		\begin{equation*}
		\begin{aligned}
		(\theta_n)_{n \in \mathbb{N}}, (q_n)_{n \in \mathbb{N}}, (u^2_n)_{n \in \mathbb{N}} & \subseteq && C^{\infty}_0(L_1, L_2),\\
		 (v^2_n)_{n \in \mathbb{N}} & \subseteq && C^{\infty}_0((0, L_1)\cup(L_2,L_3)),
		\end{aligned}
		\end{equation*}
		such that for $n \to +\infty$ one has $\theta_n \to \theta$ in $L^2(L_1, L_2)$, $q_n \to q$ in $L^2(L_1, L_2)$, $u^2_n \to u^2$ in $L^2(L_1, L_2)$ and $v^2_n \to v^2$ in $L^2((0, L_1)\cup(L_2, L_3))$. By construction, it holds that
		\[
			U_n := [u^1_n, v^1_n, \theta_n, q_n, u^2_n, v^2_n]' \in D(\mathcal{A})
		\]
		for all $n \in \mathbb{N}$ and $U_n \to U$ in $\mathcal{H}$ as  $n \to +\infty$. Thus, $D(\mathcal{A})$ is dense in $\mathcal{H}$.
		\item A straight forward calculation, employing the boundary and transmission conditions, reveals that the $\mathcal{H}$-adjoint $\mathcal{A}^*$ of $\mathcal{A}$ satisfies $D(\mathcal{A}^*) = D(\mathcal{A})$ and is given by
		\[
		\mathcal{A}^* U = \left[\begin{array}{cccccc}
		0 & 0 & 0 & 0 & -1 & 0 \\ 
		0 & 0 & 0 & 0 & 0 & -1 \\
		0 & 0 & 0 & k \partial_x & m\partial_x & 0 \\ 
		0 & 0 & \frac{k}{\tau} \partial_x & - \frac{1}{\tau} & 0 & 0\\
		-a\partial_{xx} & 0 & m\partial_x & 0 & 0 & 0 \\ 
		0 & -b\partial_{xx} & 0 & 0 & 0 & 0
		\end{array}\right] \VECVECSIX{u^1}{v^1}{\theta}{q}{u^2}{v^2} = \VECVECSIX{-u^2}{-v^2}{(kq + mu^2)_x }{-\frac{1}{\tau}q + \frac{k}{\tau} \theta_x}{-(au^1_{x} -m\theta)_x}{-bv^1_{xx}}. 	
		\]	
		We observe that
		\begin{equation}\label{equation:adjointFormula0}
		\begin{aligned}
		\mathcal{A} + E_{\tau} = - \mathcal{A}^* - E_{\tau},
		\end{aligned}
		\end{equation}
		where $E_{\tau}$ denotes the linear bounded self-adjoint operator $\mathcal{H} \to \mathcal{H}$, defined by
		\begin{equation}\label{equation:adjointFormula}
		\begin{aligned}
		E_{\tau}\colon [u^1, v^1, \theta, q, u^2, v^2]' \mapsto \tau^{-1} [0, 0, 0, q, 0, 0]'.
		\end{aligned}
		\end{equation}
		In particular, one can verify that $(\mathcal{A}^*)^* = \mathcal{A}$ either by using directly the definition of the adjoint and integration by parts, or by alternatively employing \eqref{equation:adjointFormula0} and the fact that $E_{\tau}$ is bounded, which leads to
		\[
		(\mathcal{A}^*)^* = (- \mathcal{A} - 2 E_{\tau})^* = - \mathcal{A}^* - 2 E_{\tau}^* = \mathcal{A}.
		\]
		Thus, $\mathcal{A}$ is the $\mathcal{H}$-adjoint of a densely defined operator in $\mathcal{H}$ and therefore closed by the standard theory for unbounded operators in Hilbert spaces.
		\item By using the boundary and transmission conditions, one obtains for $U \in D(\mathcal{A})$, 
		\begin{equation}\label{equation:dissipation}
		\begin{aligned}
		\Re \langle \mathcal{A}U, U \rangle_{\mathcal{H}} & = &&  \Re \Bigg\{  \int_{L_1}^{L_2} (a u^2_x \overline{u^1_x}  - k q_x \overline{\theta } - m  u^2_x \overline{\theta} - k  \theta_x \overline{q} -  q \overline{q} + a u^1_{xx} \overline{u^2} - m \theta_x \overline{u^2}) \, dx \\
		& && + \left[ \int_0^{L_1} + \int_{L_2}^{L_3} \right]   \left(b v^2_x \overline{v^1_x} +   b v^1_{xx} \overline{v^2}\right) \, dx  \Bigg\}\\
		& = && - \int_{L_1}^{L_2} q \overline{q} \, dx \\
		& \leq && 0.
		\end{aligned}
		\end{equation}
		In the same way $\mathcal{A}^*$ satisfies
		\[
		\Re \langle \mathcal{A}^*U, U \rangle_{\mathcal{H}}  =  - \int_{L_1}^{L_2} q \overline{q} \, dx  \leq  0
		\]
		for every $U \in D(\mathcal{A}^*) = D(\mathcal{A})$ as well and therefore, because $\mathcal{H}$ is a Hilbert space, $\mathcal{A}$ and $\mathcal{A}^*$ are both dissipative. Hence, by a corollary of the Lumer \& Phillips Theorem, cf. \cite[3.17 Corollary]{EngelNagel}, $\mathcal{A}$ generates a strongly continuous contraction semigroup. \qed
	\end{enumerate}
\end{proof}
Due to being the generator of a strongly continuous contraction semigroup, $\mathcal{A}$ satisfies $\{\lambda \in \mathbb{C} \, | \,  \Re \lambda >0 \} \subseteq \rho(\mathcal{A})$ and thanks to the compact embedding $D(\mathcal{A}) \hookrightarrow \mathcal{H}$, $\sigma(\mathcal{A})$ consists only of eigenvalues. We refer to \cite[1.19 Corollary]{EngelNagel} and \cite[4.25 Proposition]{EngelNagel}.

\begin{remark}\label{remark:LocationOfSpectrum} 
	By using the definition of the inner product in $\mathcal{H}$ one can observe from \eqref{equation:dissipation} that
	\begin{equation}\label{equation:ForLocationOfSpectrum}
	\begin{aligned}
	(1+\tau \omega) \int_{L_1}^{L_2} |q|^2 \, dx & = &&  	-\omega \Bigg\{ \int_{L_1}^{L_2} \left( a |u^1_x|^2 + |\theta|^2 + |u^2|^2 \right) \, dx \\
	& && + \left[ \int_0^{L_1} + \int_{L_2}^{L_3} \right] \left( b |v^1_x|^2  + |v^2|^2 \right) \, dx \Bigg\}
	\end{aligned}
	\end{equation}
	for arbitrary $0 \neq U = [u^1, v^1, \theta, q, u^2, v^2]' \in D(\mathcal{A})$ such that $(\omega + il - \mathcal{A})U = 0$ with $\omega, l \in \mathbb{R}$. Indeed,
	\[
		0 = \Re \langle (\omega + il - \mathcal{A})U, U \rangle_{\mathcal{H}} =  \Re \langle (\omega - \mathcal{A})U, U \rangle_{\mathcal{H}} = \omega \langle U, U \rangle_{\mathcal{H}} + \int_{L_1}^{L_2} q \overline{q} \, dx,
	\]
	which is exactly \eqref{equation:ForLocationOfSpectrum} when the inner product $\langle \cdot, \cdot \rangle_{\mathcal{H}}$ is written out according to its definition. Now assume $\omega < - \tau^{-1}$ would hold, then there would be two cases: 1) The left hand side of \eqref{equation:ForLocationOfSpectrum} is negative but then the right hand side has either the sign of $-\omega$, which is positive due to $- \omega > \tau^{-1} > 0$, or the right hand side is zero; 2) The left hand side is zero, which is only possible if $q = 0$, but because of $U \neq 0$ the right hand side is now strictly positive. Both cases lead to a contradiction and hence no eigenvalue $\omega + il$ of $\mathcal{A}$ can satisfy $\omega < - \tau^{-1}$. In a similar way one could derive that $\omega > 0$ is not possible, but this has already been observed above and follows directly from the Hille \& Yosida theorem. One can conclude that the spectrum of $\mathcal{A}$ is contained in the infinite strip
	\[
	\{ \lambda \in \mathbb{C} \, | \,  -\tau^{-1} \leq \Re \lambda \leq 0 \}.
	\]
\end{remark}

The next remark shall be frequently used in the sequel.
\begin{remark}\label{remark:q_estimate}
	If $U \in D(\mathcal{A})$, $l \in \mathbb{R}$ and $F \in \mathcal{H}$ such that $(il - \mathcal{A})U = F$ in $\mathcal{H}$. Then by \eqref{equation:dissipation} and Cauchy's inequality
	\[
	\int_{L_1}^{L_2} q \overline{q} \, dx = - \Re \langle \mathcal{A} U, U \rangle_{\mathcal{H}} = \Re \langle (il - \mathcal{A)} U, U \rangle_{\mathcal{H}} =  \Re \langle F, U \rangle_{\mathcal{H}} \leq \|U\|_{\mathcal{H}} \|F\|_{\mathcal{H}}.
	\]
\end{remark}

\begin{proposition}\label{proposition:StationarySolutions} One has $0 \in \sigma(\mathcal{A})$ and the stationary solutions to \eqref{equation:Cauchyproblem} are characterized by 
	\[
	\operatorname{ker}(\mathcal{A}) = \operatorname{span}_{\mathcal{H}}\left\{ [\zeta^1, \zeta^2, \zeta^3, 0, 0, 0]' \right\},
	\]
	where
	\begin{equation*}
	\begin{aligned}
	\zeta^1(x) & := && \frac{L_2-L_1-L_3}{L_1 L_3}x+1,\\
	\zeta^2(x) & := && \begin{cases} 
		\frac{L_2-L_1}{L_1L_3}x & x \in [0, L_1], \\
		\frac{L_2-L_1}{L_1L_3}x - \frac{L_2-L_1}{L_1} & x \in [L_2, L_3],
	\end{cases}&& \\
	\zeta^3(x) & := && \frac{a(L_2-L_1-L_3) - b(L_2 -L_1)}{m L_1 L_3}.
	\end{aligned}
	\end{equation*} 
	Moreover, the imaginary axis without the origin is included in $\rho(\mathcal{A})$, hence 
	\[
	i \mathbb{R} \cap \sigma(\mathcal{A}) = \{0\}.
	\]
\end{proposition}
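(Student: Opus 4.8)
The proposition splits into a kernel computation, which in particular yields $0 \in \sigma(\mathcal{A})$, and the exclusion of nonzero purely imaginary eigenvalues. For the first part, the plan is to solve the stationary system $\mathcal{A}U = 0$ directly, reading off the six rows componentwise. One obtains immediately $u^2 = v^2 = 0$; then $k q_x = 0$ together with $q(L_i) = 0$ forces $q \equiv 0$; the fourth row gives $\theta_x = 0$, so $\theta$ is a constant; the third and fifth rows then say $au^1_x - m\theta$ is constant, hence $u^1$ is affine; and the last row gives $v^1_{xx} = 0$, so $v^1$ is affine on each of $[0,L_1]$ and $[L_2,L_3]$.

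At this stage everything is determined by finitely many constants, and I would impose the remaining data: the Dirichlet conditions $v^1(0) = v^1(L_3) = 0$, the continuity $u^1(L_i) = v^1(L_i)$, and the force matching $au^1_x(L_i) - m\theta(L_i) = bv^1_x(L_i)$. Since $u^1_x$ and $\theta$ are constant, the two force-matching equations force the two slopes of $v^1$ to coincide; the resulting linear system has exactly one free parameter (an overall scale), so $\ker(\mathcal{A})$ is one-dimensional. Solving it and normalizing reproduces the stated $\zeta^1, \zeta^2, \zeta^3$; this is a routine but slightly tedious linear-algebra verification. In particular $\ker(\mathcal{A}) \neq \{0\}$, whence $0 \in \sigma_p(\mathcal{A}) \subseteq \sigma(\mathcal{A})$.

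For the second part I would use that $\sigma(\mathcal{A})$ consists only of eigenvalues, so it suffices to show that $(il - \mathcal{A})U = 0$ with $U \in D(\mathcal{A})$ and $l \in \mathbb{R}\setminus\{0\}$ forces $U = 0$. The starting point is the dissipation identity: applying \Cref{remark:q_estimate} with $F = 0$ gives $\int_{L_1}^{L_2} |q|^2\,dx = \Re\langle(il-\mathcal{A})U,U\rangle_{\mathcal{H}} = 0$, hence $q \equiv 0$. Feeding $q = 0$ back into the rows of $\mathcal{A}U = ilU$ then propagates: the fourth row gives $\theta_x = 0$; using $u^2 = il u^1$ in the third row and dividing by $il$ gives $\theta = -m u^1_x$, so $u^1$ is affine; and the fifth row, in which $au^1_x - m\theta = (a+m^2)u^1_x$ is constant, reduces to $-l^2 u^1 = 0$, whence $u^1 \equiv 0$ and therefore $\theta = 0$, $u^2 = 0$.

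The decisive and slightly subtle step is transferring this vanishing to the purely elastic part $v^1$, which solves the second-order ODE $bv^1_{xx} = -l^2 v^1$ on each outer interval and about which the dissipation provides no direct information. The point I would emphasize is that the transmission conditions supply \emph{complete} Cauchy data at each interface: continuity $u^1(L_i) = v^1(L_i)$ with $u^1 \equiv 0$ yields $v^1(L_i) = 0$, while the force matching $au^1_x(L_i) - m\theta(L_i) = bv^1_x(L_i)$ with $u^1_x = 0$, $\theta = 0$ yields $v^1_x(L_i) = 0$. Thus $v^1$ and $v^1_x$ both vanish at $L_1$ (respectively $L_2$), and uniqueness for the initial value problem of the ODE forces $v^1 \equiv 0$ on each outer interval, so $v^2 = il v^1 = 0$ and $U = 0$. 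I expect this overdetermination at the interfaces to be the crux: although the dissipation acts only through $q$ in the middle, the conservative elastic energy cannot sustain a nonzero standing mode because the middle part, once forced to vanish, clamps both the position and the stress of $v^1$ at the interfaces—so no separate multiplier or observability estimate on $[0,L_1]\cup[L_2,L_3]$ is required. The hypothesis $l \neq 0$ enters precisely where we divide by $il$ and conclude $u^1 \equiv 0$; at $l = 0$ this step fails, consistently with the nontrivial kernel found above.
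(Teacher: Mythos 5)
Your proposal is correct, and its skeleton (solve the stationary ODE system for the kernel; use dissipation to kill $q$ and then propagate through the equations for the imaginary axis) matches the paper's. But you diverge in two places, and both divergences are worth noting. First, in the kernel computation you obtain $q\equiv 0$ elementarily from $q_x=0$ plus the boundary condition $q(L_i)=0$, whereas the paper invokes the dissipation identity \eqref{equation:ForLocationOfSpectrum} with $\omega=0$; your route is more direct, and your count of free parameters (force matching forces the two outer slopes to coincide, leaving one overall scale) reproduces the paper's $\zeta^1,\zeta^2,\zeta^3$ correctly. Second, and more importantly, your treatment of the purely elastic part for $\lambda=il$, $l\neq 0$, is actually \emph{more complete} than the paper's. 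The paper argues that $v^1(L_1)=v^1(L_2)=0$ together with the last equation of \eqref{eq:ResolventEquationsImaginaryAxis} forces $v^1\equiv 0$, and then feeds this into \eqref{equation:estResImag}; but Dirichlet data alone at the endpoints of each outer interval does \emph{not} suffice, since $bv^1_{xx}=-l^2v^1$ admits nontrivial sine modes whenever $l$ is resonant (e.g.\ $l=n\pi\sqrt{b}/L_1$), and for such modes \eqref{equation:estResImag} is also satisfied identically. Your observation that the force-matching transmission condition additionally yields the Neumann data $v^1_x(L_i)=0$ --- so that $v^1$ has complete vanishing Cauchy data at each interface and ODE uniqueness applies --- is exactly what is needed to close this step for \emph{all} $l\neq 0$; it is implicit in the paper's phrase ``the respective transmission conditions'' but not spelled out, and your version should be regarded as the rigorous form of the argument. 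A side benefit of your route is that the integrated identity \eqref{equation:estResImag} and the final Poincar\'e step become unnecessary.
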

\begin{proof}
	\begin{enumerate}[1)]
		\item Let $U = \alpha [\zeta^1, \zeta^2, \zeta^3 , 0, 0, 0]'$ with $\alpha \in \mathbb{C}\neq \{0\}$ and $\zeta^1, \zeta^2, \zeta^3$ as defined above. Then it holds $\mathcal{A}U = 0$ directly by definition of $\mathcal{A}$ as long as $U \in D(\mathcal{A})$. Hence, it is left to verify that $U \in D(\mathcal{A})$, which follows from
		\begin{equation*}
		\begin{aligned}
		\zeta^1(L_1) & = && \frac{L_2 - L_1 -L_3}{L_3} + \frac{L_3}{L_3} = \frac{L_2 - L_1}{L_3} = \zeta^2(L_1),\\
		\zeta^1(L_2) & = && \frac{L_2 - L_1 -L_3}{L_1L_3}L_2 + \frac{L_1}{L_1} = \frac{L_2 - L_1}{L_1L_3}L_2 - \frac{L_2-L_1}{L_1} = \zeta^2(L_2),\\
		a \zeta^1_x(L_i) - m \zeta^3(L_i) & = && a\frac{L_2-L_1-L_3}{L_1L_3} - \frac{a(L_2 - L_1 -L_3) - b(L_2 - L_1)}{L_1 L_3} \\
		& = && \frac{b(L_2 - L_1)}{L_1 L_3} = b\zeta^2_x(L_i) \quad (i = 1,2),\\
		0 & = && \zeta^2(0) = \zeta^2(L_3). 	   
		\end{aligned} 
		\end{equation*}
		This shows that $0 \in \sigma(\mathcal{A})$ and  $\operatorname{span}_{\mathcal{H}}\left\{[\zeta^1, \zeta^2, \zeta^3 , 0, 0, 0]'\right\} \subseteq \operatorname{ker}(\mathcal{A})$.
		\item Let $U = [u^1, v^1, \theta , q, u^2, v^2]' \in D(\mathcal{A})$ be an eigenvector with respect to $\lambda := 0 \in \sigma(\mathcal{A})$, hence
		\begin{equation*}
		\begin{aligned}
		& u^2 & = && 0 && \mbox{ in } & H^1(L_1, L_2),\\
		& v^2 & = && 0 && \mbox{ in } & H^1((0, L_1) \cup (L_2, L_3)),\\
		& kq_{x} & = && 0 && \mbox{ in } & L^2(L_1, L_2),\\
		&  q + k \theta_{x} & = && 0 && \mbox{ in } & L^2(L_1, L_2), \\
		& - au^1_{xx} + m \theta_{x} & = && 0 && \mbox{ in } & L^2(L_1, L_2), \\
		& - bv^1_{xx} & = && 0 && \mbox{ in } & L^2((0, L_1) \cup (L_2, L_3)).
		\end{aligned}
		\end{equation*}
		We remind that the Poincar\'e inequality argument in \eqref{equation:Poincare} implies that $\langle \cdot, \cdot \rangle_{\mathcal{H}}$ induces the norm $\|\cdot\|_{\mathcal{H}}$ in $\mathcal{H}$ and therefore implicitly justifies that the first two equations hold in $H^1(L_1, L_2)$ and $H^1((0, L_1) \cup (L_2, L_3))$ respectively.
		Inserting $\omega = 0$ into \eqref{equation:ForLocationOfSpectrum} shows that $q = 0$ and furthermore $\theta = \operatorname{const}$ as well as $u^2 = v^2 = 0$. Moreover, one can obtain that $u^1$ and $v^1$ are with constants $\alpha, \beta, c, d ,e, f \in \mathbb{C}$ determined by
		\begin{equation*}
		\begin{aligned}
		u^1(x) & = && \alpha x + \beta \quad (x \in [L_1, L_2]),\\
		v^1(x) & = && (cx + d) \chi_{[0, L_1]}(x) + (ex + f) \chi_{[L_2, L_3]}(x) \quad (x \in [0, L_1] \cup [L_2, L_3]).
		\end{aligned} 
		\end{equation*}
		One can now insert the unknowns $\alpha, \beta, l, c, d, e, f \in \mathbb{C}$ into the boundary and transmission conditions in order to obtain $[u^1, v^1, \theta, q, u^2, v^2]' = \gamma [\zeta^1, \zeta^2, \zeta^3, 0, 0, 0]'$ with $\gamma \in \mathbb{C}$. We omit these calculations but point out that $d = 0$ and, after establishing a triangular form, it is left to solve the algebraic system
		\[
		\left[\begin{array}{cccccc}
		a & 0 & -m & -b & 0 & 0 \\ 
		0 & 1 & 0 & \frac{L_1L_2}{L_1-L_2} & -\frac{L_1L_2}{L_1-L_2} & - \frac{L_1}{L_1-L_2} \\
		0 & 0 & \frac{mL_1}{a} & L_1(\frac{b}{a} - 1) - \frac{L_1L_2}{L_1-L_2} & \frac{L_1L_2}{L_1-L_2} & \frac{L_1}{L_1-L_2} \\ 
		0 & 0 & 0 & 1 & -1 & 0\\
		0 & 0 & 0 & 0 & L_3 & 1 \\ 
		0 & 0 & 0 & 0 & 0 & 0
		\end{array}\right] \VECVECSIX{\alpha}{\beta}{\theta}{c}{e}{f} = \VECVECSIX{0}{0}{0}{0}{0}{0}. 	
		\]
		\item As observed earlier, $\sigma(\mathcal{A})$ consists only of eigenvalues. Assume now that $\lambda = il$ for $l \in \mathbb{R}\setminus\{0\}$ is an eigenvalue of $\mathcal{A}$ with corresponding eigenvector $U = [u^1, v^1, \theta, q, u^2, v^2]' \in D(\mathcal{A})$, which therefore has to obey the equations
		\begin{equation}\label{eq:ResolventEquationsImaginaryAxis}
		\begin{aligned}
		& il u^1 - u^2 & = && 0 && \mbox{ in } & H^1(L_1,L_2),\\
		& il v^1 - v^2 & = && 0 && \mbox{ in } & H^1((0, L_1) \cup (L_2, L_3)),\\
		& il \theta + kq_{x} + ilmu^1_{x} & = && 0 && \mbox{ in } & L^2(L_1,L_2),\\
		& (1 + \tau il) q + k \theta_{x} & = && 0 && \mbox{ in } & L^2(L_1,L_2), \\
		& (il)^2 u^1 - au^1_{xx} + m \theta_{x} & = && 0 && \mbox{ in } & L^2(L_1,L_2), \\
		& (il)^2 v^1 - bv^1_{xx} & = && 0 && \mbox{ in } & L^2((0, L_1) \cup (L_2, L_3)).
		\end{aligned}
		\end{equation}
		Due to $l \neq 0$ one can divide the third and fourth equation in \eqref{eq:ResolventEquationsImaginaryAxis} by $il$ and additionally multiplying the last four equations in \eqref{eq:ResolventEquationsImaginaryAxis} with $\overline{\theta}, \overline{q}, \overline{u^1}, \overline{v^1}$ as well as integrating over $[L_1,L_2]$ and $[0, L_1] \cup [L_2, L_3]$ respectively, results in
		\begin{equation}\label{equation:estResImag}
		\begin{aligned}
		0 & = &&  \|\theta\|^2_{L^2(L_1, L_2)} - l^2 \|u^1\|^2_{L^2(L_1, L_2)} + a \|u^1_x\|^2_{L^2(L_1, L_2)} \\
		& && -l^2 \|v^1\|^2_{L^2((0, L_1) \cup (L_2, L_3))} + b \|v^1_x\|^2_{L^2((0, L_1) \cup (L_2, L_3))}.
		\end{aligned}
		\end{equation}
		Above, $q = 0$ in $L^2(L_1, L_2)$ has been deduced from \eqref{equation:ForLocationOfSpectrum} and since $q$ is continuous by embedding, $q_x$ vanishes as well. One can further infer that $\theta$ is constant due to $\theta_x = 0$, which comes from the fourth equation, and obtain together with $q_x = 0$ that $u^1_x$ equals to a constant in $L^2(L_1, L_2)$. Because of $u^1 \in H^2(L_1, L_2)$, one knows that $u^1_x$ is continuous and therefore everywhere equal to a constant, hence $u^1_{xx} = 0$ in $L^2(L_1, L_2)$. Plugging this into the fifth equation in \eqref{eq:ResolventEquationsImaginaryAxis}, one obtains $u^1 = 0$. Finally, at the interface, $v(L_1) = v(L_2) = 0$ holds due to the respective transmission conditions and as a result the last equation in \eqref{eq:ResolventEquationsImaginaryAxis} can only be satisfied by $v^1 \equiv 0$. Putting $\|u^1\|_{L^2(L_1, L_2)} = \|v^1\|_{L^2((0,L_1) \cup (L_2,L_3))} = 0$ into \eqref{equation:estResImag}, and using the Poincar\'e inequality in the fashion of \eqref{equation:Poincare}, implies $U = 0$ in $\mathcal{H}$, which is a contradiction to the assumption that $\lambda = il$ is an eigenvalue of $\mathcal{A}$.\qed
	\end{enumerate}
\end{proof}

\begin{lemma}\label{lemma:directsum}
	$\mathcal{H} = \operatorname{ker}(\mathcal{A}) \oplus \operatorname{range}(\mathcal{A})$.
\end{lemma}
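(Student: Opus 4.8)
The plan is to deduce the claimed splitting from the standard orthogonal decomposition $\mathcal{H} = \operatorname{ker}(\mathcal{A}^*) \oplus \overline{\operatorname{range}(\mathcal{A})}$, which is valid for any densely defined closed operator on a Hilbert space (and $\mathcal{A}$ is densely defined and closed by \Cref{proposition:Semigroup}). To turn this into the desired statement I need two ingredients: first, that $\operatorname{ker}(\mathcal{A}^*) = \operatorname{ker}(\mathcal{A})$, so the kernel on the right is the one I want; second, that $\operatorname{range}(\mathcal{A})$ is already closed, so the closure may be dropped. Granting both, the decomposition reads $\mathcal{H} = \operatorname{ker}(\mathcal{A}) \oplus \operatorname{range}(\mathcal{A})$, and the sum is automatically direct, since $\operatorname{ker}(\mathcal{A}) = \operatorname{ker}(\mathcal{A}^*) = \operatorname{range}(\mathcal{A})^{\perp}$ meets the closed subspace $\operatorname{range}(\mathcal{A})$ only in $\{0\}$.

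To identify the two kernels I would argue directly from the dissipativity. If $U$ lies in $\operatorname{ker}(\mathcal{A})$ or in $\operatorname{ker}(\mathcal{A}^*)$, then the dissipation identity \eqref{equation:dissipation}, respectively its counterpart for $\mathcal{A}^*$ recorded in the proof of \Cref{proposition:Semigroup}, gives $0 = \Re \langle \mathcal{A}U, U \rangle_{\mathcal{H}} = -\int_{L_1}^{L_2} |q|^2 \, dx$, whence $q = 0$ and therefore $E_{\tau} U = 0$ by \eqref{equation:adjointFormula}. Inserting $E_{\tau}U = 0$ into the relation \eqref{equation:adjointFormula0} yields $\mathcal{A}U = -\mathcal{A}^* U$ on such elements, so $\mathcal{A}U = 0$ if and only if $\mathcal{A}^* U = 0$. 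Since $D(\mathcal{A}^*) = D(\mathcal{A})$, this shows $\operatorname{ker}(\mathcal{A}^*) = \operatorname{ker}(\mathcal{A})$, which is moreover consistent with the explicit one-dimensional kernel of \Cref{proposition:StationarySolutions}, all of whose elements indeed have vanishing heat flux.

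For the closedness of $\operatorname{range}(\mathcal{A})$ I would exploit that $\mathcal{A}$ has compact resolvent. Fixing any $\mu \in \rho(\mathcal{A})$, for instance $\mu = 1$ since $\{\lambda \in \mathbb{C} \,|\, \Re \lambda > 0\} \subseteq \rho(\mathcal{A})$, the resolvent $R_{\mu} := (\mu - \mathcal{A})^{-1}$ is a compact bijection of $\mathcal{H}$ onto $D(\mathcal{A})$, thanks to the compact embedding $D(\mathcal{A}) \hookrightarrow \mathcal{H}$ noted after \Cref{proposition:Semigroup}. As $R_{\mu}$ maps $\mathcal{H}$ onto $D(\mathcal{A})$, one has $\operatorname{range}(\mathcal{A}) = \operatorname{range}(\mathcal{A} R_{\mu})$, and the bounded operator $\mathcal{A} R_{\mu} = \mu R_{\mu} - I$ is a compact perturbation of $-I$, hence Fredholm of index zero by the Riesz--Schauder theory; in particular its range is closed. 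Therefore $\operatorname{range}(\mathcal{A})$ is closed, and combining this with the preceding paragraph finishes the argument.

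The conceptual core, and the one point that uses the specific structure of the problem rather than soft functional analysis, is the identity $\operatorname{ker}(\mathcal{A}^*) = \operatorname{ker}(\mathcal{A})$: it rests on the facts that the dissipation enters only through $q$ and that $\mathcal{A}$ differs from $-\mathcal{A}^*$ merely by the bounded selfadjoint $E_{\tau}$, both encoded in \eqref{equation:adjointFormula0} and \eqref{equation:dissipation}. The closed-range step, by contrast, is a routine consequence of the compactness of the resolvent, so I expect the kernel identification to be the only place demanding care.
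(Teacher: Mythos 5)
Your proof is correct and its core is the same as the paper's: both identify $\operatorname{ker}(\mathcal{A}) = \operatorname{ker}(\mathcal{A}^*)$ by noting that the dissipation enters only through $q$ (so any element of either kernel satisfies $q = 0$, hence $E_{\tau}U = 0$) and then invoking the structure relation \eqref{equation:adjointFormula0}, and both conclude via the standard identity $\operatorname{ker}(\mathcal{A}^*) = \operatorname{range}(\mathcal{A})^{\perp}$ for the densely defined closed operator $\mathcal{A}$. The one place you go beyond the paper is the closedness of $\operatorname{range}(\mathcal{A})$: the paper's proof passes tacitly from $\mathcal{H} = \operatorname{ker}(\mathcal{A}^*) \oplus \overline{\operatorname{range}(\mathcal{A})}$ to the stated decomposition without the closure, whereas your compact-resolvent argument --- $\operatorname{range}(\mathcal{A}) = \operatorname{range}\bigl(\mathcal{A}(\mu - \mathcal{A})^{-1}\bigr)$ with $\mathcal{A}(\mu - \mathcal{A})^{-1} = \mu(\mu - \mathcal{A})^{-1} - I$ a compact perturbation of $-I$, hence Fredholm of index zero with closed range --- is a correct and worthwhile completion of exactly the point the paper leaves implicit.
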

\begin{proof}
	The equality \eqref{equation:ForLocationOfSpectrum} likewise holds for $\mathcal{A}^*$. Therefore, if $U \in \operatorname{ker}(A)\cup\operatorname{ker}(A^*)$, then $\|q\|_{L^2(L_1, L_2)} = 0$. Hence, by observing the formula for $\mathcal{A}^*$ in the proof of \Cref{proposition:Semigroup} it becomes evident that $\operatorname{ker}(\mathcal{A}) = \operatorname{ker}(\mathcal{A}^*)$. Indeed, $\mathcal{A} + E_{\tau} = - \mathcal{A}^* - E_{\tau}$, where the operator $E_{\tau}\colon \mathcal{H} \to \mathcal{H}$ was defined above in \eqref{equation:adjointFormula}. It is known that, $\mathcal{A}$ being densely defined, $\operatorname{ker}(\mathcal{A}^*) = \operatorname{range}(\mathcal{A})^{\perp}$. Inserting $\operatorname{ker}(\mathcal{A}) = \operatorname{ker}(\mathcal{A}^*)$ concludes the proof. \qed
\end{proof}

\section{Uniform exponential stability}\label{Section:exponentialStability}
In this section we prove the uniform exponential stability for \eqref{equation:TimeDomainSystem:eq1} - \eqref{equation:TimeDomainSystem:eq4}. We adopt hereto the approach from \cite{LiuZheng} and recall the following theorem from the theory of operator semigroups, see Huang \cite{Huang} and Prüss \cite{Pruess}, using here the same formulation as in \cite[Theorem 1.3.1]{LiuZheng}.
\begin{theorem}\label{theorem:quotedCharacterisationExpStab}
	Let $S(t) = \operatornamewithlimits{e}^{At}$ be a strongly continuous semigroup of contractions on a Hilbert space $H$. Then $S(t)$ is exponentially stable if and only if
	\begin{equation}\label{equation:desired}
	\begin{aligned}
	i \mathbb{R} := \{ i l \, | \, l \in \mathbb{R} \} \subseteq \rho(A)
	\end{aligned} 
	\end{equation}
	and
	\begin{equation}\label{equation:desiredResBound}
	\begin{aligned}
	\limsup\limits_{|l| \to +\infty} \|(il - A)^{-1} \|_{L(H)} < +\infty
	\end{aligned} 
	\end{equation}
	hold.
\end{theorem}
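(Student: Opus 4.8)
As this is the classical theorem of \cite{Huang,Pruess}, I would only sketch the strategy, proving the two implications separately; the reverse (sufficiency) direction carries all the weight.

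For \emph{necessity}, assume $S(t) = \operatorname{e}^{At}$ is exponentially stable, say $\|S(t)\|_{L(H)} \leq M\operatorname{e}^{-\omega t}$ with $\omega > 0$. Then the growth bound is negative, so the whole closed right half-plane lies in $\rho(A)$ and, for $\Re\lambda \geq 0$, the resolvent is given by the Laplace transform $(\lambda - A)^{-1} = \int_0^\infty \operatorname{e}^{-\lambda t} S(t)\,dt$. Estimating under the integral, $\|(\lambda - A)^{-1}\|_{L(H)} \leq \int_0^\infty \operatorname{e}^{-\Re\lambda\, t}\|S(t)\|_{L(H)}\,dt \leq M/\omega$ uniformly on $\{\Re\lambda \geq 0\}$; restricting to $\lambda = il$ yields both \eqref{equation:desired} and \eqref{equation:desiredResBound}.

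For \emph{sufficiency} I would proceed in three steps. First, from $i\mathbb{R} \subseteq \rho(A)$ together with $K_0 := \sup_{l\in\mathbb{R}} \|(il-A)^{-1}\|_{L(H)} < \infty$, a Neumann series around each point $il$ extends the resolvent and its uniform bound to a vertical strip $\{|\Re\lambda| < \eta\}$ with $\eta := 1/(2K_0)$, giving $\sup_{|\Re\lambda|\leq\eta}\|(\lambda-A)^{-1}\|_{L(H)} \leq 2K_0$. Second, and this is where the Hilbert-space structure is indispensable, I would fix $x \in H$, set $g(t) = S(t)x$ for $t \geq 0$ and $g(t) = 0$ otherwise, and apply Plancherel's theorem in $L^2(\mathbb{R};H)$ to $\operatorname{e}^{-\epsilon t}g$, whose Fourier transform is exactly $l \mapsto (\epsilon + il - A)^{-1}x$. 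This produces, for $\epsilon > 0$, the identity
\[
\int_0^\infty \operatorname{e}^{-2\epsilon t}\|S(t)x\|_{\mathcal{H}}^2\,dt = \frac{1}{2\pi}\int_{\mathbb{R}} \|(\epsilon + il - A)^{-1}x\|_{\mathcal{H}}^2\,dl.
\]
The task is then to bound the right-hand side \emph{uniformly in} $\epsilon$ by $K\|x\|_{\mathcal{H}}^2$ and let $\epsilon \downarrow 0$ by monotone convergence, which yields $S(\cdot)x \in L^2(\mathbb{R}_+;H)$ with $\int_0^\infty \|S(t)x\|_{\mathcal{H}}^2\,dt \leq K\|x\|_{\mathcal{H}}^2$. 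Third, contractivity converts this into a decay rate: for $t > 0$ and $0 \leq s \leq t$ one has $\|S(t)x\|_{\mathcal{H}} = \|S(t-s)S(s)x\|_{\mathcal{H}} \leq \|S(s)x\|_{\mathcal{H}}$, so integrating in $s$ gives $t\|S(t)x\|_{\mathcal{H}}^2 \leq \int_0^t \|S(s)x\|_{\mathcal{H}}^2\,ds \leq K\|x\|_{\mathcal{H}}^2$; hence $\|S(t)\|_{L(H)} \leq \sqrt{K/t}$, which is strictly less than $1$ for $t$ large, and the semigroup property upgrades this to $\|S(t)\|_{L(H)} \leq M\operatorname{e}^{-\omega t}$.

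I expect the second step, the uniform-in-$\epsilon$ control of the vertical resolvent integral, to be the main obstacle: the pointwise strip bound alone does not make $\|(\epsilon + il - A)^{-1}x\|_{\mathcal{H}}$ square-integrable in $l$ uniformly, and the missing decay has to be extracted from the resolvent bound on $i\mathbb{R}$ itself, for instance by also invoking the analogous Plancherel identity for the adjoint contraction semigroup generated by $A^*$, whose resolvent satisfies the same bound since $\|(il - A^*)^{-1}\|_{L(H)} = \|(-il - A)^{-1}\|_{L(H)} \leq K_0$, and pairing the two. This Hilbert-space estimate is precisely what has no analogue on general Banach spaces, where \eqref{equation:desired}--\eqref{equation:desiredResBound} fail to imply exponential stability.
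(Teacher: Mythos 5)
First, a point of reference: the paper does not prove this theorem at all. It is quoted (in the formulation of Theorem 1.3.1 of Liu--Zheng) from Huang and Pr\"uss, so your attempt can only be compared with the classical arguments, not with anything in the paper. Against that standard, your necessity direction and steps 1 and 3 of sufficiency are correct: the Laplace-transform estimate $\|(\lambda-A)^{-1}\|\le M/\omega$ on $\{\Re\lambda\ge 0\}$, the Neumann-series extension of the uniform bound to a strip $|\Re\lambda|\le\eta$, and the conversion of $\int_0^\infty\|S(t)x\|^2\,dt\le K\|x\|^2$ into exponential decay via contractivity and the semigroup property all go through as written.

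The genuine gap is exactly where you place it, in step 2, and the repair you propose does not close it. Pairing the Plancherel identity for $S(t)$ with the one for the adjoint semigroup, using only the bounds available before the uniform resolvent bound is exploited (namely $\|(\epsilon+il-A)^{-1}\|\le 1/\epsilon$ from contractivity), yields
\begin{equation*}
t\operatorname{e}^{-\epsilon t}\left|\langle S(t)x,y\rangle\right| \le \frac{1}{2\pi}\left(\int_{\mathbb{R}}\|(\epsilon+il-A)^{-1}x\|^2\,dl\right)^{\nicefrac12}\left(\int_{\mathbb{R}}\|(\epsilon-il-A^*)^{-1}y\|^2\,dl\right)^{\nicefrac12} \le \frac{\|x\|\,\|y\|}{2\epsilon},
\end{equation*}
which is vacuous: optimizing $\epsilon=1/t$ gives $\|S(t)\|\le \operatorname{e}/2$, weaker than contractivity itself. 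The missing ingredient is elementary but indispensable: the resolvent identity. For $0<\epsilon\le 1$ one has $(\epsilon+il-A)^{-1}=\left[I+(1-\epsilon)(\epsilon+il-A)^{-1}\right](1+il-A)^{-1}$, and your step 1 (combined with $\|(\lambda-A)^{-1}\|\le 1/\Re\lambda$ for $\Re\lambda>\eta$) gives $\sup_{\epsilon>0,\,l\in\mathbb{R}}\|(\epsilon+il-A)^{-1}\|\le C_0:=2K_0$, hence $\|(\epsilon+il-A)^{-1}x\|\le(1+C_0)\|(1+il-A)^{-1}x\|$ uniformly in $\epsilon$. On the fixed line $\Re\lambda=1$, Plancherel plus contractivity give $\int_{\mathbb{R}}\|(1+il-A)^{-1}x\|^2\,dl=2\pi\int_0^\infty\operatorname{e}^{-2t}\|S(t)x\|^2\,dt\le\pi\|x\|^2$, so the vertical integrals on the lines $\Re\lambda=\epsilon$ are bounded by $(1+C_0)^2\pi\|x\|^2$ uniformly in $\epsilon\in(0,1]$; monotone convergence in your Plancherel identity then yields $\int_0^\infty\|S(t)x\|^2\,dt\le \tfrac12(1+C_0)^2\|x\|^2$, and your step 3 finishes the proof with no adjoint needed. (If you prefer the adjoint-pairing route, it does work, but only after this same resolvent-identity transfer is applied to both factors; it then gives $t\|S(t)\|\le(1+C_0)^2/2$ directly.)
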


In order to deal with the non-trivial kernel of $\mathcal{A}$, we introduce the operator $\mathcal{A}_0$, which is the restriction of $\mathcal{A}$ to $\operatornamewithlimits{range}(\mathcal{A})$ with domain $D(\mathcal{A}_0) = D(\mathcal{A}) \cap \operatornamewithlimits{range}(\mathcal{A})$. By \Cref{lemma:directsum} one has $\sigma(\mathcal{A}_0) = \sigma(\mathcal{A})\setminus\{0\}$, which can be seen in the following way: First take $\lambda \in \sigma(\mathcal{A}_0)$. Because of \Cref{lemma:directsum}, namely $\mathcal{H} = \operatorname{ker}(\mathcal{A}) \oplus \operatorname{range}(\mathcal{A})$, and the definition of $\mathcal{A}_0$, it is $\lambda \neq 0$. Moreover eigenvectors of the restriction $\mathcal{A}_0$ are automatically eigenvectors of $\mathcal{A}$ and overall one has $\lambda \in \sigma(\mathcal{A})\setminus\{0\}$. For the other inclusion take $\lambda \in \sigma(\mathcal{A})\setminus\{0\}$ and arbitrarily fix an eigenvector $U \in D(\mathcal{A})$ corresponding to $\lambda$. One can uniquely decompose $U = U_k + U_i$ where $U_k \in \operatorname{ker}(\mathcal{A})$ and $U_i \in \operatorname{range}(\mathcal{A})$ and as a result
\[
	0 = (\lambda - \mathcal{A})U = (\lambda - \mathcal{A})(U_k + U_i) = \lambda (U_k + U_i) - \mathcal{A} U_i,
\]
hence $U =  \mathcal{A}(\lambda^{-1}U_i)$, which implies $U \in \operatorname{range}(\mathcal{A})$. Therefore, $\lambda \in \sigma(\mathcal{A}_0)$ holds.

The goal for now is to prove that $\mathcal{A}_0$ satisfies \eqref{equation:desired} and \eqref{equation:desiredResBound}. From \Cref{proposition:StationarySolutions} it is already known that $i \mathbb{R} \subseteq \rho(\mathcal{A}_0)$. In order to verify \eqref{equation:desiredResBound}, we argue via contradiction. Hereto, we assume now that \eqref{equation:desiredResBound} would not hold. Then, by taking the negation of \eqref{equation:desiredResBound}, there would exist a sequence $(l_n)_{n \in \mathbb{N}} \subseteq \mathbb{R} \setminus \{0\}$ with $|l_n| \to +\infty$, as $n \to +\infty$, and a sequence $(U_n)_{n \in \mathbb{N}} \subseteq D(\mathcal{A}_0)$ with $\|U_n\|_{\mathcal{H}} = 1$ for all $n \in \mathbb{N}$ such that
\[
\| (i l_n - \mathcal{A}_0) U_n \|_{\mathcal{H}} \to 0, \quad n \to +\infty.
\]
Then the plan is to conclude that this would imply $(U_n)_{n \in \mathbb{N}} \to 0$ in $\mathcal{H}$ as  $n \to \infty$, which would contradict to $\|U_n\|_{\mathcal{H}} = 1$ for all $n \in \mathbb{N}$ and prove that actually \eqref{equation:desiredResBound} must be true. We shall carry out this plan now.

For simplicity of the notation, the index $n \in \mathbb{N}$ is dropped and $l$ is written instead of $l_n$ as well as $U_l$ instead of $U_n$. Moreover we denote the components $U_l = [u^1_l, v^1_l, \theta_l , q_l, u^2_l, v^2_l]'$ as usual and set $F_l = [f^1_l, g^1_l, h_l, p_l, f^2_l, g^2_l]' := (il - \mathcal{A}_0)U_l$, where we also omit the index $n$. Derivatives are denoted with a separating comma, e.g. $\partial_x u^2_l = u^2_{l,\, x}$.  Thus, for $|l| \to +\infty$,
\begin{subequations}\label{equation:RessolventDecaySystem}
	\begin{align}
	& il u^1_l - u^2_l & = && f^1_l &&  \longrightarrow 0 &\quad \mbox{ in } & H^1(L_1,L_2),\label{eq:ResolventEqA:1}\\
	& il v^1_l - v^2_l & = && g^1_l && \longrightarrow 0 & \quad \mbox{ in } & H^1((0,L_1) \cup (L_2,L_3)),\label{eq:ResolventEqA:2}\\
	& il \theta_l + kq_{l,\, x} + mu^2_{l,\, x} & = && h_l && \longrightarrow 0 &\quad \mbox{ in }&  L^2(L_1,L_2),\label{eq:ResolventEqA:3}\\
	& (1 + \tau il) q_l + k \theta_{l,\, x} & = && \tau p_l && \longrightarrow 0 &\quad \mbox{ in }  &L^2(L_1,L_2),\label{eq:ResolventEqA:4} \\
	& il u^2_l - au^1_{l,\, xx} + m \theta_{l,\, x} & = && f^2_l && \longrightarrow 0 & \quad\mbox{ in }&  L^2(L_1,L_2),\label{eq:ResolventEqA:5} \\
	& il v^2_l - bv^1_{l,\, xx} & = && g^2_l && \longrightarrow 0 & \quad\mbox{ in } & L^2((0,L_1) \cup (L_2,L_3))\label{eq:ResolventEqA:6}.
	\end{align}
\end{subequations}
Starting from \eqref{eq:ResolventEqA:1} and \eqref{eq:ResolventEqA:2} one can use the assumption $\|U_l\|_{\mathcal{H}} = 1$ in order to deduce
\begin{equation*}
\begin{aligned}
\|u^1_l\|_{L^2(L_1, L_2)} \to 0, \quad  \|v^1_l\|_{L^2((0,L_1) \cup (L_2,L_3))} \to 0, \quad |l| \to +\infty.
\end{aligned} 
\end{equation*}
Indeed, \eqref{eq:ResolventEqA:1} implies $\|u^1_l\|_{L^2(L_1, L_2)} = |l|^{-1}\|f^1_l + u^2_l\|_{L^2(L_1, L_2)} \to 0$ as $|l| \to + \infty$, because by assumption it holds that $\|f^1_l\|_{L^2(L_1, L_2)} \to 0$ as $|l| \to + \infty$ and $\|u^2_l\|_{L^2(L_1, L_2)} \leq \|U_l\|_{\mathcal{H}} = 1$. For $v^1_l$ the argument works analogously.
Next, from \eqref{eq:ResolventEqA:3} and \eqref{eq:ResolventEqA:5}, by multiplying with $H^1_0(L_1,L_2)$ test functions, one has
\begin{equation*}
\begin{aligned}
|l| \|\theta_l\|_{H^{-1}(L_1, L_2)} & \leq && C (\|q_l\|_{L^2(L_1, L_2)} + \|u^2_l\|_{L^2(L_1, L_2)} + \|F_l\|_{\mathcal{H}} ),\\
|l| \|u^2_l\|_{H^{-1}(L_1, L_2)} & \leq && C (\|u^1_{l,\, x}\|_{L^2(L_1, L_2)} + \|\theta_l\|_{L^2(L_1, L_2)} + \|F_l\|_{\mathcal{H}} ),
\end{aligned} 
\end{equation*}
and interpolation, see \cite[Lemma 12.1.]{LionsMagenes1}, combined with \eqref{equation:RessolventDecaySystem} implies
\begin{equation*}
\begin{aligned}
\|\theta_l\|_{L^2(L_1, L_2)}^2 & \leq && \|\theta_l\|_{H^{-1}(L_1, L_2)}\|\theta_l\|_{H^{1}(L_1, L_2)} \\
& \leq && \frac{1}{|l|} C (\|q_l\|_{L^2(L_1, L_2)} + \|u^2_l\|_{L^2(L_1, L_2)} + \|F_l\|_{\mathcal{H}}) \\
& && \times (\|\theta_l\|_{L^2(L_1, L_2)} + \|\theta_{l,\, x} \|_{L^2(L_1, L_2)}) \\
& = && \frac{1}{|l|} C (\|q_l\|_{L^2(L_1, L_2)} + \|u^2_l\|_{L^2(L_1, L_2)} + \|F_l\|_{\mathcal{H}} ) \\
& && \times (\|\theta_l\|_{L^2(L_1, L_2)} + \| k^{-1}(1 + \tau il) q_l - k^{-1}\tau p_l \|_{L^2(L_1, L_2)}),
\end{aligned} 
\end{equation*}
which gives the convergence
\begin{equation}\label{equation:thetabound}
\begin{aligned}
\|\theta_l\|_{L^2(L_1, L_2)}^2 & \leq &&  C (\|q_l\|_{L^2(L_1, L_2)} + \|u^2_l\|_{L^2(L_1, L_2)} + \|F_l\|_{\mathcal{H}} ) \\
& && \times \left(\frac{1}{|l|} \|\theta_l\|_{L^2(L_1, L_2)} + \left(\frac{\tau}{k} + \frac{1}{k|l|} \right)\| q_l \|_{L^2(L_1, L_2)} + \frac{\tau}{k|l|}\|p_l \|_{L^2(L_1, L_2)}\right)\\
& \to && 0, \quad |l| \to +\infty.
\end{aligned} 
\end{equation}
Indeed, $\|U_l\|_{\mathcal{H}} = 1$ implies that $\|u^2_l\|_{L^2(L_1, L_2)}$, $\|\theta_l\|_{L^2(L_1, L_2)}$, $\|q_l\|_{L^2(L_1, L_2)}$ are bounded independently of $l \in \{l_n \, | \, n \in \mathbb{N} \}$,  $\|F_l\|_{\mathcal{H}} \to 0$ as $|l| \to + \infty$ gives $\|p_l\|_{L^2(L_1, L_2)}, \|f_l\|_{L^2(L_1, L_2)} \to 0$ as $|l| \to + \infty$ and from \Cref{remark:q_estimate} it follows that $\|q_l\|_{L^2(L_1, L_2)} \to 0$ as $|l| \to +\infty$, which explains the convergence for all terms above. Similarly, one has
\begin{equation}\label{equation:u2bound}
\begin{aligned}
\|u^2_l\|_{L^2(L_1, L_2)}^2 & \leq && \|u^2_l\|_{H^{-1}(L_1, L_2)}\|u^2_l\|_{H^{1}(L_1, L_2)} \\
& \leq && \frac{C}{|l|} (\|u^1_{l,\, x}\|_{L^2(L_1, L_2)} + \|\theta_l\|_{L^2(L_1, L_2)} + \|F_l\|_{\mathcal{H}}) \|u^2_l\|_{H^1(L_1, L_2)}\\
& = &&  \frac{C}{|l|} (\|u^1_{l,\, x}\|_{L^2(L_1, L_2)} + \|\theta_l\|_{L^2(L_1, L_2)} + \|F_l\|_{\mathcal{H}})  \| il u^1_l - f_l \|_{H^1(L_1, L_2)},
\end{aligned} 
\end{equation}
leading to the following result, for which a proof shall be provided in \Cref{proofs}.
\begin{lemma}\label{lemma:auxilliary0}
	We have $\|u^1_{l,\, x}\|_{L^2(L_1,L_2)} \to 0$ and $\|u^2_l\|_{L^2(L_1,L_2)} \to 0$ as $|l| \to +\infty$.
\end{lemma}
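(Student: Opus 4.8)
The plan is to deduce the two claimed limits in the order $\|u^1_{l,\,x}\|_{L^2(L_1,L_2)}\to 0$ first and $\|u^2_l\|_{L^2(L_1,L_2)}\to 0$ second, because estimate \eqref{equation:u2bound} already reduces the second to the first. Indeed, by \eqref{eq:ResolventEqA:1} one has $\|u^2_l\|_{H^1(L_1,L_2)}=\|il u^1_l-f^1_l\|_{H^1(L_1,L_2)}\le |l|\,\|u^1_l\|_{H^1(L_1,L_2)}+\|f^1_l\|_{H^1(L_1,L_2)}$, and since $\|u^1_l\|_{L^2(L_1,L_2)}\to 0$ while $\|u^1_{l,\,x}\|_{L^2(L_1,L_2)}$ stays bounded, this is of size $|l|\,\|u^1_{l,\,x}\|_{L^2(L_1,L_2)}+o(1)$. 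Inserting it into \eqref{equation:u2bound}, whose prefactor contributes a compensating $|l|^{-1}$, and using $\|\theta_l\|_{L^2(L_1,L_2)}\to 0$ from \eqref{equation:thetabound} together with $\|F_l\|_{\mathcal H}\to 0$, I obtain $\|u^2_l\|_{L^2(L_1,L_2)}^2\le C\,\|u^1_{l,\,x}\|_{L^2(L_1,L_2)}^2+o(1)$, so that $\|u^1_{l,\,x}\|_{L^2(L_1,L_2)}\to 0$ indeed forces $\|u^2_l\|_{L^2(L_1,L_2)}\to 0$.

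To treat $\|u^1_{l,\,x}\|_{L^2(L_1,L_2)}$ I would test \eqref{eq:ResolventEqA:5} against $\overline{u^1_l}$ over $[L_1,L_2]$ and integrate by parts. Replacing $u^2_l=il u^1_l-f^1_l$ turns $il\int u^2_l\overline{u^1_l}$ into $-\|u^2_l\|_{L^2(L_1,L_2)}^2$ up to a term $\int u^2_l\overline{f^1_l}\to 0$, and yields the identity $a\|u^1_{l,\,x}\|_{L^2(L_1,L_2)}^2-\|u^2_l\|_{L^2(L_1,L_2)}^2=a[u^1_{l,\,x}\overline{u^1_l}]_{L_1}^{L_2}-m[\theta_l\overline{u^1_l}]_{L_1}^{L_2}+o(1)$, where the bulk remainder $m\int\theta_l\overline{u^1_{l,\,x}}$ and the pairings with $f^1_l,f^2_l$ vanish as $|l|\to+\infty$ thanks to $\|\theta_l\|_{L^2(L_1,L_2)}\to 0$, $\|q_l\|_{L^2(L_1,L_2)}\to 0$ (the latter by \Cref{remark:q_estimate}) and $\|F_l\|_{\mathcal H}\to 0$. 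The interface contributions are controlled through the conditions built into $D(\mathcal A)$: a trace interpolation $|u^1_l(L_i)|^2\le C\|u^1_l\|_{L^2(L_1,L_2)}\|u^1_l\|_{H^1(L_1,L_2)}$ shows $u^1_l(L_i)=v^1_l(L_i)\to 0$, which together with the analogous trace bound for $\theta_l$ kills $m[\theta_l\overline{u^1_l}]_{L_1}^{L_2}$; the relation $au^1_{l,\,x}(L_i)=bv^1_{l,\,x}(L_i)+m\theta_l(L_i)$ then rewrites $a[u^1_{l,\,x}\overline{u^1_l}]_{L_1}^{L_2}$ in terms of boundary values of $v^1_l$, to be paired with the identity obtained by testing the outer equation \eqref{eq:ResolventEqA:6} against $\overline{v^1_l}$.

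The main obstacle is that this route, by itself, only produces an equipartition relation of the form $a\|u^1_{l,\,x}\|_{L^2(L_1,L_2)}^2+b\|v^1_{l,\,x}\|^2=\|u^2_l\|_{L^2(L_1,L_2)}^2+\|v^2_l\|^2+o(1)$, which does not drive either side to zero. To actually extract decay one must exploit the localized dissipation more sharply than through $\|q_l\|,\|\theta_l\|\to 0$ alone. I would therefore eliminate $u^2_l$ to obtain the reduced equation $a u^1_{l,\,xx}+l^2u^1_l=m\theta_{l,\,x}-f^2_l-il f^1_l$ on $[L_1,L_2]$ and apply a weighted Rellich/Morawetz-type multiplier $\overline{(x-\xi)u^1_{l,\,x}}$; the resulting identity isolates $\|u^1_{l,\,x}\|_{L^2(L_1,L_2)}^2$ against boundary traces and the forcing, and the delicate points are the $|l|$-weights entering $\theta_{l,\,x}$ via \eqref{eq:ResolventEqA:4} and the interface traces, all tamed by the already-established smallness of $\theta_l$ and $q_l$ and by trace interpolation. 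Combining this multiplier identity with the equipartition relation and with \eqref{equation:u2bound} is what I expect to close the estimate and give $\|u^1_{l,\,x}\|_{L^2(L_1,L_2)}\to 0$, whence $\|u^2_l\|_{L^2(L_1,L_2)}\to 0$ by the first step.
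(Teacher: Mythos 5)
Your first step---deducing $\|u^2_l\|_{L^2(L_1,L_2)}\to 0$ from $\|u^1_{l,\,x}\|_{L^2(L_1,L_2)}\to 0$ via \eqref{equation:u2bound}---is correct and is exactly how the paper concludes, and you also correctly diagnose that testing \eqref{eq:ResolventEqA:5} against $\overline{u^1_l}$ only yields an equipartition relation. The gap is in your proposed repair. The Rellich multiplier $\overline{(x-\xi)u^1_{l,\,x}}$ applied to the reduced equation produces an identity of the form
\[
\frac{a}{2}\|u^1_{l,\,x}\|^2_{L^2(L_1,L_2)}+\frac{l^2}{2}\|u^1_l\|^2_{L^2(L_1,L_2)}
= \mbox{(interface traces)} + \mbox{(forcing terms)},
\]
where the interface traces are, up to constants, $|u^1_{l,\,x}(L_i)|^2$ and $l^2|u^1_l(L_i)|^2$, the latter being essentially $|v^2_l(L_i)|^2$ by \eqref{eq:ResolventEqA:1} and the transmission conditions. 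These traces are precisely the quantities entering $B_1$ of \Cref{lemma:auxilliaryA}, and in the paper's architecture they are shown to be small only \emph{after} \Cref{lemma:auxilliary0} is available (interior smallness first, then $B_1=\frac{2}{L_2-L_1}(I_1-R_1)\to 0$, then the elastic part via \Cref{lemma:auxilliaryB}); using them as an input to \Cref{lemma:auxilliary0} is circular. Your claim that they are ``tamed by trace interpolation and the smallness of $\theta_l$ and $q_l$'' fails quantitatively: trace interpolation for $u^1_{l,\,x}$ requires a bound on $\|u^1_{l,\,xx}\|_{L^2(L_1,L_2)}$, which by \eqref{eq:ResolventEqA:5} grows like $|l|$, and traces of $\theta_l$ require a bound on $\|\theta_{l,\,x}\|_{L^2(L_1,L_2)}$, which by \eqref{eq:ResolventEqA:4} is of order $|l|\,\|q_l\|_{L^2(L_1,L_2)}$, while \Cref{remark:q_estimate} gives $\|q_l\|_{L^2(L_1,L_2)}\to 0$ with no rate. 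The same problem affects the forcing terms $m\theta_{l,\,x}$ and $ilf^1_l$ paired against your multiplier. Structurally, every identity in your scheme also holds for the undamped wave system, where uniform resolvent bounds fail; the only dissipative information you feed in is $L^2$-smallness of $\theta_l$ and $q_l$, and combining the equipartition, the Rellich identity, and the elastic-side identities only reproduces relations of the type ``sum of interior norms $=$ sum of interface traces $+$ small'', never a contradiction with $\|U_l\|_{\mathcal H}=1$.

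The missing idea is to extract $\|u^1_{l,\,x}\|_{L^2(L_1,L_2)}$ from the \emph{heat} equation, not the elastic one. The paper multiplies \eqref{eq:ResolventEqA:3} by $\overline{u^1_{l,\,x}}$ and divides by $il$: the coupling term $\frac{m}{il}u^2_{l,\,x}\overline{u^1_{l,\,x}}$ becomes, by \eqref{eq:ResolventEqA:1}, exactly $m|u^1_{l,\,x}|^2$ plus a small error; the term $\int_{L_1}^{L_2}\theta_l\overline{u^1_{l,\,x}}\,dx$ is small by \eqref{equation:thetabound}; and the term $\frac{k}{il}\int_{L_1}^{L_2} q_{l,\,x}\overline{u^1_{l,\,x}}\,dx$ is integrated by parts---crucially, the boundary contributions vanish exactly because $q_l(L_i)=0$---after which $\overline{u^1_{l,\,xx}}$ is replaced via \eqref{eq:ResolventEqA:5} and $\overline{\theta_{l,\,x}}$ via \eqref{eq:ResolventEqA:4}, so that every dangerous factor of $l$ is either cancelled by the prefactor $\frac{1}{il}$ or appears quadratically in $q_l$, and $\|q_l\|^2_{L^2(L_1,L_2)}\to 0$ suffices. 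This is the mechanism by which the dissipation, acting only through $q$, reaches the elastic strain: the coupling $mu_{xt}$ in the heat equation. Without a step of this kind your scheme cannot close.
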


Therefore it is left to control the undamped, purely elastic, parts of the bar by means of the previous estimates with the help of the transmission conditions at the interface points. 
Hereto we multiply \eqref{eq:ResolventEqA:5} by
\[
w_1(x) := (L_1 + L_2 - 2x)\overline{(au^1_{l,\, x}(x)-m\theta_l(x))},
\]
integrate over $[L_1, L_2]$ and get
\begin{equation}\label{equation:Integral1}
\begin{aligned}
\int_{L_1}^{L_2} \left[(il u^2_l - au^1_{l,\, xx} + m \theta_{l,\, x})w_1 - f^2_l w_1 \right] \, dx = 0.
\end{aligned} 
\end{equation}
By inserting \eqref{equation:RessolventDecaySystem} and performing multiple integrations by part, the following representation for \eqref{equation:Integral1} shall be proved in \Cref{proofs}.

\begin{lemma}\label{lemma:auxilliaryA}
	The equality \eqref{equation:Integral1} is equivalent to
	\begin{equation*}
	\begin{aligned}
	I_1 & = && \frac{L_2-L_1}{2}B_1 + R_1,
	\end{aligned} 
	\end{equation*}
	where
	\begin{equation*}
	\begin{aligned}
	I_1 & := && \int_{L_1}^{L_2} \left[ (a+m^2)|u^2_l|^2 + |a u^1_{l,\, x} - m\theta_l|^2 + \frac{k^2}{\tau} |\theta_l|^2 \right] \, dx, \\
	B_1 & := && \sum\limits_{i=1,2} \left[ (a+m^2)|v^2_l(L_i)|^2 + |bv^1_{l,\, x}(L_i)|^2 + \frac{k^2}{\tau}|\theta_l(L_i)|^2\right],
	\end{aligned} 
	\end{equation*}
	while $R_1$ contains all residual terms and satisfies $R_1 \to 0$, as $|l| \to +\infty$.
\end{lemma}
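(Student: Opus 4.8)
The plan is to insert the resolvent identities \eqref{equation:RessolventDecaySystem} into \eqref{equation:Integral1} and integrate by parts repeatedly, exploiting that the multiplier is built from the Rellich-type weight $g(x) := L_1 + L_2 - 2x$, which satisfies $g' \equiv -2$ and $g(L_1) = L_2 - L_1 = -g(L_2)$. Writing $\Phi_l := a u^1_{l,\,x} - m\theta_l$ for the elastic stress, equation \eqref{eq:ResolventEqA:5} reads $il u^2_l - \Phi_{l,\,x} = f^2_l$ and the multiplier becomes $w_1 = g\,\overline{\Phi_l}$. Three pieces of boundary data are used throughout: the force-matching transmission condition $\Phi_l(L_i) = b v^1_{l,\,x}(L_i)$, the velocity-matching condition $u^2_l(L_i) = v^2_l(L_i)$, and the insulation condition $q_l(L_i) = 0$. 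Taking real parts of \eqref{equation:Integral1} and discarding $\Re\int f^2_l w_1$, which is residual since $\|f^2_l\|\to 0$ while $\|\Phi_l\|$ stays bounded, reduces the claim to comparing $\Re\int il u^2_l\,w_1$ with $\Re\int \Phi_{l,\,x}\,w_1$. Throughout, $R_1$ collects every term that is a product of a bounded factor with one of $\|\theta_l\|, \|q_l\|, \|f^i_l\|, \|h_l\|, \|p_l\| \to 0$ (using \Cref{remark:q_estimate} and \eqref{equation:thetabound}).

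The conservative term is immediate: the elementary identity $2\Re\int g\,\Phi_{l,\,x}\overline{\Phi_l} = [g|\Phi_l|^2]_{L_1}^{L_2} - \int g'|\Phi_l|^2$ together with $g' = -2$ and force matching produces exactly $\int|\Phi_l|^2$ and the boundary contribution $\tfrac{L_2-L_1}{2}\sum_i |b v^1_{l,\,x}(L_i)|^2$ of $B_1$. For $\Re\int il u^2_l\,w_1$ I would split $\overline{\Phi_l} = a\overline{u^1_{l,\,x}} - m\overline{\theta_l}$. In the first half I substitute $il\,\overline{u^1_{l,\,x}} = \overline{u^2_{l,\,x}} + \overline{f^1_{l,\,x}}$ from the differentiated identity \eqref{eq:ResolventEqA:1}; since $il\cdot(i/l) = -1$ the factor $il$ is absorbed and one is left with $-a\Re\int g\,u^2_l\overline{u^2_{l,\,x}}$ modulo an $f^1_{l,\,x}$-residual. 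One more integration by parts, using $u^2_l(L_i) = v^2_l(L_i)$, yields $a\int|u^2_l|^2$ and the boundary term $\tfrac{a(L_2-L_1)}{2}\sum_i |v^2_l(L_i)|^2$.

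The delicate half is $-m\Re\int il u^2_l\,g\,\overline{\theta_l}$. Here I substitute $\overline{\theta_l}$ through \eqref{eq:ResolventEqA:3}, again using the cancellation $il\cdot(i/l) = -1$, which turns this into a term $-m^2\Re\int g\,u^2_l\overline{u^2_{l,\,x}}$ (producing, after one integration by parts, the remaining $m^2$-weighted bulk term $m^2\int|u^2_l|^2$ and boundary term $\tfrac{m^2(L_2-L_1)}{2}\sum_i|v^2_l(L_i)|^2$, thus completing the coefficient $a+m^2$), an $h_l$-residual, and a genuinely leftover term proportional to $\int g\,u^2_l\overline{q_{l,\,x}}$. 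This last term is where the $\tfrac{k^2}{\tau}|\theta_l|^2$ data must be generated and is the crux of the argument.

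The main obstacle is that neither $\|u^2_{l,\,x}\|$ nor $\|q_{l,\,x}\|$ is known to be bounded, so the leftover cannot be estimated directly and must be processed purely algebraically. The plan is: integrate $\int g\,u^2_l\overline{q_{l,\,x}}$ by parts, the boundary term vanishing by $q_l(L_i) = 0$; replace $m\,u^2_{l,\,x}$ using \eqref{eq:ResolventEqA:3}; dispose of the emerging $\int g\,q_{l,\,x}\overline{q_l}$ by one further integration by parts (again killing the boundary via $q_l(L_i)=0$), so that it reduces to a multiple of $\int|q_l|^2$, hence residual. This leaves a single dangerous term carrying an explicit factor $l$, namely a multiple of $l\int g\,\theta_l\overline{q_l}$, which is \emph{not} residual merely from $\|\theta_l\|,\|q_l\|\to 0$. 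The resolution is Cattaneo's law \eqref{eq:ResolventEqA:4}: solving it for $il\,q_l$ trades the factor $l$ for $\theta_{l,\,x}$, and a final integration by parts of the resulting $\tfrac{k^2}{\tau}\int g\,\theta_l\overline{\theta_{l,\,x}}$ produces precisely $\tfrac{k^2}{\tau}\int|\theta_l|^2$ together with the boundary term $\tfrac{k^2(L_2-L_1)}{2\tau}\sum_i|\theta_l(L_i)|^2$, every remaining contribution being residual. Collecting the bulk terms on the left assembles $I_1$, the three boundary sums combine into $\tfrac{L_2-L_1}{2}B_1$, and the rest is gathered into $R_1 \to 0$; the only real source of error is careful sign bookkeeping, in particular the signs stemming from $g(L_1) = -g(L_2) > 0$.
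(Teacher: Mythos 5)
Your proposal is correct and follows essentially the same route as the paper: the same multiplier $w_1 = (L_1+L_2-2x)\overline{(au^1_{l,x}-m\theta_l)}$, the same substitutions via \eqref{eq:ResolventEqA:1}, \eqref{eq:ResolventEqA:3} and then Cattaneo's law \eqref{eq:ResolventEqA:4} to trade the dangerous factor $l$ for $\theta_{l,\,x}$, the same integrations by parts using $q_l(L_i)=0$ and the transmission conditions, and the same classification of residual terms. The only blemish is the displayed identity $il\,\overline{u^1_{l,\,x}} = \overline{u^2_{l,\,x}} + \overline{f^1_{l,\,x}}$, which as written is off by a sign (conjugation of \eqref{eq:ResolventEqA:1} gives $il\,\overline{u^1_{l,\,x}} = -\overline{u^2_{l,\,x}} - \overline{f^1_{l,\,x}}$), but the term you actually derive from it, $-a\Re\int g\, u^2_l\,\overline{u^2_{l,\,x}}$, carries the correct sign, so the argument is unaffected.
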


Regarding the purely elastic parts of the bar we define
\begin{equation*}
\begin{aligned}
w_2(x) := x \overline{v^1_{l,\, x}(x)}, \quad w_3(x) := (x - L_3) \overline{v^1_{l,\, x}(x)},
\end{aligned} 
\end{equation*}
and multiply \eqref{eq:ResolventEqA:6} on $[0, L_1]$ by $w_2$ and on $[L_2, L_3]$ by $w_3$. Straight forward calculations, provided in \Cref{proofs}, lead to the following lemma.
\begin{lemma}\label{lemma:auxilliaryB}
	We have
	\begin{equation*}
	\begin{aligned}
	\frac{1}{2} I_2 & = && \frac{1}{2}B_2 + R_2,
	\end{aligned} 
	\end{equation*}
	where
	\begin{equation*}
	\begin{aligned}
	I_2 & := && \left[ \int_{0}^{L_1} + \int_{L_2}^{L_3} \right] \left( |v^2_l|^2 + b |v^1_{l,\, x}|^2 \right) \, dx, \\
	B_2 & := && L_1 |v^2_l(L_1)|^2 +  b L_1 |v^1_{l,\, x}(L_1)|^2 + (L_3-L_2) |v^2_l(L_2)|^2 +  b (L_3-L_2)|v^1_{l,\, x}(L_2)|^2,
	\end{aligned} 
	\end{equation*}
	while $R_2$ contains again all residual terms and satisfies $R_2 \to 0$, as $|l| \to +\infty$.
\end{lemma}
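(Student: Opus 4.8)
The plan is to run the standard multiplier computation separately on the two purely elastic intervals $[0,L_1]$ and $[L_2,L_3]$, exploiting that the weights $w_2,w_3$ have been chosen precisely so that the boundary contributions at the \emph{outer} endpoints $x=0$ and $x=L_3$ are killed, while the \emph{interface} contributions at $x=L_1$ and $x=L_2$ survive (those are the quantities that will later be matched across the transmission conditions together with \Cref{lemma:auxilliaryA}). On these intervals the only relevant relations are the wave equation \eqref{eq:ResolventEqA:6} coupled with \eqref{eq:ResolventEqA:2}, so no thermoelastic quantities enter and the two estimates are independent.

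First I would multiply \eqref{eq:ResolventEqA:6} on $[0,L_1]$ by $w_2 = x\,\overline{v^1_{l,\,x}}$, integrate and take real parts. The decisive manipulation is to rewrite the inertial term $il\,v^2_l$ so that the large factor $l$ disappears: differentiating \eqref{eq:ResolventEqA:2} gives $il\,v^1_{l,\,x} = v^2_{l,\,x} + g^1_{l,\,x}$, whence $\overline{v^1_{l,\,x}} = -(il)^{-1}(\overline{v^2_{l,\,x}} + \overline{g^1_{l,\,x}})$ and therefore
\begin{equation*}
\int_0^{L_1} il\,v^2_l\,x\,\overline{v^1_{l,\,x}}\,dx = -\int_0^{L_1} v^2_l\,x\,\overline{v^2_{l,\,x}}\,dx - \int_0^{L_1} v^2_l\,x\,\overline{g^1_{l,\,x}}\,dx.
\end{equation*}
Taking real parts and integrating by parts one finds $\Re\int_0^{L_1} v^2_l\,x\,\overline{v^2_{l,\,x}}\,dx = \tfrac12 L_1|v^2_l(L_1)|^2 - \tfrac12\int_0^{L_1}|v^2_l|^2\,dx$, while $-b\,\Re\int_0^{L_1} v^1_{l,\,xx}\,x\,\overline{v^1_{l,\,x}}\,dx = -\tfrac{b}{2}L_1|v^1_{l,\,x}(L_1)|^2 + \tfrac{b}{2}\int_0^{L_1}|v^1_{l,\,x}|^2\,dx$, the endpoint $x=0$ giving nothing because the weight vanishes there. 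Rearranging produces exactly the $[0,L_1]$-portion of the claimed identity. The interval $[L_2,L_3]$ is treated identically with $w_3 = (x-L_3)\overline{v^1_{l,\,x}}$; there the weight vanishes at $x=L_3$, the surviving boundary term sits at $x=L_2$, and the factor $L_3-L_2$ appears. Summing the two identities gives $\tfrac12 I_2 = \tfrac12 B_2 + R_2$.

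It then remains to collect into $R_2$ the residual terms and to verify $R_2\to0$ as $|l|\to+\infty$. These residuals are the right hand side pieces $\Re\int g^2_l\,w_j\,dx$ and the correction $-\Re\int v^2_l\,x\,\overline{g^1_{l,\,x}}\,dx$ (and its $[L_2,L_3]$ analogue) arising from the substitution above. Their smallness follows from $\|U_l\|_{\mathcal H}=1$, which bounds $\|v^2_l\|_{L^2}$ and $\|v^1_{l,\,x}\|_{L^2}$ uniformly in $l$, together with $\|F_l\|_{\mathcal H}\to0$: the latter forces the sixth component $g^2_l\to0$ in $L^2$ and the second component $g^1_l\to0$ in $H^1$, in particular $g^1_{l,\,x}\to0$ in $L^2$. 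Since the weights $x$ and $x-L_3$ are bounded on their respective intervals, Cauchy--Schwarz closes each of these estimates.

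I expect the only genuinely delicate point to be the justification of the integrations by parts and of differentiating \eqref{eq:ResolventEqA:2}, which requires $v^2_l\in H^1$ on the elastic parts and $g^1_l\in H^1$; both hold, the former because $U_l\in D(\mathcal A_0)\subseteq D(\mathcal A)$ and the latter because $F_l\in\mathcal H$. Everything beyond that is routine bookkeeping of residual terms entirely analogous to (but simpler than) the treatment of the thermoelastic core in \Cref{lemma:auxilliaryA}.
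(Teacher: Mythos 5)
Your proposal is correct and follows essentially the same route as the paper's proof: the same multipliers $w_2 = x\,\overline{v^1_{l,\,x}}$ and $w_3 = (x-L_3)\,\overline{v^1_{l,\,x}}$, the same substitution of the differentiated equation \eqref{eq:ResolventEqA:2} to eliminate the factor $il$, the same integrations by parts producing interface terms at $L_1$ and $L_2$ (with the outer endpoints killed by the weights), and the same residuals $\Re\int x\,(v^2_l\,\overline{g^1_{l,\,x}} + g^2_l\,\overline{v^1_{l,\,x}})\,dx$ handled via $\|U_l\|_{\mathcal{H}}=1$, $\|F_l\|_{\mathcal{H}}\to 0$ and Cauchy--Schwarz. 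Your added remark on the regularity needed to justify differentiating \eqref{eq:ResolventEqA:2} and the integrations by parts is a detail the paper leaves implicit, but it does not change the argument.
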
	

Now, one can choose a constant $\tilde{C} >  0$, which is independent of $l \in \{l_n \, | \, n \in \mathbb{N} \}$, such that $B_2 < \tilde{C} B_1$ and combine \Cref{lemma:auxilliaryA} and \Cref{lemma:auxilliaryB} in order to find a $l$-independent constant $C > 0$ such that
\begin{equation*}
\begin{aligned}
I_2 \leq C I_1 + R_3,
\end{aligned} 
\end{equation*}
where $R_3$ combines $R_1$ and $R_2$ and satisfies $R_3 \to 0$, as $|l| \to +\infty$. Moreover, $I_1 \to 0$, as $|l| \to +\infty$ by \eqref{equation:thetabound} and \Cref{lemma:auxilliary0}. All terms which are contained in $\|U_l\|_{\mathcal{H}}$ appear in $I_1$, or $I_2$, or are already seen to converge to zero as $|l| \to + \infty$ in \Cref{lemma:auxilliary0}, by \Cref{remark:q_estimate} or in the discussion after \eqref{equation:RessolventDecaySystem}. Therefore, we arrive at
\[
\|U_l\|_{\mathcal{H}} \to 0, \quad |l| \to +\infty,
\]
which is in contradiction with $\|U_l\|_{\mathcal{H}} = 1$ for all $l \in \{l_n \, | \, n \in \mathbb{N} \}$. We are now in the position to apply \Cref{theorem:quotedCharacterisationExpStab} in order to proof the main result.

\begin{theorem}\label{theorem:MainResult}
	Let $U_0 \in \mathcal{H}$, according to \Cref{lemma:directsum} being uniquely decomposed as $U_0 = W_0 + V_0$, with $W_0 \in \operatorname{ker}(\mathcal{A})$ and $V_0 \in \operatorname{range}(\mathcal{A})$. There exist constants $M > 0$ and $\omega > 0$, which are independent of $U_0$, such that
	\[
	\| T(t) U_0 - W_0\|_{\mathcal{H}} \leq M \operatorname{e}^{-\omega t} \| U_0 \|_{\mathcal{H}} \quad (t \geq 0).
	\]
\end{theorem}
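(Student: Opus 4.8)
The plan is to reduce the claim to the exponential stability of the semigroup generated by $\mathcal{A}_0$ on $\operatorname{range}(\mathcal{A})$, which the discussion preceding the theorem has essentially already established. First I would record that every element of $\ker(\mathcal{A})$ is a fixed point of the semigroup: if $\mathcal{A} W_0 = 0$, then the constant function $t \mapsto W_0$ is the classical solution of \eqref{equation:Cauchyproblem} with datum $W_0$, so by uniqueness $T(t) W_0 = W_0$ for all $t \geq 0$. Consequently, writing $U_0 = W_0 + V_0$ as in \Cref{lemma:directsum}, one has
\[
T(t) U_0 - W_0 = T(t) W_0 + T(t) V_0 - W_0 = T(t) V_0,
\]
so it suffices to bound $\|T(t) V_0\|_{\mathcal{H}}$ for $V_0 \in \operatorname{range}(\mathcal{A})$.

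Next I would verify that $\operatorname{range}(\mathcal{A})$ is a closed, $T(t)$-invariant subspace on which the restricted semigroup is generated by $\mathcal{A}_0$. Closedness and orthogonality of the decomposition follow from the proof of \Cref{lemma:directsum}, where $\ker(\mathcal{A}) = \ker(\mathcal{A}^*) = \operatorname{range}(\mathcal{A})^{\perp}$; in particular $\operatorname{range}(\mathcal{A}) = \ker(\mathcal{A})^{\perp}$ is closed and $\|V_0\|_{\mathcal{H}} \leq \|U_0\|_{\mathcal{H}}$. Invariance follows from the commutation $T(t)\mathcal{A}x = \mathcal{A}T(t)x$ valid for $x \in D(\mathcal{A})$: any $y = \mathcal{A}x \in \operatorname{range}(\mathcal{A})$ satisfies $T(t)y = \mathcal{A}T(t)x \in \operatorname{range}(\mathcal{A})$, and boundedness of $T(t)$ together with closedness extends this to all of $\operatorname{range}(\mathcal{A})$. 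Thus $(T_0(t))_{t \geq 0} := (T(t)|_{\operatorname{range}(\mathcal{A})})_{t \geq 0}$ is a strongly continuous contraction semigroup on the Hilbert space $\operatorname{range}(\mathcal{A})$ whose generator is precisely the part of $\mathcal{A}$ in $\operatorname{range}(\mathcal{A})$, namely $\mathcal{A}_0$, by the standard theory of restricted semigroups, cf. \cite{EngelNagel}.

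Finally, I would invoke \Cref{theorem:quotedCharacterisationExpStab} for $\mathcal{A}_0$ on $\operatorname{range}(\mathcal{A})$. The contradiction argument carried out above shows that $\mathcal{A}_0$ meets both hypotheses: the inclusion $i\mathbb{R} \subseteq \rho(\mathcal{A}_0)$ was obtained from $\sigma(\mathcal{A}_0) = \sigma(\mathcal{A}) \setminus \{0\}$ together with \Cref{proposition:StationarySolutions}, and the uniform resolvent bound \eqref{equation:desiredResBound} is exactly the statement that the assumption $\|U_l\|_{\mathcal{H}} = 1$ with $\|U_l\|_{\mathcal{H}} \to 0$ was shown to contradict. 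Hence there exist constants $M, \omega > 0$, depending only on the semigroup $T_0$ and not on the datum, with $\|T_0(t)\|_{L(\operatorname{range}(\mathcal{A}))} \leq M \operatorname{e}^{-\omega t}$, and combining the steps yields
\[
\|T(t) U_0 - W_0\|_{\mathcal{H}} = \|T_0(t) V_0\|_{\mathcal{H}} \leq M \operatorname{e}^{-\omega t} \|V_0\|_{\mathcal{H}} \leq M \operatorname{e}^{-\omega t} \|U_0\|_{\mathcal{H}},
\]
which is the assertion. I expect the only genuinely delicate point to be the structural bookkeeping of the second paragraph—confirming that $\operatorname{range}(\mathcal{A})$ is closed and invariant and that its restricted generator is exactly $\mathcal{A}_0$—since once these facts are secured, the exponential estimate is delivered directly by the Huang--Prüss characterization and the resolvent estimate already completed, with the crucial observation that the constants $M$ and $\omega$ are intrinsic to $T_0$ and therefore independent of $U_0$.
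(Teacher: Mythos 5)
Your proposal is correct and follows essentially the same route as the paper: both reduce the claim to the exponential stability of the restricted semigroup $(T_0(t))_{t\geq 0}$ generated by $\mathcal{A}_0$ on $\operatorname{range}(\mathcal{A})$ via the Huang--Pr\"uss criterion of \Cref{theorem:quotedCharacterisationExpStab} and the already-established resolvent bounds, then combine $T(t)W_0 = W_0$ with $\|V_0\|_{\mathcal{H}} \leq \|U_0\|_{\mathcal{H}}$ from the orthogonal decomposition of \Cref{lemma:directsum}. The only cosmetic difference is that you obtain $T(t)W_0 = W_0$ from uniqueness of the classical solution with constant datum, whereas the paper derives it from the commutation $\mathcal{A}T(t)W_0 = T(t)\mathcal{A}W_0 = 0$; the two arguments are interchangeable.
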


\begin{proof}
	Due to the previous elaborations, \Cref{theorem:quotedCharacterisationExpStab} can be applied and implies uniform exponential stability for the semigroup $(T_0(t))_{t \geq 0}$ generated by $\mathcal{A}_0$. We note that $(T_0(t))_{t \geq 0}$ is the restriction of $(T(t))_{t \geq 0}$ to $\operatornamewithlimits{range}(\mathcal{A})$. Indeed, $\operatornamewithlimits{range}(\mathcal{A})$ is an invariant subspace of $(T(t))_{t \geq 0}$ and one can see that $\mathcal{A}_0$, as defined in the beginning of this section, is the part of $\mathcal{A}$ in $\operatornamewithlimits{range}(\mathcal{A})$ according to the unnumbered Definition from \cite[Page 60]{EngelNagel}. The unnumbered proposition from \cite[Page 60]{EngelNagel}, noting also \cite[Page 43, 5.12]{EngelNagel}, can be applied in order to see that $\mathcal{A}_0$ is the generator of the restriction of $(T(t))_{t \geq 0}$ to $\operatornamewithlimits{range}(\mathcal{A})$. The assertion of \Cref{theorem:MainResult} follows now by noticing that
	\begin{equation*}
	\begin{aligned}
		\| T(t) U_0 - W_0 \|_{\mathcal{H}} & \leq && \| T(t) W_0 - W_0 \|_{\mathcal{H}} + \| T(t) V_0 \|_{\mathcal{H}} \\
		& = && \| T_0(t) V_0 \|_{\mathcal{H}} \\
		& \leq && M \operatorname{e}^{-\omega t} \|V_0\|_{\mathcal{H}} \\
		& \leq &&  M \operatorname{e}^{-\omega t} \|U_0\|_{\mathcal{H}},
	\end{aligned} 
	\end{equation*}
	for all $t \geq 0$. We used above that $W_0 \in \operatorname{ker}(\mathcal{A})$ and the property of a strongly continuous semigroup and its generator to commute, hence
	\[
	\left\|\frac{d}{dt} T(t) W_0 \right\|_{\mathcal{H}} = \|\mathcal{A} T(t) W_0 \|_{\mathcal{H}} = \| T(t) \mathcal{A} W_0 \|_{\mathcal{H}} = 0 \quad (t \geq 0),
	\]
	which implies that $T(t)W_0 = W_0$ for all $t \geq 0$. Furthermore, \Cref{lemma:directsum} and the Pythagorean identity have been employed in order to see that
	\[
		\|U_0\|_{\mathcal{H}}^2 \geq \|U_0\|_{\mathcal{H}}^2 - \|W_0\|_{\mathcal{H}}^2 = \|V_0\|_{\mathcal{H}}^2.
	\]\qed
\end{proof}

\begin{remark}\label{remark:Energydecay}
	The stability of equilibria in \Cref{theorem:MainResult} can also be reformulated in terms of the energy $E_U(t)$ of every mild or classical solution to \eqref{equation:TimeDomainSystem:eq1} - \eqref{equation:TimeDomainSystem:eq4}:	Let $U = U(t,x) = [u(t,x), v(t,x), q(t,x), \theta(t,x)]'$ be a mild or classical solution to \eqref{equation:TimeDomainSystem:eq1} - \eqref{equation:TimeDomainSystem:eq4} with initial data $U_0 = [u_0,v_0, q_0, \theta_0, u_1, v_1]' \in \mathcal{H}$ of the form $U_0 = W_0 + V_0$, where $W_0 \in \operatorname{ker}(\mathcal{A})$ and $V_0 \in \operatorname{range}(\mathcal{A})$. Then there exist constants $M > 0$ and $\omega > 0$, independent of the initial data, such that $E_{U - W_0}(t) \leq  M \operatorname{e}^{-\omega t}$ for $t \geq 0$.
\end{remark}

In \cite[Corollary 3.9]{WangMeng} the same model has been studied in a semilinear context and, without further explanations, the assumption
\begin{equation}\label{equation:AssumptionWellpreparedData}
\begin{aligned}
\int_{L_1}^{L_2}  \theta_0 \, dx  + m (u^1_0(L_2) - u^1_0(L_1)) = 0,
\end{aligned} 
\end{equation}
has been imposed on the initial data. We would like prove that \eqref{equation:AssumptionWellpreparedData} is indeed necessary for exponential decay to zero and in fact, that imposing \eqref{equation:AssumptionWellpreparedData} precisely selects those initial states from which solutions to \eqref{equation:TimeDomainSystem:eq1} - \eqref{equation:TimeDomainSystem:eq4} decay to zero (the trivial steady state). This follows from \Cref{corollary:CharacterisationStationary} below by noting that stationary solutions correspond to $\operatorname{ker}(\mathcal{A})$ and that $\operatorname{range}(\mathcal{A})$, as seen above, characterizes the trajectories which decay to zero.

\begin{proposition}\label{corollary:CharacterisationStationary} Let $U_0 = [u^1_0, v^1_0, \theta_0, q_0, u^2_0, v^2_0]' \in \mathcal{H}$ be arbitrary. Then
	\begin{equation*}
	\begin{aligned}
	\int_{L_1}^{L_2}  \theta_0 \, dx  + m (u^1_0(L_2) - u^1_0(L_1)) = 0 \iff U_0 \in \operatorname{range}(\mathcal{A}).
	\end{aligned} 
	\end{equation*}
\end{proposition}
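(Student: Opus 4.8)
The plan is to translate membership in $\operatorname{range}(\mathcal{A})$ into an orthogonality condition against the one-dimensional kernel computed in \Cref{proposition:StationarySolutions}, and then to show that this orthogonality condition is, up to a nonzero multiplicative constant, exactly the stated integral identity.

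First I would invoke \Cref{lemma:directsum} together with its proof: since the decomposition $\mathcal{H} = \operatorname{ker}(\mathcal{A}) \oplus \operatorname{range}(\mathcal{A})$ is orthogonal, with $\operatorname{ker}(\mathcal{A}) = \operatorname{ker}(\mathcal{A}^*) = \operatorname{range}(\mathcal{A})^{\perp}$ and $\operatorname{range}(\mathcal{A})$ closed, one has $\operatorname{range}(\mathcal{A}) = \operatorname{ker}(\mathcal{A})^{\perp}$. Hence $U_0 \in \operatorname{range}(\mathcal{A})$ if and only if $U_0$ is orthogonal to $\operatorname{ker}(\mathcal{A})$. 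By \Cref{proposition:StationarySolutions}, the kernel is spanned by the single vector $\Phi := [\zeta^1, \zeta^2, \zeta^3, 0, 0, 0]'$, so the membership reduces to the scalar condition $\langle U_0, \Phi \rangle_{\mathcal{H}} = 0$.

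Next I would compute this inner product explicitly. Because $\Phi$ has vanishing last three components and $\zeta^1, \zeta^2, \zeta^3$ are real, only three terms survive, and since $\zeta^1_x$, $\zeta^2_x$, $\zeta^3$ are constants,
\[
\langle U_0, \Phi \rangle_{\mathcal{H}} = a \zeta^1_x \int_{L_1}^{L_2} u^1_{0,x} \, dx + \zeta^3 \int_{L_1}^{L_2} \theta_0 \, dx + b \zeta^2_x \left[\int_0^{L_1} + \int_{L_2}^{L_3}\right] v^1_{0,x} \, dx.
\]
The fundamental theorem of calculus gives $\int_{L_1}^{L_2} u^1_{0,x}\,dx = u^1_0(L_2)-u^1_0(L_1)$, while the endpoint conditions $v^1_0(0)=v^1_0(L_3)=0$ and the transmission conditions $u^1_0(L_i)=v^1_0(L_i)$ built into $\mathcal{H}$ yield $\left[\int_0^{L_1}+\int_{L_2}^{L_3}\right]v^1_{0,x}\,dx = v^1_0(L_1)-v^1_0(L_2) = -(u^1_0(L_2)-u^1_0(L_1))$. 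The decisive algebraic observation, which I would verify directly from the closed forms of $\zeta^1,\zeta^2,\zeta^3$, is the identity $a\zeta^1_x - b\zeta^2_x = m\zeta^3$. Substituting the two boundary evaluations and this identity collapses the inner product to
\[
\langle U_0, \Phi \rangle_{\mathcal{H}} = \zeta^3 \left(\int_{L_1}^{L_2} \theta_0 \, dx + m\bigl(u^1_0(L_2)-u^1_0(L_1)\bigr)\right).
\]

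Finally I would note that $\zeta^3 \neq 0$: since $0 < L_1 < L_2 < L_3$ forces $L_2-L_1-L_3 < 0$, the numerator $a(L_2-L_1-L_3)-b(L_2-L_1)$ is strictly negative, so $\zeta^3 < 0$. Dividing out the nonzero factor $\zeta^3$ shows that $\langle U_0, \Phi\rangle_{\mathcal{H}} = 0$ if and only if the bracketed integral expression vanishes, which combined with the first step is precisely the claimed equivalence. The main obstacle is the bookkeeping in the third step — correctly reducing the two one-sided integrals of $v^1_{0,x}$ through the endpoint and transmission conditions, and spotting the cancellation $a\zeta^1_x - b\zeta^2_x = m\zeta^3$ that fuses the elastic and interface contributions with the thermal one; everything else follows from the orthogonality established in \Cref{lemma:directsum} and the explicit kernel of \Cref{proposition:StationarySolutions}.
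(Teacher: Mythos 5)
Your proof is correct, and it takes a genuinely different route from the paper's. The paper proves the two implications separately: for $U_0 = \mathcal{A}\tilde{U} \in \operatorname{range}(\mathcal{A})$ it computes the functional $\ell(U_0) := \int_{L_1}^{L_2}\theta_0\,dx + m(u^1_0(L_2)-u^1_0(L_1))$ directly, exploiting that the third component of $\mathcal{A}\tilde{U}$ is the exact derivative $-(k\tilde{q}+m\tilde{u}^2)_x$ and that $\tilde{q}(L_i)=0$, so the integral telescopes to zero; for the converse it argues by contradiction, writing $U_0 = W_0 + V_0$ with $0 \neq W_0 \in \operatorname{ker}(\mathcal{A})$, substituting the explicit kernel formula into $\ell$, and deriving the impossible inequality $0 < b(L_2-L_1) = (a+m^2)(L_2-L_1-L_3) < 0$. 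You instead fold both directions into one computation: using the orthogonality $\operatorname{ker}(\mathcal{A}) = \operatorname{ker}(\mathcal{A}^*) = \operatorname{range}(\mathcal{A})^{\perp}$ from the proof of \Cref{lemma:directsum}, membership in $\operatorname{range}(\mathcal{A})$ becomes the single scalar condition $\langle U_0, \Phi\rangle_{\mathcal{H}} = 0$ with $\Phi = [\zeta^1,\zeta^2,\zeta^3,0,0,0]'$, and you then verify $\langle U_0,\Phi\rangle_{\mathcal{H}} = \zeta^3\,\ell(U_0)$ with $\zeta^3 \neq 0$. All your ingredients check out: the reduction of the outer integrals via $v^1_0(0)=v^1_0(L_3)=0$ and $u^1_0(L_i)=v^1_0(L_i)$ is valid for arbitrary elements of $\mathcal{H}$ (these conditions are built into the state space, not only into $D(\mathcal{A})$), and the cancellation $a\zeta^1_x - b\zeta^2_x = m\zeta^3$ is precisely the transmission condition verified for $\Phi$ in \Cref{proposition:StationarySolutions}. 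What your approach buys is conceptual clarity — the constraint \eqref{equation:AssumptionWellpreparedData} is exposed as nothing but orthogonality to the one-dimensional space of equilibria, with the nondegeneracy $\zeta^3 < 0$ made explicit — at the price of leaning on the orthogonality of the decomposition (which the paper itself also uses, via the Pythagorean identity, in the proof of \Cref{theorem:MainResult}); the paper's version needs only the algebraic direct sum, and its forward implication additionally displays the conservation-law structure responsible for the identity.
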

\begin{proof}
	The assertion follows from \Cref{proposition:StationarySolutions} and \Cref{lemma:directsum} by straight forward calculations of which we shall indicate the main steps. First let
	\[
	U_0 := \mathcal{A} \tilde{U} = [\tilde{u}^2, \tilde{v}^2, -(k\tilde{q} + m \tilde{u}^2_0)_x, -k\tau^{-1}\tilde{\theta}_x - \tau^{-1}\tilde{q}, (a\tilde{u}^1_x - m \tilde{\theta})_x, b\tilde{v}^1_{xx}]',
	\]
	where $\tilde{U} \in D(\mathcal{A})$, i.e. $U_0 \in \operatorname{range}(\mathcal{A})$. Then, using the boundary condition for $\tilde{q}$, one gets
	\begin{equation*}
	\begin{aligned}
	\int_{L_1}^{L_2}  \theta_0 \, dx  + m (u^1_0(L_2) - u^1_0(L_1)) & = && - \int_{L_1}^{L_2} \left( k \tilde{q}_x + m\tilde{u}^2_x \right) \, dx  + m (\tilde{u}^2_0(L_2) - \tilde{u}^2_0(L_1))\\
	& = && - m (\tilde{u}^2_0(L_2) - \tilde{u}^2_0(L_1)) +  (\tilde{u}^2_0(L_2) - \tilde{u}^2_0(L_1))\\
	& = && 0.
	\end{aligned} 
	\end{equation*}
	On the contrary, if \eqref{equation:AssumptionWellpreparedData} is satisfied we argue by contradiction. Thereto, assume that $U_0 = W_0 + V_0$ with $0 \neq W_0 \in \operatorname{ker}(\mathcal{A})$ and $V_0 \in \operatorname{range}(\mathcal{A})$. By plugging the concrete formula for $W_0$ from \Cref{proposition:StationarySolutions} into \eqref{equation:AssumptionWellpreparedData} one obtains after some computations
	\begin{equation*}
	\begin{aligned}
	0 & = && \int_{L_1}^{L_2} \frac{a(L_2-L_1-L_3) - b(L_2 - L_1)}{m L_1 L_3} \, dx + m \left( \frac{L_2-L_1-L_3}{L_1L_3}L_2 - \frac{L_2 - L_1 - L_3}{L_3} \right)\\
	& = && (a + m^2) (L_2-L_1-L_3)(L_2-L_1) - b(L_2-L_1)^2,
	\end{aligned} 
	\end{equation*}	
	hence $0 < b(L_2 - L_1) = (a + m^2) (L_2 - L_1 - L_3) < 0$, which is a contradiction.\qed
\end{proof}

\section{Proofs for \Cref{lemma:auxilliary0}, \Cref{lemma:auxilliaryA} and \Cref{lemma:auxilliaryB}}\label{proofs}
\subsection{Proof for \Cref{lemma:auxilliary0}}
We multiply \eqref{eq:ResolventEqA:3} by $\overline{u^1_{l,\, x}}$, integrate over $[L_1, L_2]$, use \eqref{eq:ResolventEqA:1}, \eqref{eq:ResolventEqA:4}, \eqref{eq:ResolventEqA:5} and employ the boundary condition for $q_l$, in order to obtain
\begin{equation}\label{eq:aux0_first}
\begin{aligned}
\int_{L_1}^{L_2}  \theta_l \overline{u^1_{l,\, x}} \, dx & = && - \int_{L_1}^{L_2} \frac{1}{il} \left[ k q_{l,\, x} \overline{u^1_{l,\, x}} + m u^2_{l,\, x} \overline{u^1_{l,\, x}} - h_l \overline{u^1_{l,\, x}}  \right] \, dx\\
& = && - \int_{L_1}^{L_2} \frac{1}{il} \left[- k q_{l} \overline{u^1_{l,\, xx}} + m (il u^1_{l,\, x} - f^1_{l,\, x})\overline{u^1_{l,\, x}} - h_l \overline{u^1_{l,\, x}}  \right] \, dx\\
& = && - \int_{L_1}^{L_2} \left[ \frac{k}{il} \frac{1}{a}(il q_l \overline{u^2_l} - m q_l \overline{\theta_{l,\, x}} + q_l\overline{f^2_l}) + m |u^1_{l,\, x}|^2 - \frac{1}{il}(h_l + m f^1_{l,\, x})\overline{u^1_{l,\, x}} \right] \, dx\\
& = && 	- \int_{L_1}^{L_2} \left[  \frac{k}{il} \frac{1}{a}\left(il q_l \overline{u^2_l} - \frac{m}{k} q_l (\overline{p_l} -(1- \tau il)\overline{q_l}) + q_l\overline{f^2_l}\right) \right. \\
& && \left. + \, m |u^1_{l,\, x}|^2 - \frac{1}{il}(h_l + m f^1_{l,\, x})\overline{u^1_{l,\, x}}  \right] \, dx.\\
\end{aligned} 
\end{equation}
Moreover, for
\[
R := - \int_{L_1}^{L_2} \left[ \frac{k}{il} \frac{1}{a}\left(il q_l \overline{u^2_l} - \frac{m}{k} q_l (\overline{p_l} -(1- \tau il)\overline{q_l}) + q_l\overline{f^2_l}\right) - \frac{1}{il}(h_l + m f^1_{l,\, x})\overline{u^1_{l,\, x}} \right] \, dx
\]
it holds that $R \to 0$ as $|l| \to +\infty$. This is true because of \Cref{remark:q_estimate} and the assumptions $\|F_l\|_{\mathcal{H}} \to 0$, as $|l| \to +\infty$ and $\|U_l\|_{\mathcal{H}} = 1$ for all $l \in \{l_n \, | \, n \in \mathbb{N} \}$, which guarantee in particular that $u^2_l, u^1_{l,\, x}$ are at least bounded in $L^2(L_1, L_2)$ independently of $l \in \{l_n \, | \, n \in \mathbb{N} \}$ and that $q_l, p_l, h_l, f^1_{l,\, x}, f^2_l \to 0$ in $L^2(L_1, L_2)$ as $|l| \to +\infty$. In order to provide more details on the claimed convergence of $R$, we plug this information into the following estimates for each term of $R$ separately and get, with the help of Hölder's inequality, as $|l| \to + \infty$,
\begin{equation*}
\begin{aligned}
	\left|\int_{L_1}^{L_2}  \frac{k}{il} \frac{1}{a} il q_l \overline{u^2_l} \, dx \right| \leq  \frac{k}{a} \underset{\to 0}{\underbrace{\|q_l\|_{L^2(L_1,L_2)}}}\underset{\mbox{\tiny bounded independently of $l$} }{\underbrace{\|u^2_l\|_{L^2(L_1,L_2)}}} \to 0,\\
	\left|\int_{L_1}^{L_2} \frac{k}{il} \frac{1}{a} \frac{m}{k} q_l \overline{p_l} \, dx \right| \leq \frac{1}{l}\frac{m}{a} \underset{\to 0}{\underbrace{\|q_l\|_{L^2(L_1,L_2)}}}\underset{\to 0}{\underbrace{\|p_l\|_{L^2(L_1,L_2)}}} \to 0,\\
	\left|\int_{L_1}^{L_2} \frac{k}{il} \frac{1}{a} \frac{m}{k} (1- \tau il) q_l\overline{q_l} \, dx \right| \leq  \underset{\mbox{\tiny bounded for $l \to + \infty$}}{\underbrace{\left( \frac{m}{a}\frac{1}{l} + \frac{m\tau}{a}\right)}} \underset{\to 0}{\underbrace{\|q_l\|^2_{L^2(L_1,L_2)}}} \to 0, \\
	\left|\int_{L_1}^{L_2} \frac{k}{il} \frac{1}{a}  q_l\overline{f^2_l} \, dx \right| \leq \frac{k}{l} \frac{1}{a} \underset{\to 0}{\underbrace{\|q_l\|_{L^2(L_1,L_2)}}}\underset{\to 0}{\underbrace{\|f^2_l\|_{L^2(L_1,L_2)}}} \to 0, \\
	\left|\int_{L_1}^{L_2} \frac{1}{il}(h_l + m f^1_{l,\, x})\overline{u^1_{l,\, x}} \, dx \right| \leq  \frac{1}{l} \underset{\mbox{\tiny bounded independently of $l$} }{\underbrace{\|u^1_{l,\, x}\|_{L^2(L_1,L_2)}}} \left( \underset{\to 0}{\underbrace{\|h_l\|_{L^2(L_1,L_2)}}} + m\underset{\to 0}{\underbrace{\|f^1_{l,\, x}\|_{L^2(L_1,L_2)}}} \right) \to 0.
\end{aligned} 
\end{equation*}
In this way we obtain $R \to 0$ as $|l| \to +\infty$ as claimed. For similar estimates below some of these details shall be omitted. Combining the result with \eqref{eq:aux0_first}, one arrives at
\begin{equation*}
\begin{aligned}
\int_{L_1}^{L_2} m |u^1_{l,\, x}|^2  \, dx = R - \int_{L_1}^{L_2} \theta_l \overline{u^1_{l,\, x}} \, dx \to 0, \quad |l| \to +\infty.
\end{aligned} 
\end{equation*}
Hence, $\|u^1_{l, x}\|_{L^2(L_1,L_2)} \to 0$ as $|l| \to +\infty$. This is true because $R \to 0$ as $|l| \to +\infty$ as seen above, $\theta_l \to 0$ in $L^2(L_1, L_2)$ as $|l| \to +\infty$ by \eqref{equation:thetabound} and $\|u^1_{l,\, x}\|_{L^2(L_1,L_2)} \leq \frac{1}{\sqrt{a}}$ for all $l \in \{l_n \, | \, n \in \mathbb{N} \}$ due to $\|U_l\|_{\mathcal{H}} = 1$. Now, through \eqref{equation:u2bound}, one has $\|u^2_l\|_{L^2(L_1,L_2)} \to 0$ as $|l| \to +\infty$. \qed

\subsection{Proof for \Cref{lemma:auxilliaryA}}
 We begin with considering the first term of the integral in \eqref{equation:Integral1}, this means we first treat
\begin{equation*}
\begin{aligned}
J_1 :=  \Re \int_{L_1}^{L_2} il (L_1 + L_2 - 2x) u^2_l \overline{(au^1_{l,\, x} - m \theta_l)} \, dx,
\end{aligned} 
\end{equation*}
and obtain by using \eqref{eq:ResolventEqA:1} and \eqref{eq:ResolventEqA:3}, 
\begin{equation*}
\begin{aligned}
J_1 & = && \Re \left\{ \int_{L_1}^{L_2} (L_1 + L_2 - 2x) \left[ a il u^2_l \overline{u^1_{l,\, x}} - m u^2_l(k \overline{q_{l,\, x}} + m \overline{u^2_{l,\, x}} - \overline{h_l}) \right] \, dx \right\} \\
& = && -  (a+m^2) \Re \int_{L_1}^{L_2} (L_1 + L_2 - 2x) u^2_l \overline{u^2_{l,\, x}}  \, dx \\
& && + \Re \int_{L_1}^{L_2} (L_1+L_2-2x) \left[ m u^2_l \overline{h_l} - a u^2_l \overline{f^1_{l,\, x}} - mk u^2_l \overline{q_{l,\, x}} \right]  \, dx.
\end{aligned} 
\end{equation*}
With the help of the prescribed boundary conditions and integration by parts it follows that
\begin{equation*}
\begin{aligned}
J_1 & = && \frac{(L_2-L_1)(a+m^2)}{2} \left[ \sum\limits_{i=1,2} |u^2_l(L_i)|^2 \right] - \int_{L_1}^{L_2} (a+m^2) |u^2_l|^2 \, dx \\
& && + \Re \int_{L_1}^{L_2} \left\{ (L_1 + L_2 - 2x) \left[ mk u^2_{l,\, x} \overline{q_l} + mu^2_l \overline{h_l} - au^2_l \overline{f^1_{l, x}} \right] - 2mku^2_l \overline{q_l} \right\} \, dx.
\end{aligned} 
\end{equation*}
Thus, with \eqref{eq:ResolventEqA:3} it follows that
\begin{equation*}
\begin{aligned}
J_1 & = &&  \frac{(L_2-L_1)(a+m^2)}{2} \left[ \sum\limits_{i=1,2} |u^2_l(L_i)|^2 \right] - \int_{L_1}^{L_2} (a+m^2) |u^2_l|^2 \, dx \\
& && +  \Re \int_{L_1}^{L_2} k (L_1 + L_2 - 2x) \left[ - il \theta_l - k q_{l,\, x} + h_l \right]\overline{q_l} \, dx \\
& && +  \Re \int_{L_1}^{L_2} \left\{ (L_1 + L_2 - 2x) \left[mu^2_l \overline{h_l} - au^2_l \overline{f^1_{l,\, x}} \right] - 2mku^2_l \overline{q_l} \right\}  \, dx. 
\end{aligned} 
\end{equation*}
By utilizing $u^2_l(L_i) = v^2_l(L_i)$, $i = 1,2$, which is due to $U_l \in D(\mathcal{A})$, and inserting 
\[
il \overline{q_l} = \frac{1}{\tau} \overline{q_l} + \frac{k}{\tau} \overline{\theta_{l,\, x}} - \overline{p_l},
\]
which is due to \eqref{eq:ResolventEqA:4}, one arrives at
\begin{equation*}
\begin{aligned}
J_1 & = &&  \frac{(L_2-L_1)(a+m^2)}{2} \left[ \sum\limits_{i=1,2} |u^2_l(L_i)|^2 \right] - \int_{L_1}^{L_2} (a+m^2) |u^2_l|^2 \, dx \\
& && - \Re \int_{L_1}^{L_2} k (L_1 + L_2 - 2x) \left[ \frac{1}{\tau} \overline{q_l} + \frac{k}{\tau} \overline{\theta_{l,\, x}} - \overline{p_l} \right]\theta_l \, dx \\
& && + \Re \int_{L_1}^{L_2} \left\{ (L_1 + L_2 - 2x) \left[mu^2_l \overline{h_l} - au^2_l \overline{f^1_{l,\, x}} - \frac{k^2}{2} \frac{d}{dx} |q_l|^2 + k h_l \overline{q_l} \right] - 2mku^2_l \overline{q_l} \right\} \, dx\\
& = &&  \frac{L_2-L_1}{2} \sum\limits_{i=1,2} \left[ (a+m^2)|v^2_l(L_i)|^2 + \frac{k^2}{\tau}|\theta_l(L_i)|^2 \right] \\
& &&  - \int_{L_1}^{L_2} \left[ (a+m^2) |u^2_l|^2 + \frac{k^2}{\tau} |\theta_l|^2 \right] \, dx + \tilde{R}_1,
\end{aligned} 
\end{equation*}
with
\begin{equation*}
\begin{aligned}
\tilde{R}_1 & := && \Re \int_{L_1}^{L_2} \left\{ (L_1 + L_2 - 2x) \left[mu^2_l \overline{h_l} - au^2_l \overline{f^1_{l,\, x}} - \frac{k^2}{2} \frac{d}{dx} |q_l|^2 + k h_l \overline{q_l} \right] - 2mku^2_l \overline{q_l} \right\} \, dx \\
& && - \Re \int_{L_1}^{L_2} k (L_1 + L_2 - 2x) \left[ \frac{1}{\tau} \overline{q_l} - \overline{p_l} \right]\theta_l \, dx.
\end{aligned} 
\end{equation*}
Because the $\frac{d}{dx} |q_l|^2$- term in the above integral can be treated with integration by parts and the facts that $q_l(L_1) = q_l(L_2) = 0$ as well as $\|q_l\|_{L^2(L_1,L_2)} \to 0$ for $|l| \to + \infty$, one can conclude with the help of $\|U_l\|_{\mathcal{H}} = 1$ for all $l \in \{l_n \, | \, n \in \mathbb{N} \}$ and $\|F_l\|_{\mathcal{H}} \to 0$ as $|l| \to + \infty$ that $\tilde{R}_1 \to 0$ as $|l| \to +\infty$. Next, we consider 
\begin{equation*}
\begin{aligned}
J_2 := - \Re \int_{L_1}^{L_2} (L_1 + L_2 - 2x)\left(a u^1_{l,\, xx} -m\theta_{l,\, x} + f^2_l\right) \overline{\left(au^1_{l,\, x} - m \theta_l\right)} \, dx,
\end{aligned} 
\end{equation*} 
for which integration by parts and employing the transmission conditions gives
\begin{equation*}
\begin{aligned}
J_2 & = && \frac{L_2 - L_1}{2} \sum\limits_{i=1,2} |bv^1_{l,\, x}(L_i)|^2 - \int_{L_1}^{L_2} |au^1_{l,\, x}-m\theta_l|^2 \, dx \, \\
& &&  \underset{:= \tilde{R}_2}{\underbrace{ - \Re \int_{L_1}^{L_2} (L_1 + L_2 - 2x)f^2_l \overline{\left(au^1_{l,\, x} - m \theta_l\right)} \, dx}},  
\end{aligned} 
\end{equation*} 
with $\tilde{R}_2 \to 0$, as $|l| \to +\infty$, similar to previously obtained convergences. Summed up, we obtain \Cref{lemma:auxilliaryA}. \qed
\subsection{Proof for \Cref{lemma:auxilliaryB}}
Multiplying \eqref{eq:ResolventEqA:6} by $x \overline{v^1_{l,\, x}(x)}$ and inserting \eqref{eq:ResolventEqA:2} results in
\begin{equation*}
\begin{aligned}
0 & = && \Re  \int_{0}^{L_1} x \left[ v^2_l \overline{v^1_{l,\, x}}  - bv^1_{l,\, xx} \overline{v^1_{l,\, x}} - g^2_l \overline{v^1_{l,\, x}} \right] \, dx \\
& = &&  - \Re \int_{0}^{L_1} x\left[\frac{1}{2}\left(\frac{d}{dx} |v^2_l|^2 \right) + \frac{b}{2} \left(\frac{d}{dx} |v^1_{l,\, x}|^2 \right) + \left(v^2_l\overline{g^1_{l,\, x}} + g^2_l \overline{v^1_{l,\, x}}\right)\right] \, dx \\
& = && -\frac{L_1}{2} |v^2_l(L_1)|^2 - \frac{bL_1}{2} |v^1_{l,\, x}(L_1)|^2 + \frac{1}{2} \int_0^{L_1} \left( |v^2_l|^2 + b|v^1_{l,\, x}|^2 \right) \, dx + \tilde{R}_3,
\end{aligned} 
\end{equation*}
where 
\[
\tilde{R}_3 := - \Re \int_0^{L_1} x\left(v^2_l\overline{g^1_{l,\, x}} + g^2_l \overline{v^1_{l,\, x}}\right) \, dx \to 0, \quad |l| \to +\infty.
\] 
The calculations for the $[L_2, L_3]$ part, where \eqref{eq:ResolventEqA:6} is multiplied by $(x - L_3) \overline{v^1_{l,\, x}(x)}$, work analogously. \qed

\section*{Acknowledgements}
	We would like to thank the anonymous referee for many constructive remarks, which helped us to improve the readability of this article. This research was partially supported by the National Natural Science Foundation of China (NNSFC) under Grant No. 11631008.

\appendix
\def\appendixname{Appendix }
\section{Uniform exponential stability in the case of Fourier's law}\label{fourier:Section:exponentialStability}
\def\appendixname{}
We demonstrate in this appendix how exponential stability can be obtained, by the same method as above, when the thermoelastic material is modeled in the classical way using Fourier's law for heat conduction. We present all steps but shall avoid repeated proofs and explanations. From now on, the heat flux $q$ is given by $q = -k \theta_x$, see e.g. \cite{RackeHandbook}, and with constants $a, b , m, k > 0$, the system \eqref{equation:TimeDomainSystem:eq1} becomes 
\begin{equation}\label{fourier:equation:TimeDomainSystem:eq1}
	\begin{aligned}
	& u_{tt} - a u_{xx} + m \theta_x  & = && 0 && \mbox{ in } && [L_1, L_2]\times\mathbb{R}_+, \\
	& \theta_t - k^2 \theta_{xx} + mu_{xt} & = && 0 && \mbox{ in } && [L_1, L_2]\times\mathbb{R}_+, \\
	& v_{tt} - b v_{xx} & = && 0 && \mbox{ in } && [0, L_1] \cup [L_2, L_3] \times\mathbb{R}_+.\\
	\end{aligned}
\end{equation}
	The only difference to \eqref{equation:TimeDomainSystem:eq2}-\eqref{equation:TimeDomainSystem:eq4} is that, instead of the boundary condition for $q$, now $\theta_x(L_1, t) = \theta_x(L_2, t) = 0$ is imposed for $t \geq 0$ and that the initial condition for $q$ is dropped.
The state space is given by
\begin{equation*}
	\begin{aligned}
	\mathcal{H}_F  & := &&   \Big\{ [u^1, v^1, \theta , u^2, v^2]'  \in  H^1(L_1, L_2) \times H^1((0, L_1) \cup (L_2, L_3)) \\
	& &&\times L^2(L_1, L_2) \times L^2(L_1, L_2) \times L^2((0,L_1) \cup (L_2,L_3)) \\
	& &&\mbox{ such that } v^1(0) = v^1(L_3) = 0 \mbox{ and }  u^1(L_i) = v^1(L_i) \mbox{ for } i= 1,2   \Big\},
	\end{aligned}
\end{equation*}
equipped, using \eqref{equation:Poincare}, with the norm $\|\cdot\|_{\mathcal{H}_F}$ induced by the inner product
\begin{equation*}
	\begin{aligned}
	\langle  U, \tilde{U} \rangle_{\mathcal{H}_F} & := && \int_{L_1}^{L_2} \left( a u^1_x \overline{\tilde{u^1_x}} + u^2 \overline{\tilde{u^2}} + \theta \overline{\tilde{\theta}} \right)  \, dx  + \left[\int_0^{L_1} + \int_{L_2}^{L_3}\right] \left( b v^1_x \overline{\tilde{v^1}_x}  + v^2 \overline{\tilde{v^2}} \right) \, dx.
	\end{aligned}
\end{equation*}
Let the linear operator $\mathcal{A}_F \colon \mathcal{H}_F \supseteq D(\mathcal{A}_F) \to \mathcal{H}_F$ be defined for $U =  [u^1, v^1, \theta, u^2, v^2]'$ by  
\[
	\mathcal{A}_FU := [u^2, v^2, k^2\theta_{xx} -mu^2_x, au^1_{xx} -m\theta_x, bv^1_{xx}]',
\]
on the domain
\begin{equation*}
	\begin{aligned}
	D(\mathcal{A}_F) & = && \Big\{ U  \in \mathcal{H}_F \cap \big[ H^2(L_1, L_2) \times H^2((0, L_1) \cup (L_2, L_3)) \times H^2(L_1, L_2) \\
	& && \times H^1(L_1, L_2) \times H^1((0, L_1) \cup (L_2, L_3)) \big]  \,\, \Big| \,\,  u^2(L_i) = v^2(L_i), \\
	& && v^2(0) = v^2(L_3) = 0, \, au^1_x(L_i) - m\theta(L_i) = bv^1_x(L_i), \, \theta_x(L_i) = 0, \, i = 1,2   \Big\},
	\end{aligned}
\end{equation*}
such that \eqref{fourier:equation:TimeDomainSystem:eq1} as well as the initial- and boundary conditions transform into the following Cauchy problem in $\mathcal{H}_F$,
\begin{equation}\label{fourier:equation:Cauchyproblem}
	\begin{aligned}
		& \frac{d}{dt}U(t) & = && & \mathcal{A}_F U(t), && t \geq 0, \\
		& U(0) & = && & U_0 \in \mathcal{H}_F. && 
	\end{aligned}
\end{equation}
As in \Cref{proposition:Semigroup}, $\mathcal{A}_F$ is densely defined and using in particular the boundary conditions prescribed via $\mathcal{H}_F$ and $D(\mathcal{A}_F)$ one can verify that $\mathcal{A}_F^*$ is given by $D(\mathcal{A}_F^*) = D(\mathcal{A}_F)$ and 
\[
\mathcal{A}_F^* [u^1, v^1, \theta, u^2, v^2]' = [-u^2, -v^2, k^2\theta_{xx} + mu^2_x, -au^1_{xx} + m\theta_x, -bv^1_{xx}]'.
\]
Repeating these calculations one finds  $(\mathcal{A}_F^*)^* = \mathcal{A}_F$, which implies that $\mathcal{A}_F$ is closed. 
Moreover, for $U \in D(\mathcal{A}_F) = D(\mathcal{A}_F^*)$ it holds that
\begin{equation}\label{fourier:equation:dissipation}
	\begin{aligned}
	\Re \langle \mathcal{A}_FU, U \rangle_{\mathcal{H}_F} = \Re \langle \mathcal{A}_F^*U, U \rangle_{\mathcal{H}_F} = - k^2 \int_{L_1}^{L_2} \theta_x \overline{\theta_x} \, dx \leq 0.
	\end{aligned}
\end{equation}
Hence, $\mathcal{A}_F$ and $\mathcal{A}_F^*$ are dissipative and, by the Lumer \& Phillips Theorem, $\mathcal{A}_F$ generates a strongly continuous contraction semigroup $(T_F(t))_{t \geq 0}$ on $\mathcal{H}_F$. Moreover, $\sigma(\mathcal{A}_F) \subseteq \{\Re \lambda \leq 0\}$ consists only of eigenvalues due to the compact embedding $D(\mathcal{A}_F) \hookrightarrow \mathcal{H}_F$. 

\begin{remark}\label{fourier:remark:q_estimate}
	For  $U \in D(\mathcal{A}_F)$, $l \in \mathbb{R}$ and $F = (il - \mathcal{A}_F)U$, we have like in \Cref{remark:q_estimate} that $k^2\|\theta_x\|_{L^2(L_1,L_2)}^2\leq \|U\|_{\mathcal{H}_F} \|F\|_{\mathcal{H}_F}$.
\end{remark}

The statement of \Cref{proposition:StationarySolutions} and its proof remain true if one replaces $q$ by $-k\theta_x$  at some points or deletes some expressions which do not appear anymore. 
\begin{proposition}\label{fourier:proposition:StationarySolutions} It is $0 \in \sigma(\mathcal{A}_F)$, $i \mathbb{R} \cap \sigma(\mathcal{A}_F) = \{0\}$ and the stationary solutions of \eqref{fourier:equation:Cauchyproblem} are characterized by $\operatorname{ker}(\mathcal{A}_F) = \operatorname{span}_{\mathcal{H}_F}\left\{ [\zeta^1, \zeta^2, \zeta^3, 0, 0]' \right\}$, where $\zeta^1, \zeta^2$ and $\zeta^3$ are the same as defined in \Cref{proposition:StationarySolutions}.
\end{proposition}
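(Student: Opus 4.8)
The plan is to transcribe the three-part proof of \Cref{proposition:StationarySolutions} into the present setting, replacing the flux $q$ throughout by $-k\theta_x$ and using the dissipation identity \eqref{fourier:equation:dissipation} in place of \eqref{equation:dissipation}. The natural starting point is the Fourier analogue of \eqref{equation:ForLocationOfSpectrum}: for an eigenvector $U = [u^1, v^1, \theta, u^2, v^2]'$ with $(\omega + il - \mathcal{A}_F)U = 0$ and $\omega, l \in \mathbb{R}$ one computes, since $il\|U\|_{\mathcal{H}_F}^2$ is purely imaginary,
\[
0 = \Re\langle(\omega + il - \mathcal{A}_F)U, U\rangle_{\mathcal{H}_F} = \omega\|U\|_{\mathcal{H}_F}^2 + k^2\int_{L_1}^{L_2}|\theta_x|^2\,dx,
\]
which at $\omega = 0$ forces $\theta_x = 0$, that is, $\theta$ constant. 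This is the exact substitute for the conclusion $q = 0$ exploited in the Cattaneo case.

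For the inclusion $\operatorname{span}_{\mathcal{H}_F}\{[\zeta^1,\zeta^2,\zeta^3,0,0]'\} \subseteq \operatorname{ker}(\mathcal{A}_F)$, and hence $0 \in \sigma(\mathcal{A}_F)$, I would verify directly that this vector lies in $D(\mathcal{A}_F)$ and is annihilated by $\mathcal{A}_F$. Because $\zeta^3$ is constant, the new boundary condition $\theta_x(L_i) = 0$ holds automatically, while the transmission identities $a\zeta^1_x(L_i) - m\zeta^3(L_i) = b\zeta^2_x(L_i)$ and the endpoint conditions are precisely those already checked in \Cref{proposition:StationarySolutions}; applying $\mathcal{A}_F$ returns zero since $\zeta^1$ is affine, $\zeta^2$ piecewise affine and $\zeta^3$ constant. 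For the reverse inclusion I would take $U \in D(\mathcal{A}_F)$ with $\mathcal{A}_F U = 0$: the component equations give $u^2 = v^2 = 0$, the identity above gives $\theta$ constant, the remaining equations give $u^1$ affine and $v^1$ piecewise affine, and the same algebraic elimination via the boundary and transmission conditions as in \Cref{proposition:StationarySolutions} pins $U$ down up to a single scalar, yielding $\operatorname{ker}(\mathcal{A}_F) = \operatorname{span}_{\mathcal{H}_F}\{[\zeta^1,\zeta^2,\zeta^3,0,0]'\}$.

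The delicate part is ruling out nonzero imaginary eigenvalues, i.e. $i\mathbb{R}\setminus\{0\} \subseteq \rho(\mathcal{A}_F)$. Supposing $\lambda = il$ with $l \neq 0$ were an eigenvalue with eigenvector $U$, the identity above again gives $\theta$ constant. The equation for $\theta$ then reads $il\theta + mu^2_x = 0$, and inserting $u^2 = ilu^1$ yields $\theta + mu^1_x = 0$, so $u^1_x$ is constant and $u^1_{xx} = 0$; feeding this together with $\theta_x = 0$ into the equation $ilu^2 - au^1_{xx} + m\theta_x = 0$ forces $u^2 = 0$, hence $u^1 = 0$ and then $\theta = 0$. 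The crux is propagating this vanishing into the purely elastic parts: with $u^1 \equiv 0$ and $\theta \equiv 0$, the transmission condition $au^1_x(L_i) - m\theta(L_i) = bv^1_x(L_i)$ degenerates to $v^1_x(L_i) = 0$, which together with $v^1(L_i) = u^1(L_i) = 0$ supplies \emph{both} Dirichlet and Neumann data at each interface point. Since $v^1$ solves $bv^1_{xx} + l^2 v^1 = 0$ on each of $[0,L_1]$ and $[L_2,L_3]$, vanishing together with its first derivative at an endpoint forces $v^1 \equiv 0$ by uniqueness for the Cauchy problem of this linear ODE. Thus $U = 0$, a contradiction, and since $\sigma(\mathcal{A}_F)$ consists only of eigenvalues we conclude $i\mathbb{R}\cap\sigma(\mathcal{A}_F) = \{0\}$. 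I expect this final propagation step — recognizing that the transmission condition collapses to an overdetermined Cauchy condition once $u^1$ and $\theta$ vanish, so that the elastic displacement cannot be a nontrivial standing wave — to be the one point requiring genuine care, the rest being a routine transcription of the Cattaneo argument.
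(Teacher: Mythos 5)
Your proposal is correct and follows essentially the same route as the paper, whose entire ``proof'' here is the remark that the argument for \Cref{proposition:StationarySolutions} carries over verbatim under the substitution $q \mapsto -k\theta_x$ (with the dissipation identity \eqref{fourier:equation:dissipation} supplying $\theta_x = 0$ where the Cattaneo proof obtained $q = 0$), which is exactly what you execute. If anything, your handling of the final step is more careful than the paper's own wording in the Cattaneo case: you correctly observe that annihilating $v^1$ requires the Neumann data $v^1_x(L_i) = 0$ coming from the force-matching transmission condition in addition to $v^1(L_i) = 0$ (Dirichlet data alone would admit resonant sine modes for special $l$), a point the paper's proof of \Cref{proposition:StationarySolutions} passes over tersely.
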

	
Because $\operatorname{ker}(\mathcal{A}_F) = \operatorname{ker}(\mathcal{A}_F^*)$, the following result can be proved like \Cref{lemma:directsum}.
\begin{lemma}\label{fourier:lemma:directsum}
		$\mathcal{H}_F = \operatorname{ker}(\mathcal{A}_F) \oplus \operatorname{range}(\mathcal{A}_F)$.
\end{lemma}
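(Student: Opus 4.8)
The plan is to transplant the proof of \Cref{lemma:directsum} to the Fourier setting, where the only genuinely new input is the identity $\operatorname{ker}(\mathcal{A}_F) = \operatorname{ker}(\mathcal{A}_F^*)$ that the preceding sentence takes for granted. I would first establish this identity. If $U = [u^1, v^1, \theta, u^2, v^2]' \in \operatorname{ker}(\mathcal{A}_F) \cup \operatorname{ker}(\mathcal{A}_F^*)$, then \eqref{fourier:equation:dissipation} (equivalently \Cref{fourier:remark:q_estimate} with $F = 0$) gives $\Re\langle \mathcal{A}_F U, U\rangle_{\mathcal{H}_F} = -k^2\|\theta_x\|_{L^2(L_1,L_2)}^2 = 0$, whence $\theta_x \equiv 0$ on $(L_1, L_2)$; thus $\theta$ is constant there and $\theta_{xx} = 0$.

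Next I would compare the explicit expressions for $\mathcal{A}_F$ and $\mathcal{A}_F^*$ recorded above. Subtracting them yields
\[
	\mathcal{A}_F^* U = -\mathcal{A}_F U + [0, 0, 2k^2\theta_{xx}, 0, 0]',
\]
so the two operators differ, on $D(\mathcal{A}_F) = D(\mathcal{A}_F^*)$, only by the term $2k^2\theta_{xx}$ in the temperature slot. On either kernel this term vanishes by the previous step, so the equations $\mathcal{A}_F U = 0$ and $\mathcal{A}_F^* U = 0$ become equivalent, giving $\operatorname{ker}(\mathcal{A}_F) = \operatorname{ker}(\mathcal{A}_F^*)$.

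With the kernels identified, I would invoke the standard Hilbert-space orthogonality relation for a densely defined closed operator, namely $\operatorname{ker}(\mathcal{A}_F^*) = \operatorname{range}(\mathcal{A}_F)^{\perp}$ (recall $\mathcal{A}_F$ is closed since $(\mathcal{A}_F^*)^* = \mathcal{A}_F$). Substituting $\operatorname{ker}(\mathcal{A}_F) = \operatorname{ker}(\mathcal{A}_F^*)$ gives $\operatorname{ker}(\mathcal{A}_F) = \operatorname{range}(\mathcal{A}_F)^{\perp}$, so the two subspaces are orthogonal and $\operatorname{ker}(\mathcal{A}_F)^{\perp} = \overline{\operatorname{range}(\mathcal{A}_F)}$. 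To upgrade the closure to the range itself, I would use that the compact embedding $D(\mathcal{A}_F) \hookrightarrow \mathcal{H}_F$ makes $\mathcal{A}_F$ have compact resolvent, so $0$ is an isolated eigenvalue of finite multiplicity by \Cref{fourier:proposition:StationarySolutions} and the range is therefore closed; this yields the orthogonal decomposition $\mathcal{H}_F = \operatorname{ker}(\mathcal{A}_F) \oplus \operatorname{range}(\mathcal{A}_F)$.

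The step that does not copy verbatim from \Cref{lemma:directsum} is the kernel identity, and here lies the \emph{main obstacle}: with Fourier's law the symmetric part $\mathcal{A}_F + \mathcal{A}_F^*$ is unbounded (it contains $\theta_{xx}$), unlike the bounded $E_{\tau}$ in the Cattaneo case, so one cannot reuse the clean bounded-perturbation identity $\mathcal{A} + E_{\tau} = -\mathcal{A}^* - E_{\tau}$. The resolution is precisely that dissipation forces $\theta$ to be constant on the kernel, which annihilates exactly this unbounded discrepancy; I would make sure this is stated cleanly. A secondary point to verify carefully is the closedness of $\operatorname{range}(\mathcal{A}_F)$, on which the statement of \Cref{lemma:directsum} likewise relies.
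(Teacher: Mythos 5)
Your proposal is correct and follows essentially the same route as the paper: the paper likewise proves $\operatorname{ker}(\mathcal{A}_F)=\operatorname{ker}(\mathcal{A}_F^*)$ by using the dissipation identity \eqref{fourier:equation:dissipation} to force $\theta_x=0$ on both kernels (so the symmetric discrepancy between $\mathcal{A}_F$ and $-\mathcal{A}_F^*$ vanishes there, exactly as in the proof of \Cref{lemma:directsum} with $E_\tau$), and then invokes $\operatorname{ker}(\mathcal{A}_F^*)=\operatorname{range}(\mathcal{A}_F)^{\perp}$. Your only addition is the explicit justification that $\operatorname{range}(\mathcal{A}_F)$ is closed via the compact resolvent, a point the paper leaves implicit in both versions of the lemma.
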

	
Since $\operatorname{ker}(\mathcal{A}_F)$ is non-trivial, the operator $\mathcal{A}_{F,0}$ is introduced as the restriction of $\mathcal{A}_F$ to $\operatornamewithlimits{range}(\mathcal{A}_F)$ with domain $D(\mathcal{A}_{F, 0}) = D(\mathcal{A}_F) \cap \operatornamewithlimits{range}(\mathcal{A}_F)$, and by \Cref{fourier:lemma:directsum} it holds that $\sigma(\mathcal{A}_{F, 0}) = \sigma(\mathcal{A}_F)\setminus\{0\}$.  
In order to apply \Cref{theorem:quotedCharacterisationExpStab}, which will lead to exponential stability for $(T_F(t))_{t \geq 0}$ in the same way as demonstrated for $(T(t))_{t \geq 0}$, we argue by contradiction and assume that \eqref{equation:desiredResBound} is not true for $\mathcal{A}_{F, 0}$. Then there exists a sequence $(l_n)_{n \in \mathbb{N}} \subseteq \mathbb{R}\setminus\{0\}$ with $|l_n| \to +\infty$,  as $n \to +\infty$ and, dropping the index $n$, a sequence $U_l \in D(\mathcal{A}_{F, 0})$ with $\|U_l\|_{\mathcal{H}_F} = 1$ for all $l \in \{l_n \, | \, n \in \mathbb{N} \}$ such that in $\mathcal{H}_F$ we have
\[
	F_l = [f^1_l, g^1_l, h_l, p_l, f^2_l, g^2_l]' := (il - \mathcal{A}_{F, 0})U_l \to 0, \quad |l| \to +\infty.
\]
Then, for $|l| \to +\infty$, it holds that
\begin{subequations}\label{fourier:equation:RessolventDecaySystem}
	\begin{align}
		& il u^1_l - u^2_l & = && f^1_l &&  \longrightarrow 0 &\quad \mbox{ in } & H^1(L_1,L_2),\label{fourier:eq:ResolventEqA:1}\\
		& il v^1_l - v^2_l & = && g^1_l && \longrightarrow 0 & \quad \mbox{ in } & H^1((0, L_1) \cup (L_2, L_3)),\label{fourier:eq:ResolventEqA:2}\\
		& il \theta_l - k^2\theta_{l,\, xx} + mu^2_{l,\, x} & = && h_l && \longrightarrow 0 &\quad \mbox{ in }&  L^2(L_1,L_2),\label{fourier:eq:ResolventEqA:3}\\
		& il u^2_l - au^1_{l,\, xx} + m \theta_{l,\, x} & = && f^2_l && \longrightarrow 0 & \quad\mbox{ in }&  L^2(L_1,L_2),\label{fourier:eq:ResolventEqA:4} \\
		& il v^2_l - bv^1_{l,\, xx} & = && g^2_l && \longrightarrow 0 & \quad\mbox{ in } & L^2((0,L_1) \cup (L_2,L_3))\label{fourier:eq:ResolventEqA:5}.
	\end{align}
\end{subequations}
For the model with Cattaneo's law, in the context of \Cref{Section:exponentialStability}, the equation \eqref{eq:ResolventEqA:4} prevented one to conclude $\theta_{l,\, x} \to 0$ in $L^2$ as $|l| \to + \infty$. However, here this convergence holds due to \Cref{fourier:remark:q_estimate}, which leads to simplifications at some points. We continue as in \Cref{Section:exponentialStability} and observe from \eqref{fourier:eq:ResolventEqA:1} and \eqref{fourier:eq:ResolventEqA:2} that $\|u^1_l\|_{L^2(L_1, L_2)} \to 0$, $\|v^1_l\|_{L^2((0,L_1) \cup (L_2,L_3))} \to 0$ as $|l| \to +\infty$, as well as from \eqref{fourier:eq:ResolventEqA:3} and \eqref{fourier:eq:ResolventEqA:4} that
\begin{equation*}
	\begin{aligned}
	|l| \|\theta_l\|_{H^{-1}(L_1, L_2)} & \leq && C (\|\theta_{l,\, x}\|_{L^2(L_1, L_2)} + \|u^2_l\|_{L^2(L_1, L_2)} + \|F_l\|_{\mathcal{H}_F} ),\\
	|l| \|u^2_l\|_{H^{-1}(L_1, L_2)} & \leq && C (\|u^1_{l,\, x}\|_{L^2(L_1, L_2)} + \|\theta_l\|_{L^2(L_1, L_2)} + \|F_l\|_{\mathcal{H}_F} ).
	\end{aligned} 
\end{equation*}
By interpolation and \Cref{fourier:remark:q_estimate}, noting that now $\|\theta_{l,\, x}\|_{L^2(L_1, L_2)} \to 0$ for $|l| \to +\infty$, it holds
\begin{equation}\label{fourier:equation:thetabound}
	\begin{aligned}
	\|\theta_l\|_{L^2(L_1, L_2)}^2 & \leq && \|\theta_l\|_{H^{-1}(L_1, L_2)}\|\theta_l\|_{H^{1}(L_1, L_2)} \\
	& \leq && \frac{1}{|l|} C (\|\theta_{l,\, x}\|_{L^2(L_1, L_2)} + \|u^2_l\|_{L^2(L_1, L_2)} + \|F_l\|_{\mathcal{H}_F}) \\
	& && \times (\|\theta_l\|_{L^2(L_1, L_2)} + \|\theta_{l,\, x} \|_{L^2(L_1, L_2)}) \\
	& \to && 0, \quad |l| \to +\infty.
	\end{aligned} 
\end{equation}
Moreover, one has
\begin{equation}\label{fourier:equation:u2bound}
	\begin{aligned}
	\|u^2_l\|_{L^2(L_1, L_2)}^2 & \leq && \|u^2_l\|_{H^{-1}(L_1, L_2)}\|u^2_l\|_{H^{1}(L_1, L_2)} \\
	& \leq && \frac{C}{|l|} (\|u^1_{l,\, x}\|_{L^2(L_1, L_2)} + \|\theta_l\|_{L^2(L_1, L_2)} + \|F_l\|_{\mathcal{H}_F}) \|u^2_l\|_{H^1(L_1, L_2)}\\
	& = && \frac{C}{|l|} (\|u^1_{l,\, x}\|_{L^2(L_1, L_2)} + \|\theta_l\|_{L^2(L_1, L_2)} + \|F_l\|_{\mathcal{H}_F}) \| il u^1_l - f_l \|_{H^1(L_1, L_2)}.
	\end{aligned} 
\end{equation}
Now, we can conclude similar to \Cref{lemma:auxilliary0} that $u^1_{l,\, x}$ and $u^2_l$ converge to zero. Hereto, by using \eqref{fourier:eq:ResolventEqA:1}, \eqref{fourier:eq:ResolventEqA:3}, \eqref{fourier:eq:ResolventEqA:4}, as well as the boundary condition for $\theta_{l,\, x}$, we obtain
		\begin{equation}\label{fourier:equation:aux_estimate1}
		\begin{aligned}
		0 & = && \int_{L_1}^{L_2} \left[ \theta_l \overline{u^1_{l,\, x}} +  \frac{1}{il} \left( k^2 \theta_{l,\, x} \overline{u^1_{l,\, xx}} + m(ilu^1_{l,\, x} - f^1_{l,\, x})\overline{u^1_{l,\, x}} -h_l \overline{u^1_{l,\, x}} \right) \right] \, dx\\
		& = && \int_{L_1}^{L_2} \left[ \theta_l \overline{u^1_{l,\, x}} + m|u^1_{l,\, x}|^2 + \frac{1}{il} \left( \frac{k^2}{a} \theta_{l,\, x} (-il\overline{u^2_l} + m \overline{\theta_{l,\, x}} - \overline{f^2_l}) - mf^1_{l,\, x}\overline{u^1_{l,\, x}} - h_l \overline{u^1_{l,\, x}} \right)\right] \, dx.\\
		\end{aligned} 
		\end{equation}
		From $\|U_l\|_{\mathcal{H}_F} = 1$, $\|F_l\|_{\mathcal{H}_F} \to 0$ as $|l| \to +\infty$, \eqref{fourier:equation:thetabound} and \Cref{fourier:remark:q_estimate}, one can deduce
		\begin{equation*}
		\begin{aligned}
		\int_{L_1}^{L_2} \left[ \theta_l \overline{u^1_{l,\, x}} + \frac{1}{il} \left( \frac{k^2}{a} \theta_{l,\, x} (-il\overline{u^2_l} + m \overline{\theta_{l,\, x}} - \overline{f^2_l}) - mf^1_{l,\, x}\overline{u^1_{l,\, x}} - h_l \overline{u^1_{l,\, x}} \right) \right] \, dx \to 0, \, |l| \to +\infty,
		\end{aligned} 
		\end{equation*}
which implies together with \eqref{fourier:equation:aux_estimate1} that $\|u^1_{l,\, x}\|_{L^2(L_1,L_2)} \to 0$ as $|l| \to +\infty$ and, by employing \Cref{fourier:equation:u2bound}, one also has $\|u^2_l\|_{L^2(L_1,L_2)} \to 0$ as $|l| \to +\infty$. 

In order to obtain controls for $\|v^1_{l,\, x}\|_{L^2((0, L_1) \cup (L_2, L_3))}$ and $\|v^2_l\|_{L^2((0, L_1) \cup (L_2, L_3))}$, we first multiply \eqref{fourier:eq:ResolventEqA:4} with $\overline{u^1_{l,\, x}}(L_2 - x)$, integrate over $(L_1, L_2)$, take real parts, incorporate \eqref{fourier:eq:ResolventEqA:1} and deduce
\begin{equation}\label{fourier:equation:boundary_estimate1}
	\begin{aligned}
	0 & = && - \int_{L_1}^{L_2} \left[ \frac{1}{2} \frac{d}{dx} |u^2_l|^2  + \frac{a}{2} \frac{d}{dx} |u^1_{l,\, x}|^2 \right](L_2 - x) \, dx  \\
	& && - \underset{=: \tilde{R}_4}{\underbrace{ \Re \int_{L_1}^{L_2} \left[ u^2_l \overline{f^1_{l\, x}} - m \theta_{l,\, x} \overline{u^1_{l,\, x}} + f^2_l\overline{u^1_{l,\, x}} \right] (L_2 - x) \, dx }}.
	\end{aligned} 
\end{equation}
Repeating this calculation, but multiplying \eqref{fourier:eq:ResolventEqA:4} now with $\overline{u^1_{l,\, x}}(L_1 - x)$, results in
\begin{equation}\label{fourier:equation:boundary_estimate2}
	\begin{aligned}
	0 & = && -\int_{L_1}^{L_2} \left[ \frac{1}{2} \frac{d}{dx} |u^2_l|^2 + \frac{a}{2} \frac{d}{dx} |u^1_{l,\, x}|^2 \right] (L_1 - x) \, dx  \\
	& && - \underset{=: \tilde{R}_5}{\underbrace{ \Re \int_{L_1}^{L_2} \left[ u^2_l \overline{f^1_{l\, x}} - m \theta_{l,\, x} \overline{u^1_{l,\, x}} + f^2_l\overline{u^1_{l,\, x}} \right] (L_1 - x) \, dx }},
	\end{aligned} 
\end{equation}
and summing \eqref{fourier:equation:boundary_estimate1} and \eqref{fourier:equation:boundary_estimate2}, as well as using integration by parts, leads to
\begin{equation}\label{fourier:equation:boundary_estimate3}
	\begin{aligned}
	\int_{L_1}^{L_2} \left[ |u^2_l|^2 + a  |u^1_{l,\, x}|^2 \right] \, dx + \tilde{R}_4 + \tilde{R}_5 & = &&  \frac{L_2-L_1}{2} \sum\limits_{i=1,2} \left[ |u^2_l(L_i)|^2 + a|u^1_{l,\, x}(L_i)|^2 \right].
	\end{aligned} 
\end{equation}
Both $\tilde{R}_4$ and $\tilde{R}_5$ converge to zero for $|l| \to +\infty$ by the usual argument. In a similar manner as before, by multiplying \eqref{fourier:eq:ResolventEqA:5} with $\overline{v^1_{l,\, x}}x$ and integrating over $(0, L_1)$, followed by multiplying \eqref{fourier:eq:ResolventEqA:5} with $\overline{v^1_{l,\, x}}(L_3-x)$ and integrating over $(L_2, L_3)$, one obtains
\begin{equation}\label{fourier:equation:boundary_estimate4}
	\begin{aligned}
	0 & = && \frac{b}{2} \|v^1_{l,\, x}\|_{L^2((0, L_1) \cup (L_2, L_3))}^2 + \frac{1}{2}\|v^2\|_{L^2((0, L_1) \cup (L_2, L_3))}^2   \\
	& && - \frac{1}{2} L_1 [|v^2_l(L_1)|^2 + b|v^1_{l,\, x}(L_1)|^2] - \frac{1}{2} (L_3 - L_2) [|v^2_l(L_2)|^2 + b|v^1_{l,\, x}(L_2)|^2]\\
	& && + \underset{=: \tilde{R}_6}{\underbrace{\Re \int_{L_2}^{L_3} \left[v^2_l\overline{g^1_{l,\, x}} + v^2_l\overline{g^1_{l,\, x}} \right](L_3 - x) \, dx - \Re \int_0^{L_1}  \left[ v^2_l\overline{g^1_{l,\, x}} + g^2_l\overline{v^1_{l,\, x}} \right]x \, dx}}.
	\end{aligned} 
\end{equation}
Note that, due to the usual Sobolev embedding, $|\theta_l(x)| \leq C \|\theta_l\|_{H^1(L_1, L_2)}$ for all $x \in [L_1, L_2]$, where $C > 0$ is independent of $l \in \{l_n \, | \, n \in \mathbb{N} \}$. Therefore, there exist $C_1, C_2 > 0$, independent  of $l$, such that for
\begin{equation*}
	\begin{aligned}
	I := \|v^1_{l,\, x}\|_{L^2((0, L_1) \cup (L_2, L_3))}^2 + \|v^2\|_{L^2((0, L_1) \cup (L_2, L_3))}^2 ,
	\end{aligned} 
\end{equation*}
one can infer from \eqref{fourier:equation:boundary_estimate4} and the transmission conditions that
\begin{equation}\label{fourier:equation:boundary_estimate5}
	\begin{aligned}
	I  & \leq && C_1 \sum\limits_{i=1,2} \left[ |u^2_l(L_i)|^2 + |u^1_{l,\, x}(L_i)|^2 + |\theta_l(L_i)|^2 + |u^1_{l,\, x}(L_i)||\theta_l(L_i)| \right] + C_1|\tilde{R}_6| \\
	& \leq && C_2 \sum\limits_{i=1,2} \left[ |u^2_l(L_i)|^2 + |u^1_{l,\, x}(L_i)|^2 + \|\theta_l\|_{H^1(L_1, L_2)}^2 + |u^1_{l,\, x}(L_i)|\|\theta_l\|_{H^1(L_1, L_2)} \right] +  C_1|\tilde{R}_6|,
	\end{aligned} 
\end{equation}
where $\tilde{R}_6$ vanishes as $|l| \to +\infty$. Furthermore, \eqref{fourier:equation:boundary_estimate3} allows to infer $|u^2_l(L_i)| + |u^1_{l,\, x}(L_i)| \to 0$, as $|l| \to +\infty$, for $i=1,2$ and \Cref{fourier:remark:q_estimate} together with \eqref{fourier:equation:thetabound} implies $\| \theta_l \|_{H^1(L_1, L_2)} \to 0$, as $|l| \to +\infty$. As a result, from \eqref{fourier:equation:boundary_estimate5} it follows that $I \to 0$, as $|l| \to +\infty$. In total, all terms contained in $\|U_l\|_{\mathcal{H}_F}$ are seen to converge to zero for large $|l|$ and we arrive at
\[
	\|U_l\|_{\mathcal{H}_F} \to 0, \quad |l| \to +\infty,
\]
which is in contradiction with $\|U_l\|_{\mathcal{H}_F} = 1$. 
	
Above we verified the requirements of \Cref{theorem:quotedCharacterisationExpStab} for $\mathcal{A}_{F,0}$ and therefore conclude the following exponential stability result, which can be proved in the same way as \Cref{theorem:MainResult}.
\begin{theorem}\label{fourier:theorem:MainResult}
		Let $U_0 \in \mathcal{H}_F$, according to \Cref{fourier:lemma:directsum} be uniquely decomposed as $U_0 = W_0 + V_0$, with $W_0 \in \operatorname{ker}(\mathcal{A}_F)$ and $V_0 \in \operatorname{range}(\mathcal{A}_F)$. There exist constants $M > 0$ and $\omega > 0$, which are independent of $U_0$, such that
		\[
		\| T_F(t) U_0 - W_0\|_{\mathcal{H}_F} \leq M \operatorname{e}^{-\omega t} \| U_0 \|_{\mathcal{H}_F} \quad (t \geq 0).
		\]
\end{theorem}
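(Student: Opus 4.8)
The plan is to follow verbatim the strategy already used for \Cref{theorem:MainResult}, since the genuine analytic work has just been completed in the contradiction argument preceding the statement. That argument verifies, for the restricted operator $\mathcal{A}_{F,0}$, the two hypotheses of \Cref{theorem:quotedCharacterisationExpStab}: condition \eqref{equation:desired}, i.e. $i\mathbb{R} \subseteq \rho(\mathcal{A}_{F,0})$, follows from \Cref{fourier:proposition:StationarySolutions} together with $\sigma(\mathcal{A}_{F,0}) = \sigma(\mathcal{A}_F)\setminus\{0\}$, while the uniform resolvent bound \eqref{equation:desiredResBound} is exactly what the sequence argument established. First I would invoke \Cref{theorem:quotedCharacterisationExpStab} to conclude that the contraction semigroup $(T_{F,0}(t))_{t \geq 0}$ generated by $\mathcal{A}_{F,0}$ is uniformly exponentially stable, so that there exist $M, \omega > 0$ with $\|T_{F,0}(t)\|_{L(\operatorname{range}(\mathcal{A}_F))} \leq M \operatorname{e}^{-\omega t}$ for all $t \geq 0$.

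Next I would identify $(T_{F,0}(t))_{t \geq 0}$ as the restriction of $(T_F(t))_{t \geq 0}$ to the invariant subspace $\operatorname{range}(\mathcal{A}_F)$, just as in the proof of \Cref{theorem:MainResult}. Concretely, $\operatorname{range}(\mathcal{A}_F)$ is invariant under $(T_F(t))_{t \geq 0}$ and $\mathcal{A}_{F,0}$ is the part of $\mathcal{A}_F$ in this subspace in the sense of \cite[Page 60]{EngelNagel}; the accompanying proposition there, combined with \cite[Page 43, 5.12]{EngelNagel}, shows that $\mathcal{A}_{F,0}$ generates precisely this restricted semigroup. Hence $T_F(t)V_0 = T_{F,0}(t)V_0$ whenever $V_0 \in \operatorname{range}(\mathcal{A}_F)$.

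Then I would assemble the estimate. Decomposing $U_0 = W_0 + V_0$ along \Cref{fourier:lemma:directsum} and applying the triangle inequality gives
\begin{equation*}
\|T_F(t)U_0 - W_0\|_{\mathcal{H}_F} \leq \|T_F(t)W_0 - W_0\|_{\mathcal{H}_F} + \|T_F(t)V_0\|_{\mathcal{H}_F}.
\end{equation*}
Because $W_0 \in \operatorname{ker}(\mathcal{A}_F)$, the semigroup commutes with its generator, so $\frac{d}{dt}T_F(t)W_0 = T_F(t)\mathcal{A}_F W_0 = 0$, whence $T_F(t)W_0 = W_0$ and the first term vanishes. The second term equals $\|T_{F,0}(t)V_0\|_{\mathcal{H}_F} \leq M \operatorname{e}^{-\omega t}\|V_0\|_{\mathcal{H}_F}$. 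Finally, the orthogonality of the splitting in \Cref{fourier:lemma:directsum} together with the Pythagorean identity yields $\|V_0\|_{\mathcal{H}_F}^2 = \|U_0\|_{\mathcal{H}_F}^2 - \|W_0\|_{\mathcal{H}_F}^2 \leq \|U_0\|_{\mathcal{H}_F}^2$, so the claimed bound holds with the same $M, \omega$, which are independent of $U_0$.

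The main obstacle has already been surmounted before the statement, namely the uniform resolvent estimate \eqref{equation:desiredResBound}, whose heart is the transfer of dissipation from the thermoelastic middle part to the undamped elastic ends via the multiplier identities \eqref{fourier:equation:boundary_estimate3}--\eqref{fourier:equation:boundary_estimate5}. What remains is purely semigroup-theoretic bookkeeping, identical to the Cattaneo case; the only point requiring a little care is the identification of the restricted generator on $\operatorname{range}(\mathcal{A}_F)$, which is handled abstractly by the theory of parts of operators on invariant subspaces.
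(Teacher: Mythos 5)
Your proposal is correct and follows essentially the same route as the paper: the paper itself proves \Cref{fourier:theorem:MainResult} by declaring it ``can be proved in the same way as \Cref{theorem:MainResult}'', namely by applying \Cref{theorem:quotedCharacterisationExpStab} to $\mathcal{A}_{F,0}$, identifying the restricted semigroup on $\operatorname{range}(\mathcal{A}_F)$ via the part of the generator on an invariant subspace, using $T_F(t)W_0 = W_0$ for $W_0 \in \operatorname{ker}(\mathcal{A}_F)$, and finishing with the Pythagorean identity from the orthogonal splitting in \Cref{fourier:lemma:directsum}. Your bookkeeping of these steps matches the paper's argument exactly.
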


\bibliographystyle{plain}
\bibliography{references}

\begin{thebibliography}{10}

\bibitem{Alves}
Margareth~S. Alves, Pedro Gamboa, Ganesh~C. Gorain, Amelie Rambaud, and Octavio
  Vera.
\newblock Asymptotic behavior of a flexible structure with {C}attaneo type of
  thermal effect.
\newblock {\em Indag. Math. (N.S.)}, 27(3):821--834, 2016.

\bibitem{Alves2}
Margareth~S. Alves, Jaime Mu\~{n}oz Rivera, Mauricio Sep\'{u}lveda, Octavio
  Vera~Villagr\'{a}n, and Mar\'{\i}a~Zegarra Garay.
\newblock The asymptotic behavior of the linear transmission problem in
  viscoelasticity.
\newblock {\em Math. Nachr.}, 287(5-6):483--497, 2014.

\bibitem{batty1}
Charles Batty, Lassi Paunonen, and David Seifert.
\newblock Optimal energy decay in a one-dimensional coupled wave-heat system.
\newblock {\em J. Evol. Equ.}, 16(3):649--664, 2016.

\bibitem{batty2}
Charles Batty, Lassi Paunonen, and David Seifert.
\newblock Optimal energy decay for the wave-heat system on a rectangular
  domain.
\newblock {\em SIAM J. Math. Anal.}, 51(2):808--819, 2019.

\bibitem{EngelNagel}
Klaus-Jochen Engel and Rainer Nagel.
\newblock {\em One-Parameter Semigroups for Linear Evolution Equations}, volume
  194 of {\em Graduate Texts in Mathematics}.
\newblock Springer-Verlag, New York, 2000.
\newblock With contributions by S. Brendle, M. Campiti, T. Hahn, G. Metafune,
  G. Nickel, D. Pallara, C. Perazzoli, A. Rhandi, S. Romanelli and R.
  Schnaubelt.

\bibitem{Fatori}
Luci~Harue Fatori, Edson Lueders, and Jaime~E. Mu\~{n}oz Rivera.
\newblock Transmission problem for hyperbolic thermoelastic systems.
\newblock {\em J. Thermal Stresses}, 26(7):739--763, 2003.

\bibitem{HanXu}
Zhong-Jie Han and Gen-Qi Xu.
\newblock Spectrum and stability analysis for a transmission problem in
  thermoelasticity with a concentrated mass.
\newblock {\em Z. Angew. Math. Phys.}, 66(4):1717--1736, 2015.

\bibitem{Huang}
Fa~Lun Huang.
\newblock Characteristic conditions for exponential stability of linear
  dynamical systems in {H}ilbert spaces.
\newblock {\em Ann. Differential Equations}, 1(1):43--56, 1985.

\bibitem{LionsMagenes1}
J.-L. Lions and E.~Magenes.
\newblock {\em Non-Homogeneous Boundary Value Problems and Applications. {V}ol.
  {I}}.
\newblock Springer-Verlag, New York-Heidelberg, 1972.
\newblock Translated from the French by P. Kenneth, Die Grundlehren der
  mathematischen Wissenschaften, Band 181.

\bibitem{LiuChen}
Wenjun Liu, Danhua Wang, and Dongqin Chen.
\newblock General decay of solution for a transmission problem in infinite
  memory-type thermoelasticity with second sound.
\newblock {\em Journal of Thermal Stresses}, 41(6):758--775, 2018.

\bibitem{LiuZheng}
Zhuangyi Liu and Songmu Zheng.
\newblock {\em Semigroups Associated with Dissipative Systems}, volume 398 of
  {\em Chapman \& Hall/CRC Research Notes in Mathematics}.
\newblock Chapman \& Hall/CRC, Boca Raton, FL, 1999.

\bibitem{Rivera1D_threeparts}
Alfredo Marzocchi, Jaime~E. Mu\~{n}oz Rivera, and Maria~Grazia Naso.
\newblock Asymptotic behaviour and exponential stability for a transmission
  problem in thermoelasticity.
\newblock {\em Math. Methods Appl. Sci.}, 25(11):955--980, 2002.

\bibitem{WangMeng}
Yi-Ping Meng and Ya-Guang Wang.
\newblock Stability for a nonlinear coupled system of elasticity and
  thermoelasticity with second sound.
\newblock {\em Anal. Appl. (Singap.)}, 13(1):45--75, 2015.

\bibitem{RiveraPortillo}
Jaime~E. Mu\~{n}oz Rivera and Higidio Portillo~Oquendo.
\newblock A transmission problem for thermoelastic plates.
\newblock {\em Quart. Appl. Math.}, 62(2):273--293, 2004.

\bibitem{rackeVisco}
Jaime~E. Mu\~{n}oz Rivera and Reinhard Racke.
\newblock Transmission problems in (thermo)viscoelasticity with
  {K}elvin-{V}oigt damping: nonexponential, strong, and polynomial stability.
\newblock {\em SIAM J. Math. Anal.}, 49(5):3741--3765, 2017.

\bibitem{Ng_Seifert}
Abraham C.~S. Ng and David Seifert.
\newblock Optimal energy decay in a one-dimensional wave-heat system with
  infinite heat part.
\newblock {\em J. Math. Anal. Appl.}, 482(2):123563, 15, 2020.

\bibitem{Pruess}
Jan Pr\"{u}ss.
\newblock On the spectrum of {$C_{0}$}-semigroups.
\newblock {\em Trans. Amer. Math. Soc.}, 284(2):847--857, 1984.

\bibitem{RackeThermoelasticitySecondSound}
Reinhard Racke.
\newblock Thermoelasticity with second sound---exponential stability in linear
  and non-linear 1-d.
\newblock {\em Math. Methods Appl. Sci.}, 25(5):409--441, 2002.

\bibitem{RackeHandbook}
Reinhard Racke.
\newblock Thermoelasticity.
\newblock In {\em Handbook of Differential Equations: Evolutionary Equations.
  {V}ol. {V}}, Handb. Differ. Equ., pages 315--420. Elsevier/North-Holland,
  Amsterdam, 2009.

\bibitem{SchnaubeltRzepnicki}
{\L}ukasz Rzepnicki and Roland Schnaubelt.
\newblock Polynomial stability for a system of coupled strings.
\newblock {\em Bull. Lond. Math. Soc.}, 50(6):1117--1136, 2018.

\bibitem{RackeSareRivera}
Hugo D~Fern{\'{a}}ndez Sare, Jaime E~Mu{\~{n}}oz Rivera, and Reinhard Racke.
\newblock {Stability for a Transmission Problem in Thermoelasticity with Second
  Sound}.
\newblock {\em Journal of Thermal Stresses}, 31(12):1170--1189, 2008.

\bibitem{waterwaves}
Pei Su, Marius Tucsnak, and George Weiss.
\newblock Stabilizability properties of a linearized water waves system.
\newblock {\em Systems Control Lett.}, 139:104672, 10, 2020.

\bibitem{Bravo}
Juan~C. Vila~Bravo and Jaime~E. Mu\~{n}oz Rivera.
\newblock The transmission problem to thermoelastic plate of hyperbolic type.
\newblock {\em IMA J. Appl. Math.}, 74(6):950--962, 2009.

\bibitem{ZhangZuazua}
Xu~Zhang and Enrique Zuazua.
\newblock Polynomial decay and control of a {$1-d$} hyperbolic-parabolic
  coupled system.
\newblock {\em J. Differential Equations}, 204(2):380--438, 2004.

\bibitem{zuazua}
Xu~Zhang and Enrique Zuazua.
\newblock Long-time behavior of a coupled heat-wave system arising in
  fluid-structure interaction.
\newblock {\em Arch. Ration. Mech. Anal.}, 184(1):49--120, 2007.

\end{thebibliography}

\end{document}